\documentclass[12pt, reqno]{amsart}
\usepackage[utf8]{inputenc}
\usepackage{amsmath,color}
\usepackage{amsfonts}
\usepackage{amssymb}
\usepackage{amsthm}
\oddsidemargin = -0in \evensidemargin = 0in \textwidth =6.5in
\textheight=9in \topmargin=0in

\newcommand{\eps}{\varepsilon}

\newtheorem{theorem}{Theorem}[section]
\newtheorem{proposition}[theorem]{Proposition}
\newtheorem{lemma}[theorem]{Lemma}

\theoremstyle{definition}

\numberwithin{equation}{section}

\title{Analytic ranks of automorphic $L$-functions and Landau-Siegel zeros}
\author{Hung M. Bui, Kyle Pratt and Alexandru Zaharescu}
\address{Department of Mathematics, University of Manchester, Manchester M13 9PL, UK}
\email{hung.bui@manchester.ac.uk}
\address{All Souls College, Oxford OX1 4AL, UK}
\email{kyle.pratt@maths.ox.ac.uk, kvpratt@gmail.com}
\address{Department of Mathematics, University of Illinois at Urbana-Champaign, 1409 West Green Street, Urbana, IL 61801, USA
and Simion Stoilow Institute of Mathematics of the Romanian Academy, P.O. Box 1-764, RO-014700 Bucharest,
Romania}
\email{zaharesc@illinois.edu}

\allowdisplaybreaks

\subjclass[2010]{11F41, 11F66, 11M20, 11F11 \\ \indent \textit{Keywords and phrases}: Landau-Siegel zeros, exceptional characters, automorphic $L$-functions, analytic ranks, ranks of Jacobians, Birch and Swinnerton-Dyer conjecture, nonvanishing, mollfier}

\begin{document}

\maketitle

\begin{abstract}
We relate the study of Landau-Siegel zeros to the ranks of Jacobians $J_0(q)$ of modular curves for large primes $q$. By a conjecture of Brumer-Murty, the rank should be equal to half of the dimension. Equivalently, almost all newforms of weight two and level $q$ have analytic rank $\leq 1$. We show that either Landau-Siegel zeros do not exist, or that almost all such newforms have analytic rank $\leq 2$. In particular, almost all odd newforms have analytic rank equal to one. Additionally, for a sparse set of primes $q$ we show the rank of $J_0(q)$ is asymptotically equal to the rank predicted by the Brumer-Murty conjecture.
\end{abstract}

\tableofcontents

\section{Introduction}

Given a newform $f$ of weight two and prime level $q$ (i.e. $f \in S_2^*(q)$), we denote its corresponding $L$-function by $L(f,s)$. We are interested in these $L$-functions because, among other reasons, the $L$-function of the Jacobian of the curve $X_0(q)$ is given by a product over $L$-functions $L(f,s)$ (see [\textbf{\ref{Shim}}]),
\begin{equation}\label{Shimura}
L(J_0(q),s)=\prod_{f\in S_2^*(q)}L(f,s).
\end{equation}
A generalization of the Birch and Swinnerton-Dyer conjecture then relates the order of vanishing of $L(f,s)$ at $s = 1/2$ to the algebraic structure of the Jacobian of $X_0(q)$. We define the \emph{analytic rank} $r_f$ to be the order of vanishing of $L(f,s)$ at $s = 1/2$.

The $L$-function $L(f,s)$ is self-dual, and therefore its root number $\varepsilon_f$ satisfies $\varepsilon_f \in \{\pm 1\}$. If the root number is $+1$ then we say that $f$ or $L(f,s)$ is \emph{even}, and if the root number is $-1$ then we say that $f$ or $L(f,s)$ is \emph{odd}. Accordingly, we may speak of the parity of $f$ or of $L(f,s)$.

The parity of $f$ has an effect on $r_f$. Specifically, if $f$ is even then $r_f$ is even, and if $f$ is odd then $r_f$ is odd. As $r_f$ is a nonnegative integer, we observe that if $r_f$ is odd then $r_f \neq 0$. 
As $q \rightarrow \infty$ asymptotically half of the elements of $S_2^*(q)$ are even and the other half are odd. Brumer [\textbf{\ref{Br}}] and Murty [\textbf{\ref{Mur}}] conjectured that almost all even forms $f$ have $r_f=0$, and that almost all odd forms $f$ have $r_f=1$. Thus, we expect that $50\%$ of the forms have analytic rank zero, $50\%$ of the forms have analytic rank one, and $0\%$ of the forms have analytic rank two or more.

This conjecture, if true, has important consequences. For example, Iwaniec and Sarnak related the proportion of forms with analytic rank zero to the Landau-Siegel zero problem [\textbf{\ref{IwaSar}}]. In particular, they showed that if one could show strictly more than $50\%$ of the \emph{even} forms were nonzero and the central values are not too small (this is an important condition), then one could rule out the existence of Landau-Siegel zeros. We expect almost all even forms to have analytic rank zero and to have central values that are not too small [\textbf{\ref{KeatSna}}], but there seems to be a barrier to going past $50\%$.

In some sense, we approach things from another, or converse, direction. Assuming the \emph{existence} of Landau-Siegel zeros, what can one deduce about vanishing or nonvanishing of central values of automorphic $L$-functions? Roughly speaking, under such a hypothesis, we can confirm the ``odd part'' of the Brumer-Murty conjecture, and we can establish the ``even part'' of the conjecture up to a factor of two.

\begin{theorem}\label{Maintheorem}
Let $D$ be large and let $\psi$ be a real, odd, primitive Dirichlet character modulo $D$. Then one of the following two possibilities must hold:

\noindent \emph{(A)} $L(1,\psi) \geq (\log D)^{-50}$, or 

\noindent \emph{(B)} for every fixed $C \geq 750$ and every prime $q$ satisfying $D^{750} \leq q \leq D^{C}$ we have
\begin{align*}
\textup{rank}(J_0(q)) = \bigg(\frac{1}{2} + O\bigg(\sqrt{\frac{\log\log\log q}{\log\log q}} \bigg)\bigg) \textup{dim}(J_0(q))
\end{align*}
if $\psi(q) = 1$, and
\begin{align*}
\bigg(\frac{1}{2} + O\Big(\frac{\log\log\log q}{\log\log q} \Big)\bigg) \textup{dim}(J_0(q))\leq \textup{rank}(J_0(q)) \leq \bigg(1 + O\bigg(\sqrt{\frac{\log\log\log q}{\log\log q}} \bigg)\bigg) \textup{dim}(J_0(q))
\end{align*}
if $\psi(q) = -1$.
\end{theorem}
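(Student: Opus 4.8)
The plan is to translate the statement about $\operatorname{rank}(J_0(q))$ into a statement about analytic ranks of the newforms $f \in S_2^*(q)$ via the factorization \eqref{Shimura}, and then to control those analytic ranks under the assumption that possibility (A) fails, i.e. that $L(1,\psi)$ is very small (a Landau--Siegel zero exists). Writing $r(q) = \sum_{f \in S_2^*(q)} r_f$ for the total analytic rank, the generalized Birch--Swinnerton-Dyer philosophy (and, crucially, the actual inequality $\operatorname{rank}(J_0(q)) \le r(q)$ plus the Brumer--Murty-type lower bound coming from parity) reduces everything to showing: if $L(1,\psi)$ is small, then $r(q) = (\tfrac12 + o(1))\dim J_0(q)$ when $\psi(q) = 1$, and $(\tfrac12 + o(1))\dim J_0(q) \le r(q) \le (1 + o(1))\dim J_0(q)$ when $\psi(q) = -1$. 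The parity argument already gives the lower bound $\tfrac12 \dim J_0(q) (1 + o(1))$ unconditionally (half the forms are odd, hence have $r_f \ge 1$), so the real content is the \emph{upper} bound on $r(q)$: essentially all even forms must have $r_f = 0$ when $\psi(q)=1$, and almost all forms must have $r_f \le 1$ (equivalently, almost all even forms have $r_f = 0$ and almost all odd forms have $r_f = 1$, up to the stated error terms) in both cases, with the weaker $r_f \le 2$ bound absorbed into the $\psi(q) = -1$ upper estimate.

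The key analytic input is a twisted first-moment (and mollified second-moment) computation for the central values $L(f, 1/2)$, or rather for $L(f \otimes \psi, 1/2)$, over the family $f \in S_2^*(q)$. The mechanism is the one pioneered in work on Landau--Siegel zeros: the hypothetical exceptional character $\psi$ mod $D$ behaves, on the relevant range of moduli, almost like the constant function, which creates an abnormally large main term (or, dually, a strong positivity) in the relevant moment, forcing $L(f, 1/2) \ne 0$ for almost all $f$. Concretely, I would (i) set up an approximate functional equation for $L(f,1/2)$ with a mollifier $M(f) = \sum_{\ell \le q^\theta} x_\ell \lambda_f(\ell)$; (ii) compute $\sum_{f} \omega_f L(f,1/2) M(f)$ and $\sum_f \omega_f |L(f,1/2) M(f)|^2$ using the Petersson trace formula, tracking the diagonal and the Kloosterman/Bessel off-diagonal terms; (iii) observe that the constraint $D^{750} \le q \le D^C$ is exactly what is needed so that the weight $L(1,\psi)^{-1}$-sized gains overwhelm the error terms — the exponent $750$ and the $(\log D)^{-50}$ threshold are chosen so that powers of $L(1,\psi)$ beat powers of $\log q$; (iv) conclude by Cauchy--Schwarz that the number of $f$ with $L(f,1/2) \ne 0$ is $(1 - o(1))|S_2^*(q)|$ with the explicit $\sqrt{\log\log\log q / \log\log q}$-type error, and similarly bound the proportion with $r_f \ge 2$ (respectively $r_f \ge 3$) by a small quantity using a derivative-aware version of the same moment, yielding $r(q) \le (1 + o(1))\dim J_0(q)$.

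The subtle points, and where most of the work lies, are: first, propagating the smallness of $L(1,\psi)$ through the Petersson formula — one expands $\lambda_f(n)$ against $\psi$-twisted divisor-type coefficients, and the main term acquires a factor that is large precisely because $L(1,\psi)$ is small, but this requires carefully isolating the "principal" contribution (the analogue of the pole that $\zeta(s)$ would contribute) from genuine error, and handling the archimedean Bessel factors in weight two; second, the dependence on whether $\psi(q) = \pm 1$, which affects the root number of $L(f \otimes \psi, s)$ relative to that of $L(f,s)$ and hence which parity class gets the nonvanishing (this is why the $\psi(q) = 1$ case gives the sharper $\sqrt{\cdot}$ error on both sides while $\psi(q) = -1$ only gives the one-sided sharp bound); and third, choosing the mollifier length $\theta$ and the smoothing so that the off-diagonal terms are genuinely negligible on the range $q \le D^C$ — the constant $C$ enters here, and one must check the estimates are uniform for all such $q$. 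I expect the main obstacle to be step (ii)–(iii): getting the off-diagonal and dual-term error estimates to be power-savingly small \emph{after} the large factor $L(1,\psi)^{-1}$ is pulled out, which is what pins down the admissible exponents $750$ and $50$.
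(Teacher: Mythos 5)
Your analytic core (mollified first and second moments over $S_2^*(q)$ computed with the Petersson formula, with the smallness of $L(1,\psi)$ exploited through lacunarity and the sign dichotomy governed by $\psi(q)=\pm1$, followed by Cauchy--Schwarz) is indeed the engine of the paper's Theorem \ref{Maintheorem2}, though in the paper it is run on derivatives of the completed product $\Lambda(f,s)\Lambda(f\otimes\psi,s)$, whose sign is $-\psi(q)$, and it also requires removing the harmonic weights via a subconvexity input, which you do not mention; note also that the mechanism is not an ``abnormally large main term'' forcing nonvanishing, but rather that lacunarity makes the error terms negligible so that the square of the first moment matches the second moment, which is how the $50\%$ barrier is passed. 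The genuine gaps are in your reduction of the rank statement to analytic ranks. Your claim that the lower bound ``comes from parity'' and is ``unconditional'' is wrong for the Mordell--Weil rank: odd root numbers force $r_f\geq 1$, which lower-bounds only the \emph{analytic} rank. To produce rational points one needs Gross--Zagier (together with Kolyvagin--Logachev), which applies to the quotient of $J_0(q)$ attached to forms with $r_f$ \emph{exactly} one and gives rank equal to dimension; and to know that almost all odd forms have $r_f=1$ rather than $r_f\geq 3$ one already needs the nonvanishing theorem, i.e.\ the failure of alternative (A). So the lower bound is neither unconditional nor a parity statement, and this arithmetic input is absent from your argument; similarly, the inequality $\mathrm{rank}(J_0(q))\leq r(q)$ is not a free ``actual inequality'' but rests on the same theorems for forms of analytic rank at most one, with the remaining forms requiring separate treatment.

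Second, even after passing to analytic ranks, bounding the \emph{proportion} of forms with $r_f\geq 3$ (which is all your moment argument yields) does not bound their total contribution $\sum_{r_f\geq 3}r_f$, which is what the upper bound on the rank requires: a sparse set of forms could a priori carry a huge amount of rank. The paper closes this by Cauchy--Schwarz combined with the unconditional second-moment bound for analytic ranks, $\sum_f r_f^2\ll q$ (Theorem 8.1 of [\textbf{\ref{KMV1}}]), giving $\frac{1}{|S_2^*(q)|}\sum_{r_f\geq 3}r_f\ll q^{-1/2}\bigl(\#\{f:r_f\geq3\}\bigr)^{1/2}\ll\sqrt{\log\log\log q/\log\log q}$; this step is exactly where the square-root shape of the error in the theorem comes from, and your sketch neither supplies it nor explains that shape. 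Without the Gross--Zagier/Kolyvagin--Logachev input and without this rank-second-moment step, the deduction of the rank asymptotics from the nonvanishing statement does not go through.
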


This theorem surpasses what is known even under the Generalized Riemann Hypothesis [\textbf{\ref{ILS}}]. Kowalski, Michel and VanderKam [\textbf{\ref{KMV}}] showed unconditionally that $$\text{rank}(J_0(q)) \leq (c + o(1)) \text{dim}( J_0(q)),$$ where $c=1.1891$. There appears to be a significant barrier to proving such a result with $c<1$.

Theorem \ref{Maintheorem} is a direct consequence of the following theorem.

\begin{theorem}\label{Maintheorem2}
Let $C \geq 750$ be a fixed real number. Let $D$ be large and let $\psi$ be a real, odd, primitive Dirichlet character modulo $D$. Then for any $\varepsilon > 0$ and any prime $q$ satisfying
\begin{align*}
D^{750} \leq q \leq D^{C}
\end{align*}
we have 
\begin{align*}
\frac{1}{|S_2^*(q)|}\sum_{\substack{f \in S_2^*(q) \\ r_f \leq 1}}1 = 1 + O_\varepsilon( L(1,\psi)(\log q)^{45+\varepsilon}) + O_\varepsilon( L(1,\psi)^2(\log q)^{56+\varepsilon}) + O_A((\log q)^{-A})
\end{align*}
if $\psi(q) = 1$, and
\begin{align*}
\frac{1}{|S_2^*(q)|}\sum_{\substack{f \in S_2^*(q) \\ r_f \leq 2}}1 &= 1+ O \left(\frac{\log \log (1/L(1,\psi)\log D)}{\log (1/L(1,\psi)\log D)} \right) + O_\varepsilon( L(1,\psi)(\log q)^{45+\varepsilon})\\
&\qquad\qquad + O_\varepsilon( L(1,\psi)^2(\log q)^{56+\varepsilon}) 
\end{align*}
if $\psi(q) = -1$ and $L(1,\psi)(\log D)=o(1)$.
\end{theorem}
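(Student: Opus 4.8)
The plan is to reduce both assertions to nonvanishing statements for central values and central derivatives of the $L(f,s)$, and to establish those by the mollification method, feeding in the smallness of $L(1,\psi)$ through a $\psi$-twisted mollifier.

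First I would partition $S_2^*(q)$ according to the root number $\varepsilon_f$. Since $\varepsilon_f=+1$ forces $r_f$ even and $\varepsilon_f=-1$ forces $r_f$ odd, for even $f$ one has $r_f\le 1\iff L(f,1/2)\ne 0$ and $r_f\le 2\iff \mathrm{ord}_{s=1/2}L(f,s)\le 2$, while for odd $f$ one has $r_f\le 1\iff L'(f,1/2)\ne 0$. Hence the $\psi(q)=1$ statement reduces to showing that almost all even $f$ satisfy $L(f,1/2)\ne 0$ and almost all odd $f$ satisfy $L'(f,1/2)\ne 0$, and the $\psi(q)=-1$ statement reduces to showing that almost all odd $f$ satisfy $L'(f,1/2)\ne 0$ and almost all even $f$ satisfy $\mathrm{ord}_{s=1/2}L(f,s)\le 2$, the latter being equivalent to $aL(f,1/2)+bL''(f,1/2)\ne 0$ for one fixed generic pair $(a,b)$.

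For each of these I would work with harmonic averages over $S_2^*(q)$, weighted by $\omega_f\asymp(qL(\mathrm{sym}^2 f,1))^{-1}$, converting to natural density at the end via $q^{-\varepsilon}\ll L(\mathrm{sym}^2f,1)\ll q^{\varepsilon}$, and use the approximate functional equation for $L(f,1/2)$ (respectively $L'(f,1/2)$) together with the Petersson/Kuznetsov formula. The objects to compute are the mollified first and second moments $\sum_f\omega_f L(f,1/2)M_f$ and $\sum_f\omega_f L(f,1/2)^2M_f^2$, with $M_f=\sum_{m\le q^{\theta}}x_m\lambda_f(m)m^{-1/2}$, the mollifier coefficients $x_m$ being built from the M\"obius function \emph{twisted by} $\psi$, exploiting the heuristic that a Landau--Siegel zero makes $\psi(p)=-1$ for almost all primes $p$ up to $\exp(c/L(1,\psi))$, i.e.\ that $\Lambda\psi\approx-\Lambda$ in that range. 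By Cauchy--Schwarz the number of even $f$ with $L(f,1/2)\ne 0$ is at least $(\text{first moment})^2/(\text{second moment})$, and the whole point is to push this ratio from the classical barrier $\tfrac12+\delta$ up to $1-o(1)$. The main obstacle is the arithmetic main term of the mollified second moment: after Petersson the diagonal produces an Euler product which, thanks to the $\psi$-twist, should telescope to $1$ up to a factor of size $L(1,\psi)(\log q)^{O(1)}$ — this is the source of the $O_\varepsilon(L(1,\psi)(\log q)^{45+\varepsilon})$ and $O_\varepsilon(L(1,\psi)^2(\log q)^{56+\varepsilon})$ errors — and one must check that the twisting really does let the main term attain its optimal size at a length $\theta$ still admissible for the error analysis. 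The off-diagonal Kloosterman contribution is handled by the spectral large sieve together with bounds for shifted convolution sums and is absorbed into the $O_A((\log q)^{-A})$ term; the derivative versions for odd $f$ introduce only extra logarithmic factors, no essential new difficulty.

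Finally I would combine the even and odd counts, pass from harmonic to natural density, and collect the error terms. The dichotomy in the statement mirrors the root-number identity $\varepsilon_{f\otimes\psi}=-\psi(q)\,\varepsilon_f$: when $\psi(q)=1$ the twist reverses parity, the relevant degree-four main terms factor favourably, and one gets the clean $(\log q)^{-A}$ error; when $\psi(q)=-1$ the twisted forms $f\otimes\psi$ share the parity of $f$, the $\psi$-assisted evaluation for even forms only goes through a genuinely degree-four object and a combined $L,L''$ mollifier, which is why one recovers merely $\mathrm{ord}\le 2$ and the weaker error $O\!\big(\log\log(1/L(1,\psi)\log D)\,/\,\log(1/L(1,\psi)\log D)\big)$, the effective mollifier length in that regime being of order $\log(1/L(1,\psi)\log D)$. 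The genuine difficulty throughout is the precise bookkeeping of the $L(1,\psi)$-dependence in the second-moment main terms — making rigorous that the $\psi$-twisted mollifier moves the nonvanishing proportion from $\tfrac12$ to $1-o(1)$ — whereas the shifted-convolution and large-sieve estimates required for the off-diagonal, though technically heavy, are by now standard.
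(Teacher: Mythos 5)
There is a genuine gap at the heart of your plan: you feed the Siegel-zero hypothesis in only through the \emph{mollifier} coefficients, while computing the moments of the individual objects $L(f,1/2)$, $L'(f,1/2)$, $aL(f,1/2)+bL''(f,1/2)$ over even/odd forms. For those moments the structural obstruction to a proportion $1-o(1)$ has nothing to do with $\psi$: the approximate functional equation for $L(f,1/2)^2$ has length about $q$, the mollifier has length $q^{\theta}$ with $\theta$ bounded by the off-diagonal analysis, and the diagonal main term of the mollified second moment exceeds the square of the first by a factor $1+c/\theta$ coming from the un-mollified tail of the $\lambda_f(n)$-sum. A Siegel zero does not make $\lambda_f(n)$ lacunary, so twisting $x_m$ by $\psi$ cannot make this ratio $1+O(L(1,\psi)(\log q)^{O(1)})$; your claim that ``the diagonal produces an Euler product which, thanks to the $\psi$-twist, should telescope to $1$'' fails exactly at this point, and with it the passage from the classical $\tfrac12+\delta$ barrier to $1-o(1)$. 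What the paper does instead is to twist the $L$-function itself: it studies the degree-four completed object $\Lambda_{f,\psi}(s)=\Lambda(f,s)\Lambda(f\otimes\psi,s)$, whose functional equation has sign $-\psi(q)$, and mollifies its first derivative (when $\psi(q)=1$) or second derivative (when $\psi(q)=-1$). After the Hecke relations the Dirichlet coefficients of $\Lambda_{f,\psi}$ carry the factor $(1\star\psi)(n)$, which is lacunary when $L(1,\psi)$ is small; this is what collapses the main terms, so that a mollifier of the tiny length $X=D^{24}$ (polynomial in $D$, negligible against $q\geq D^{750}$) already yields a Cauchy--Schwarz ratio $1+O(L(1,\psi)(\log q)^{O(1)})$ plus the $\log\log/\log$ term, and simultaneously makes the off-diagonal/shifted-convolution error terms power-saving in $q$. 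The rank deductions then come from the product rule and parity: nonvanishing of $\Lambda_{f,\psi}'(1/2)$ forces $r_f\leq 1$, nonvanishing of $\Lambda_{f,\psi}''(1/2)$ forces $r_f\leq 2$, which is precisely the dichotomy in the statement.

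Two secondary points. First, your reduction for even $f$ in the case $\psi(q)=-1$ to a fixed combination $aL(f,1/2)+bL''(f,1/2)\neq 0$ is only usable in the sufficient direction, and the paper's choice is forced on it by the structure of $\Lambda_{f,\psi}''(1/2)$ for even $f$, namely $\Lambda''(f,\tfrac12)\Lambda(f\otimes\psi,\tfrac12)+\Lambda(f,\tfrac12)\Lambda''(f\otimes\psi,\tfrac12)$; there is no freedom to pick a ``generic'' pair. Second, the passage from harmonic to natural averages cannot be done with the crude bounds $q^{-\varepsilon}\ll L(\mathrm{sym}^2 f,1)\ll q^{\varepsilon}$, since a $q^{\varepsilon}$ loss would swamp error terms like $L(1,\psi)(\log q)^{45+\varepsilon}$; the paper has to rerun a moment computation with the extra weight $w_f(q^{\kappa})$ (the Kowalski--Michel removal technique) together with a subconvexity bound for $L(f,1/2)$ to keep the stated error terms intact.
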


\noindent\textbf{Remark.} For convenience we work only with weight two forms, but one could prove an analogue of Theorem \ref{Maintheorem2} in which the weight is any fixed, even $k\geq 2$.

\section{Set up and main propositions}

Let $S_2^*(q)$ be the set of primitive Hecke eigenforms of weight $2$ and level $q$ ($q$ prime). Throughout we let $D$ be large and let $\psi$ be a real, odd, primitive Dirichlet character modulo $D$. We think of $D$ as being quite small compared to $q$. We assume throughout that $L(1,\psi)(\log D) = o(1)$, where $o(1)$ denotes a quantity that tends to zero as $D \rightarrow \infty$. We let $\varepsilon$ denote a sufficiently small positive constant, the size of which might change from one line to the next.

For $f\in S_2^*(q)$, the Fourier expansion of $f$ at infinity takes the form
\[
f(z)=\sum_{n\geq1}\sqrt{n}\lambda_f(n)e(nz)
\]
with $\lambda_f(1)=1$. The $L$-function associated to $f$, $$L(f,s)=\sum_{n\geq1}\frac{\lambda_f(n)}{n^s},$$ satisfies a functional equation
\begin{align}\label{fe1}
\Lambda\Big(f,\frac12+s\Big)&:=\Big(\frac{\sqrt{q}}{2\pi}\Big)^{s}\Gamma(1+s)L\Big(f,\frac12+s\Big)\nonumber\\
& =\epsilon_f\Lambda\Big(f,\frac12-s\Big),
\end{align}
where
$$ \epsilon_f= q^{1/2}\lambda_f(q)=\pm1.$$ 

An eigenform $f$ is said to be {\it even} (resp. {\it odd}) if $\epsilon_f=1$ (resp. $\epsilon_f=-1$). As can be seen from the functional equation \eqref{fe1}, this is the same parity as the analytic rank of the $L$-function. 

For $\psi$ a primitive character modulo $D$, the twist of $f$ by $\psi$,
\[
(f\otimes\psi)(z)=\sum_{n\geq1}\sqrt{n}\psi(n)\lambda_f(n)e(nz),
\]
is a cuspidal modular form of level $qD^2$. Given that $(D,q)=1$, this is a primitive form. The twisted $L$-function is defined by $$L(f\otimes\psi,s)=\sum_{n\geq1}\frac{\psi(n)\lambda_f(n)}{n^s}.$$ This has a functional equation [\textbf{\ref{L}}]
\begin{align*}
\Lambda\Big(f\otimes\psi,\frac12+s\Big)&:=\Big(\frac{\sqrt{q}D}{2\pi}\Big)^{s}\Gamma(1+s)L\Big(f\otimes\psi,\frac12+s\Big)\\
& =-\psi(q)\epsilon_f\Lambda\Big(f\otimes\psi,\frac12-s\Big).
\end{align*}
Hence
\begin{align}\label{fe}
\Lambda_{f,\psi}\Big(\frac12+s\Big)&:=\Lambda\Big(f,\frac12+s\Big)\Lambda\Big(f\otimes\psi,\frac12+s\Big)\nonumber\\
& =-\psi(q)\Lambda_{f,\psi}\Big(\frac12-s\Big).
\end{align}
We are interested in the nonvanishing of $\Lambda_{f,\psi}'(1/2)$ and $\Lambda_{f,\psi}''(1/2)$. Studying derivatives of the product of the completed $L$-functions bestows certain technical advantages compared to studying products of derivatives of the $L$-functions themselves.

\subsection{The mollifier}

The proof of Theorem \ref{Maintheorem} goes through studying nonvanishing of central values of $L$-functions. The relevant connection is given by \eqref{Shimura}, which is due to Eichler and Shimura [\textbf{\ref{Shim}}]. We study the first and second mollified moments of $\Lambda_{f,\psi}'(1/2)$ and $\Lambda_{f,\psi}''(1/2)$, and then deduce results on nonvanishing via a simple application of Cauchy-Schwarz's inequality.

From examining Euler products, one posits that a reasonable choice of mollifier is
\[
\sum_{mn\leq X} \frac{\alpha(m)\mu(m)\alpha(n)\mu(n)\psi(n)\lambda_f(m)\lambda_f(n)}{\sqrt{mn}}
\]
with $X\asymp q$ and 
\[
\alpha(n)=\prod_{p|n}\bigg(1+\frac{1}{p}\bigg)^{-1}.
\]
Using the Hecke relation in Lemma \ref{ortho} below this is
\begin{align*}
&\sum_{mn\leq X}\frac{\alpha(m)\mu(m)\alpha(n)\mu(n)\psi(n)\lambda_f(mn)}{\sqrt{mn}}\sum_{\substack{d\leq \sqrt{X/mn}\\(d,mn)=1}}\frac{\alpha(d)^2\mu(d)^2\psi(d)}{d}.
\end{align*}
By lacunarity, we can extend the sum on $d$ to all $(d,mn)=1$, so we choose the mollifier to be
\begin{equation}\label{mollifier}
M_{f,\psi}=\sum_{a\leq X}\frac{\rho_1(a)\lambda_f(a)}{\sqrt{a}}, 
\end{equation}
where
\[
\rho_1(a)=\big((\alpha\mu)\star(\alpha\mu\psi)\big)(a)h(a)
\]
and
\[
h(n)=h_\psi(n)=\prod_{p \mid n} \left(1 + \frac{\alpha(p)^2\psi(p)}{p} \right)^{-1}.
\]
In particular we have $\rho_1(a)\ll_\eps (\log q)^\eps \tau(a)$.

Applying Lemma \ref{ortho} one more time we get
\begin{align}\label{mollifiersquare}
M_{f,\psi}^2&=\sum_{d\leq X}\frac1d\sum_{a_1,a_2\leq X/d}\frac{\rho_1(da_1)\rho_1(da_2)\lambda_f(a_1a_2)}{\sqrt{a_1a_2}}\nonumber\\
&=\sum_{a\leq X^2}\frac{\rho_2(a)\lambda_f(a)}{\sqrt{a}},
\end{align}
where
\begin{align}\label{rho2}
\rho_2(a)&=\sum_{d\leq X}\frac1d\sum_{\substack{a_1a_2=a\\a_1,a_2\leq X/d}}\rho_1(da_1)\rho_1(da_2).
\end{align}
In particular we have $\rho_2(a)\ll_\eps (\log q)^{4+\eps} \tau_4(a)$.

\subsection{Main propositions}

We state here the key propositions we need. For technical convenience we work with the so-called ``harmonic average.'' For complex numbers $\alpha_f$ we write
\begin{align*}
\sideset{}{^h}\sum_{f\in S_2^*(q)} \alpha_f = \sum_{f\in S_2^*(q)} \omega_f \alpha_f,
\end{align*}
where $\omega_f = 1/4 \pi \langle f,f \rangle$ and $\langle \cdot,\cdot \rangle$ is the Petersson inner product on $\Gamma_0(q) \backslash \mathbb{H}$. We shall discuss in Section \ref{sec:proof of main thm} how to remove the weights $\omega_f$ and thereby state results for the ``natural average''
\begin{align*}
\frac{1}{|S_2^*(q)|}\sum_{f\in S_2^*(q)} \alpha_f.
\end{align*}
We note that
\begin{align*}
\sideset{}{^h}\sum_{f\in S_2^*(q)} 1 = 1+O(q^{-3/2}).
\end{align*}

\begin{proposition}\label{mainprop1}
Provided that $q,X\geq D^8$ we have 
\begin{align*}
\sideset{}{^h}\sum_{f\in S_2^*(q)}\Lambda_{f,\psi}'\Big(\frac{1}{2}\Big)M_{f,\psi}&=2\big(1+\psi(q)\big)\mathfrak{S}_1L'(1,\psi)+O_\eps\big(L(1,\psi)(\log q)^{7+\eps}\big)+O_A\big((\log q)^{-A}\big)\\
&\qquad\qquad+O_\varepsilon\big(q^{-1/2+\varepsilon}DX\big)\nonumber
\end{align*}
and
\begin{align*}
&\sideset{}{^h}\sum_{f\in S_2^*(q)}\Lambda_{f,\psi}''\Big(\frac{1}{2}\Big)M_{f,\psi}\\
&\qquad=4\big(1-\psi(q)\big)\mathfrak{S}_1\bigg(1 + O \Big(\frac{\log \log (1/L(1,\psi)\log D)}{\log (1/L(1,\psi)\log D)} \Big) \bigg)\Big(L''(1,\psi)+(\log Q-2\gamma)L'(1,\psi)\Big)\\
&\qquad\qquad+O_\varepsilon\big(L(1,\psi)(\log q)^{8+\varepsilon}\big)+O_\varepsilon\big(q^{-1/2+\varepsilon}DX\big),
\end{align*}
where
\begin{equation}\label{singularseries1}
\mathfrak{S}_1=\prod_{\substack{p \mid D}}\bigg(1+\frac1p\bigg)^{-1} \prod_{\substack{p \leq X \\ p\nmid D}} \left(\Big(1+\frac1p\Big)^2 + \frac{\psi(p)}{p} \right)^{-1}\bigg(1- \frac{\psi(p)}{p}\bigg).
\end{equation}
\end{proposition}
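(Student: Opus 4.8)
The proof of both identities follows one template, with the two displays corresponding to reading off the first, respectively the second, Taylor coefficient of $\Lambda_{f,\psi}$ at the centre.

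\emph{From derivatives to Dirichlet series.} Fix an even entire function $G$, rapidly decaying in vertical strips, with $G(0)=1$ and $G''(0)=0$ (e.g. $G(s)=(1-s^{2})e^{s^{2}}$). Shifting the contour in $\frac{1}{2\pi i}\int_{(2)}\Lambda_{f,\psi}(\tfrac12+s)G(s)s^{-k}\,ds$ from $\mathrm{Re}(s)=2$ to $\mathrm{Re}(s)=-2$ and using the functional equation \eqref{fe} (recall $\psi(q)=\pm1$) yields
\[
\Lambda_{f,\psi}'\Big(\tfrac12\Big)=\big(1+\psi(q)\big)\frac{1}{2\pi i}\int_{(2)}\Lambda_{f,\psi}\Big(\tfrac12+s\Big)\frac{G(s)}{s^{2}}\,ds,\qquad \Lambda_{f,\psi}''\Big(\tfrac12\Big)=2\big(1-\psi(q)\big)\frac{1}{2\pi i}\int_{(2)}\Lambda_{f,\psi}\Big(\tfrac12+s\Big)\frac{G(s)}{s^{3}}\,ds,
\]
the hypothesis $G''(0)=0$ being what cancels the stray $\Lambda_{f,\psi}(1/2)$ appearing in the second identity. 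From \eqref{fe1} one has $\Lambda_{f,\psi}(\tfrac12+s)=\big(qD/(2\pi)^{2}\big)^{s}\Gamma(1+s)^{2}L(f,\tfrac12+s)L(f\otimes\psi,\tfrac12+s)$, and the Hecke relation of Lemma \ref{ortho}, on manipulating Euler products, gives
\[
L(f,w)L(f\otimes\psi,w)=L(2w,\psi)\sum_{N\geq1}\frac{\lambda_f(N)\sigma_\psi(N)}{N^{w}},\qquad \sigma_\psi=\mathbf 1\star\psi,
\]
up to the contribution of $q\mid N$, which is $O_A\big((\log q)^{-A}\big)$. Interchanging sum and integral expresses $\Lambda_{f,\psi}'(1/2)$ as $(1+\psi(q))\sum_N N^{-1/2}\lambda_f(N)\sigma_\psi(N)V(N)$, and $\Lambda_{f,\psi}''(1/2)$ analogously with a function $\widetilde V$, where $V,\widetilde V$ are inverse Mellin transforms of $\big(qD/(2\pi)^{2}\big)^{s}\Gamma(1+s)^{2}L(1+2s,\psi)G(s)s^{-k}$, $k=2,3$. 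Both decay faster than any power of $N/(qD)$ once $N\gg qD(\log q)^{O(1)}$, while for smaller $N$ they equal, up to a negligible remainder, the residue at $s=0$: a polynomial in $\log(qD/N)$ of degree $k-1$ whose coefficients are built from the Taylor coefficients of $\Gamma(1+s)^{2}L(1+2s,\psi)$ at $s=0$. In particular $V(N)=2L'(1,\psi)+\big(\log(qD/N)+O(1)\big)L(1,\psi)+\cdots$ and $\widetilde V(N)=2L''(1,\psi)+2\big(\log(qD/N)-2\gamma+O(1)\big)L'(1,\psi)+\cdots$.

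\emph{Mollifying and opening with Petersson.} Multiplying by $M_{f,\psi}$ from \eqref{mollifier}, applying the Hecke relation of Lemma \ref{ortho} to $\lambda_f(N)\lambda_f(a)$, and then the harmonic Petersson formula (Lemma \ref{ortho}), the $f$-average splits into a diagonal term, coming only from $N=a$, and an off-diagonal term built from Kloosterman sums $S(Na/e^{2},1;c)$ with $q\mid c$ against Bessel factors $J_1\big(4\pi\sqrt{Na/e^{2}}/c\big)$. Using Weil's bound, $J_1(y)\ll\min(y,y^{-1/2})$, $\rho_1(a)\ll_\eps\tau(a)(\log q)^{\eps}$, $\sigma_\psi(N)\ll\tau(N)$, and $V,\widetilde V\ll(\log q)^{O(1)}$ on the effective range $N\ll qD(\log q)^{O(1)}$, the off-diagonal contributes $O_\eps\big(q^{-1/2+\eps}DX\big)$; the conditions $q,X\geq D^{8}$ serve only to keep stray $q^{\eps},D^{\eps}$ factors under control. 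This is the last error term in each display.

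\emph{Evaluating the diagonal.} The diagonal is $(1+\psi(q))\sum_{N\leq X}\frac{\sigma_\psi(N)\rho_1(N)}{N}V(N)$, respectively $2(1-\psi(q))\sum_{N\leq X}\frac{\sigma_\psi(N)\rho_1(N)}{N}\widetilde V(N)$. Inserting the polynomial form of $V,\widetilde V$ (valid for $N\leq X$ up to an error $O(D^{-\delta}(\log q)^{O(1)})=O_A((\log q)^{-A})$), matters reduce to evaluating $\Sigma_0(X):=\sum_{N\leq X}\frac{\sigma_\psi(N)\rho_1(N)}{N}$ together with the log-weighted sum $\sum_{N\leq X}\frac{\sigma_\psi(N)\rho_1(N)}{N}\log N$. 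The function $\sigma_\psi\rho_1$ is multiplicative and, since $\alpha\mu$ and $\alpha\mu\psi$ live on squarefree numbers, supported at each prime on $p^{0},p^{1},p^{2}$ only; a direct computation shows that its local Euler factor at $s=1$ is exactly the $p$-th factor of the partial product $\mathfrak S_1$ in \eqref{singularseries1} — this is precisely the purpose of the factor $h$ in $\rho_1$. Writing $\sum_N\sigma_\psi(N)\rho_1(N)N^{-w}=\mathcal H(w)\big(\zeta(w)^{2}L(w,\psi)^{2}\big)^{-1}$ with $\mathcal H$ holomorphic and bounded for $\mathrm{Re}(w)>\tfrac12+\eps$, one evaluates $\Sigma_0(X)$ and the log-weighted sum by Perron's formula, treating the cutoff $N\leq X$ by a smooth truncation and a further contour shift into the zero-free region, the only relevant pole being the Landau--Siegel zero $\beta$ of $L(\cdot,\psi)$. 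This gives $\Sigma_0(X)=\mathfrak S_1\big(1+O(\mathrm{err})\big)$ and shows that, relative to the term $(\log Q)\Sigma_0(X)$, the log-weighted sum contributes only an amount of relative size $O(\mathrm{err})$; feeding these into the expressions for $V,\widetilde V$ and using $L(1,\psi)(\log D)=o(1)$ to see that the $L(1,\psi)$-terms are of lower order than the $L'(1,\psi),L''(1,\psi)$-terms produces the claimed main terms $2(1+\psi(q))\mathfrak S_1L'(1,\psi)$ and $4(1-\psi(q))\mathfrak S_1\big(1+O(\cdot)\big)\big(L''(1,\psi)+(\log Q-2\gamma)L'(1,\psi)\big)$.

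\emph{The main obstacle.} The technical heart, and the source of the factor $1+O\big(\log\log(1/L(1,\psi)\log D)/\log(1/L(1,\psi)\log D)\big)$ in the second display, is the precision with which one can evaluate $\Sigma_0(X)$ and especially its log-weighted companion. In the presence of a Siegel zero the series $\sum_N\sigma_\psi(N)\rho_1(N)N^{-w}$ carries a nearly cancelling double zero at $w=1$ and double pole at $w=\beta$, so that the bias of $\psi$ over primes only becomes effective on the scale $\exp\big(c/(1-\beta)\big)$; for $X\leq q^{O(1)}$ this scale is typically not reached, and the log-weighted sum can only be controlled up to the stated relative loss. Quantifying this — through a mean-value estimate for the multiplicative function $\sigma_\psi\rho_1$ that feeds in the Siegel-zero information — is where most of the work lies. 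For the first moment of $\Lambda_{f,\psi}'$ no such loss survives: there the log-weighted sum appears only against the tiny factor $L(1,\psi)$ and disappears into $O_\eps\big(L(1,\psi)(\log q)^{7+\eps}\big)$, while the main term needs only $\Sigma_0(X)=\mathfrak S_1+O(\text{small enough})$, which the crude contour shift already delivers.
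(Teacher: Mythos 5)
Your outline agrees with the paper up through the extraction of the diagonal: the contour representation of $\Lambda_{f,\psi}^{(k)}(1/2)$ with an auxiliary factor $G$, the Hecke relation producing the coefficients $(1\star\psi)$, Petersson's formula, and the off-diagonal bound $O_\varepsilon(q^{-1/2+\varepsilon}DX)$ are all as in the paper. The gap is in how you evaluate the diagonal, i.e. $\Sigma_0(X)=\sum_{N\le X}\sigma_\psi(N)\rho_1(N)/N$ and its log-weighted companion. You propose Perron's formula with a contour shift into the zero-free region, the Siegel zero being the only pole crossed. In the regime of the proposition $X$ is a fixed power of $D$ (here $X\ge D^{8}$, and $X=D^{24}$, $q\le D^{C}$ in the application), so a shift to $\mathrm{Re}(w)=-c/\log D$ saves only $X^{-c/\log D}=\exp(-c\log X/\log D)$, which is bounded below by an absolute constant: this cannot produce error terms of the shape $O_\varepsilon(L(1,\psi)(\log q)^{O(1)})+O_A((\log q)^{-A})$, nor even an $o(1)$ relative error. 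Moreover the relevant Dirichlet series $\sum_N\sigma_\psi(N)\rho_1(N)N^{-1-w}=\mathcal H(1+w)\zeta(1+w)^{-2}L(1+w,\psi)^{-2}$ has a double \emph{zero} at $w=0$, so no residue at $w=0$ appears; the main term $\mathfrak S_1$, a partial Euler product over $p\le X$, does not emerge from your contour analysis, and you give no argument identifying it (to the required accuracy) with a Siegel-zero residue. The paper evaluates this step without any zero-free region input: a Rankin-trick truncation to $n\le T=\exp((\log X)(\log\log X)^{2})$, then the quantitative lacunarity estimates $\sum_{D^2<n\le x}(1\star\psi)(n)/n\le L(1,\psi)\log x$ and their divisor-twisted refinement (Lemmas 3.5--3.7) to replace the truncated sum by the partial Euler product with error $O_\varepsilon(L(1,\psi)(\log q)^{5+\varepsilon})+O_A((\log q)^{-A})$, and the log-weighted sum ($\mathcal S_1'(0)$) is bounded through the logarithmic derivative of that Euler product, which reduces to $\sum_{p\le X,\,\psi(p)=1}(\log p)/p$ and is controlled by lacunarity plus Friedlander--Iwaniec (Lemma 3.8). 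Some substitute for this elementary input is indispensable; your sketch asserts $\Sigma_0(X)=\mathfrak S_1(1+O(\mathrm{err}))$ without a mechanism capable of delivering it.

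A second, smaller omission concerns the multiplicative form of the second asymptotic. The term $4(1-\psi(q))\mathcal S_1'(0)L'(1,\psi)$ can be absorbed into the factor $1+O\big(\log\log(1/L(1,\psi)\log D)/\log(1/L(1,\psi)\log D)\big)$ only because the paper proves a lower bound on the bracket: by Lemma 3.9 and the nonnegativity of $(1\star\psi)$, together with $q\ge D^{8}$, one has $L''(1,\psi)+(\log Q-2\gamma)L'(1,\psi)\ \ge\ \tfrac12(\log Q)\sum_{n\le D^{2}}(1\star\psi)(n)/n\ \ge\ \tfrac12\log Q$, while $L'(1,\psi)\log D$ is dominated by the same quantity. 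Your write-up attributes the relative error solely to the precision of the log-weighted sum and never establishes such a lower bound; without it, the claimed factorization $\mathfrak S_1\big(1+O(\cdot)\big)\big(L''(1,\psi)+(\log Q-2\gamma)L'(1,\psi)\big)$ does not follow, since a priori the bracket could be small compared with $L'(1,\psi)\log D$. In short, the skeleton is correct, but the two analytic hearts of the proposition --- the lacunarity evaluation of the singular series and its derivative, and the positivity argument enabling the relative-error form --- are missing or replaced by contour-shift arguments that cannot succeed at this size of $X$.
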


\begin{proposition}\label{mainprop2}
Provided that $X\geq D^{24}$ and $D^8X^6\ll q^{1-\eps}$ we have 
\begin{align*}
\sideset{}{^h}\sum_{f\in S_2^*(q)}\Lambda_{f,\psi}'\Big(\frac{1}{2}\Big)^2M_{f,\psi}^2&=4\big(1+\psi(q)\big)^2\mathfrak{S}_2L'(1,\psi)^2+O_\eps\big(L(1,\psi)(\log q)^{37+\eps}\big)+ O_A((\log q)^{-A})\\
&\qquad\qquad+O_\varepsilon\big(q^{-1/12+\eps}D^{5/3}X^{5/2}\big)+O_\eps\big(q^{-1/4+\eps}D^{17/4}X^{19/4}\big),\nonumber
\end{align*}
where
\begin{equation}\label{singularseries2}
\mathfrak{S}_2=\mathfrak{S}_1^2=\prod_{\substack{p \mid D}}\bigg(1+\frac1p\bigg)^{-2} \prod_{\substack{p \leq X \\ p\nmid D}} \left(\Big(1+\frac1p\Big)^2 + \frac{\psi(p)}{p} \right)^{-2}\bigg(1- \frac{\psi(p)}{p}\bigg)^2.
\end{equation}
\end{proposition}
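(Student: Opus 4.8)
The plan is to establish Proposition~\ref{mainprop2} as an (elaborate but conceptually parallel) analogue of the first moment asymptotic in Proposition~\ref{mainprop1}, now for the square $\Lambda_{f,\psi}'(1/2)^2 M_{f,\psi}^2$. First I would expand $\Lambda_{f,\psi}'(1/2)^2$ using the functional equation \eqref{fe}: differentiating once and evaluating at $s=0$, and using $\psi(q)=-1$ implicitly handled inside the sign, one writes $\Lambda_{f,\psi}'(1/2)^2$ as an approximate functional equation, i.e.\ a double Dirichlet series in two variables $m_1,m_2$ weighted by $\lambda_f(m_1)\lambda_f(m_2)$, a smooth cutoff at $m_1m_2 \ll q D^2$ (the conductor of $L_{f,\psi}$), and two factors of $(\log \cdot)$ coming from the derivatives. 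Multiplying by $M_{f,\psi}^2 = \sum_{a \leq X^2} \rho_2(a)\lambda_f(a)/\sqrt a$ from \eqref{mollifiersquare}, one then has a sum over $f$ of $\lambda_f(m_1)\lambda_f(m_2)\lambda_f(a)$ against the harmonic weight.

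The next step is to apply the Petersson/Hecke orthogonality formula (Lemma~\ref{ortho}) to collapse the average over $f$. The diagonal contribution produces the main term: one is left with an arithmetic sum over $m_1,m_2,a$ with $m_1 m_2 = (\text{square part}) \cdot a'$-type constraints, weighted by $\rho_2(a)$ and the $\psi$-twisted coefficients. Here I would factor the resulting Dirichlet series into an Euler product, identify the polar part coming from $L(1,\psi)$-type factors (since $\psi$ is the exceptional character, $L(1,\psi)$ is tiny, and derivatives $L'(1,\psi)$, $L''(1,\psi)$ enter), and match it against $4(1+\psi(q))^2 \mathfrak S_2 L'(1,\psi)^2$ with $\mathfrak S_2 = \mathfrak S_1^2$. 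The appearance of the square of the singular series and of $L'(1,\psi)^2$ is forced by the fact that the second mollified moment factors, to leading order, as the square of the first; the bookkeeping is to show the secondary terms are genuinely of size $O_\eps(L(1,\psi)(\log q)^{37+\eps})$, which requires carefully tracking the powers of $\log q$ contributed by $\rho_2 \ll (\log q)^{4+\eps}\tau_4$, by the two derivative-logarithms, and by the contour shifts used to isolate $L'(1,\psi)^2$.

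The off-diagonal (Kloosterman) contribution is where the error terms $O_\varepsilon(q^{-1/12+\eps}D^{5/3}X^{5/2})$ and $O_\eps(q^{-1/4+\eps}D^{17/4}X^{19/4})$ come from, and this I expect to be the main obstacle. After Petersson, the complementary terms involve sums of Kloosterman sums $S(m_1 m_2 a, 1; cq)$ weighted by Bessel functions; one opens these with Weil's bound or, more efficiently, with the spectral large sieve / Kuznetsov-type estimates, and then must sum over $m_1, m_2 \lesssim qD^2$ and $a \lesssim X^2$. The two distinct error terms presumably reflect two regimes (e.g.\ treating the $c=1$ term separately, or splitting according to whether one applies a trivial bound versus a large-sieve bound, or handling the ``balanced'' versus ``unbalanced'' ranges of $m_1,m_2$); in each case the length of the mollifier $X$ enters with a high power (here $X^{5/2}$ and $X^{19/4}$), so to make these negligible one needs $q$ large relative to $X$, which is exactly the hypothesis $D^8 X^6 \ll q^{1-\eps}$. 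I would organize the off-diagonal analysis so that the Bessel weight localizes the Kloosterman modulus, apply the large sieve inequality for the relevant coefficient sequences (incorporating the $\psi$-twist, which is harmless since $(D,q)=1$), and optimize the resulting bound against the trivial estimate to produce exactly the stated pair of power-saving error terms. Throughout, the condition $X \geq D^{24}$ is used to guarantee the mollifier is long enough that the main term genuinely dominates (so that the eventual Cauchy–Schwarz lower bound for nonvanishing has no loss), paralleling the role of $q, X \geq D^8$ in Proposition~\ref{mainprop1}.
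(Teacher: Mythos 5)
Your overall frame (approximate functional equation for each factor, Petersson, diagonal gives $4(1+\psi(q))^2\mathfrak{S}_2L'(1,\psi)^2$) matches the paper's skeleton, but your treatment of the off-diagonal is where the proposal breaks down. You assume the Kloosterman contribution can be bounded purely by Weil's bound or a spectral large sieve and "optimized" to give exactly the two power-saving errors $q^{-1/12+\eps}D^{5/3}X^{5/2}$ and $q^{-1/4+\eps}D^{17/4}X^{19/4}$. That cannot work here: after the two approximate functional equations the variables $m,n$ run up to about $qD$ each against Kloosterman sums of modulus $cq$, and bounding $S(m,agn;cq)$ in absolute value (Weil, or any large-sieve estimate that discards the arithmetic of $(1\star\psi)$) loses positive powers of $q$ rather than saving them. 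More fundamentally, the off-diagonal is \emph{not} power-saving small in this problem. In the paper its contribution $E_k$ (Proposition \ref{propE_k}) contains a term $L(1,\psi)(\log q)^{2k+24+\eps}$: applying Voronoi summation for $(1\star\psi)$ (Lemmas \ref{voronoi} and \ref{voronoi2}) produces zero-frequency main terms proportional to $L(1,\psi)$ (the term $\mathcal{M}_k^{OD}$), and the nonzero frequencies lead to shifted convolution sums $\sum_{am-bn=h}(\psi_1\star\psi_2)(m)(1\star\psi)(n)$ that are evaluated with the Duke--Friedlander--Iwaniec delta method (Proposition \ref{propoff}), whose main terms carry $L(1,\psi)^2$. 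These contributions are admissible only because the exceptional-character hypothesis makes $L(1,\psi)$ tiny, and they are exactly what part of the $O_\eps(L(1,\psi)(\log q)^{37+\eps})$ in the statement is there to absorb. A scheme that treats the off-diagonal as pure error to be beaten down to a power of $q$ misses the central mechanism of the proof; relatedly, your guess for the origin of the two power-saving terms is off — they come from truncating the $c$-sum at $C=q^{-1/3}D^{-5/6}\sqrt{M}$ as in \eqref{valueofC} and from the error term of the delta-method shifted-convolution estimate summed over the shifts $h$, as in \eqref{1stboundMkOOD}, not from balanced/unbalanced ranges of $m_1,m_2$.

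A secondary, smaller gap: in the diagonal you say the bookkeeping reduces to tracking logarithms, but identifying $\mathcal{S}_2(0,0)$ with the Euler product $\mathfrak{S}_2$ up to $O_\eps(L(1,\psi)(\log q)^{33+\eps})$ is itself a lacunarity argument (smooth-number truncation plus Lemma \ref{lem:updated lacunarity lemma} applied to the entangled variables $a,b,d,n$, as in Lemma \ref{lemmaB1}); without invoking the smallness of $L(1,\psi)$ at this stage you cannot replace the truncated sums by the product \eqref{singularseries2}. Also, a minor slip: the completed product $\Lambda_{f,\psi}$ has conductor $q^2D^2$, so the natural cutoff in the double expansion is $m_1m_2\ll (qD)^{2}$ (each variable of length about $qD$), not $qD^2$.
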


\begin{proposition}\label{mainprop3}
Provided that $X\geq D^{24}$ and $D^8X^6\ll q^{1-\eps}$ we have 
\begin{align*}
&\sideset{}{^h}\sum_{f\in S_2^*(q)}\Lambda_{f,\psi}''\Big(\frac{1}{2}\Big)^2M_{f,\psi}^2\\
&\ =16\big(1-\psi(q)\big)^2 \mathfrak{S}_2\bigg(1 + O \Big(\frac{\log \log (1/L(1,\psi)\log D)}{\log (1/L(1,\psi)\log D)} \Big) \bigg)\Big(L''(1,\psi)+(\log Q-2\gamma)L'(1,\psi)\Big)^2\\
&\qquad\qquad+ O_\varepsilon( L(1,\psi)(\log q)^{39+\varepsilon}) +O_\varepsilon\big(q^{-1/12+\eps}D^{5/3}X^{5/2}\big)+O_\eps\big(q^{-1/4+\eps}D^{17/4}X^{19/4}\big).
\end{align*}
\end{proposition}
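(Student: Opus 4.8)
The plan is to mirror the proof of Proposition \ref{mainprop2}, tracking the extra logarithmic factors generated by the second derivative. First I would use the factorization $M_{f,\psi}^2 = \sum_{a \leq X^2} \rho_2(a)\lambda_f(a)/\sqrt{a}$ from \eqref{mollifiersquare}, and an approximate functional equation for $\Lambda_{f,\psi}(s)$ to express $\Lambda_{f,\psi}''(1/2)$ in terms of Dirichlet series in the coefficients $\lambda_f(m)\psi(n)\lambda_f(n)$ (with $m \leq q^{1/2+o(1)}D$, say), against a smooth weight function whose Mellin transform now carries two extra factors of $\log$ — one from differentiating $\Lambda(f,\cdot)$ twice and products of first derivatives of the two completed factors. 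Expanding the square of this expression and inserting the mollifier, I would then apply the Petersson formula (Lemma \ref{ortho}, the ``harmonic average'' orthogonality relation) to the sum over $f \in S_2^*(q)$. The diagonal term $m_1 m_2 a_1 a_2 = n_1 n_2 b_1 b_2$ will produce the main term, and the off-diagonal Kloosterman contributions are controlled exactly as in Proposition \ref{mainprop2}, giving the same error terms $O_\varepsilon(q^{-1/12+\varepsilon}D^{5/3}X^{5/2}) + O_\varepsilon(q^{-1/4+\varepsilon}D^{17/4}X^{19/4})$ under the hypotheses $X \geq D^{24}$ and $D^8X^6 \ll q^{1-\varepsilon}$.

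Next I would evaluate the diagonal main term. As in Propositions \ref{mainprop1} and \ref{mainprop2}, after extracting the factor $16(1-\psi(q))^2$ (the second derivative picks out the parity-$(-1)$ projection, squared) and the singular series $\mathfrak{S}_2 = \mathfrak{S}_1^2$, the arithmetic sum over the mollifier coefficients $\rho_2$ reduces to a contour integral over a product of Dirichlet $L$-functions evaluated near $s = 1$. Here the key analytic input is the Landau-Siegel hypothesis $L(1,\psi)\log D = o(1)$: expanding $L(1+z,\psi)$, $L'$, $L''$ near $z=0$ and using that $L(1,\psi)$ is small forces the main contribution to come from the leading Laurent behavior, producing $\big(L''(1,\psi) + (\log Q - 2\gamma)L'(1,\psi)\big)^2$ together with the relative error $O\big(\log\log(1/L(1,\psi)\log D)/\log(1/L(1,\psi)\log D)\big)$. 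This step is essentially a ``repetition'' of the corresponding computation underlying the second displayed formula in Proposition \ref{mainprop1}, now squared; the extra $\log$-powers in the $L(1,\psi)$-dependent error terms, compared to Proposition \ref{mainprop2}, come from the two additional derivatives and amount to the shift from $(\log q)^{37+\varepsilon}$ to $(\log q)^{39+\varepsilon}$.

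The main obstacle I anticipate is bookkeeping the weight functions arising from the approximate functional equation for the \emph{second} derivative: differentiating $\Lambda_{f,\psi}(1/2+s)\Lambda_{f,\psi}(1/2-s)$-type kernels twice and then squaring produces a proliferation of terms involving $\log q$, $\log D$, digamma values, and the archimedean integral, and one must verify that every such term either collapses into the clean combination $L''(1,\psi) + (\log Q - 2\gamma)L'(1,\psi)$ (where the implicit $\log Q$ absorbs the relevant conductor logarithms) or else is absorbed into the stated error terms using the crude bound $\rho_2(a) \ll_\varepsilon (\log q)^{4+\varepsilon}\tau_4(a)$. A secondary technical point is ensuring that the off-diagonal analysis — in particular the treatment of the shifted convolution sums via Kloosterman-sum bounds — is genuinely insensitive to replacing the first-derivative weight by a second-derivative weight, which it should be since the weight functions remain of the same analytic quality (rapid decay, polynomially bounded derivatives) up to the harmless extra logarithmic factors already accounted for.
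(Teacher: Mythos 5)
Your overall skeleton is the paper's: apply the approximate functional equation for $\Lambda_{f,\psi}''(1/2)$ (Lemma \ref{afe} with $k=2$), combine one copy with $M_{f,\psi}^2$ via the Hecke relations, apply Petersson's formula (Lemma \ref{Petersson} — not Lemma \ref{ortho}, which is the Hecke relation), and treat the Kloosterman/off-diagonal contribution uniformly in $k$; indeed in the paper both Proposition \ref{mainprop2} and Proposition \ref{mainprop3} are deduced from one pair of statements (Propositions \ref{prop:second moment main term} and \ref{propE_k}), and the error bounds $q^{-1/12+\varepsilon}D^{5/3}X^{5/2}+q^{-1/4+\varepsilon}D^{17/4}X^{19/4}$ are insensitive to $k$ exactly as you predict.

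The genuine gap is in the diagonal main term for $k=2$. After the double contour shift, the order-three poles produce, besides $16(1-\psi(q))^2\mathcal{S}_2(0,0)\big(L''(1,\psi)+(\log Q-2\gamma)L'(1,\psi)\big)^2$, the cross terms $32(1-\psi(q))^2\,\partial_u\mathcal{S}_2(0,0)\,L'(1,\psi)\big(L''+(\log Q-2\gamma)L'\big)$ and $16(1-\psi(q))^2\,\partial_{uv}^2\mathcal{S}_2(0,0)\,L'(1,\psi)^2$; these are \emph{not} negligible in absolute terms and are precisely what must be converted into the relative error $1+O\big(\log\log(1/L(1,\psi)\log D)/\log(1/L(1,\psi)\log D)\big)$. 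Your proposed mechanism — Laurent-expanding $L(1+z,\psi)$ near $z=0$ and invoking the smallness of $L(1,\psi)$ — does not produce this: the quantity $1/(L(1,\psi)\log D)$ never arises from such an expansion. What is actually needed is (i) a bound for the logarithmic derivatives of the mollified singular series, $\partial_u\mathcal{S}_2(0,0)\ll \mathfrak{S}_2\,\frac{\log\log(1/L(1,\psi)\log D)}{\log(1/L(1,\psi)\log D)}\log D+\cdots$ (Lemma \ref{lemmaB2}), which reduces to the lacunary prime sum $\sum_{p\le X,\ \psi(p)=1}(\log p)/p$ and requires Lemma \ref{lem:lac sum over primes} (resting on a tensor-power trick and Friedlander--Iwaniec, Proposition 3.1 of [\textbf{\ref{FI13}}]); and (ii) a lower bound $L''(1,\psi)+(\log Q-2\gamma)L'(1,\psi)\gg \log Q$, obtained from Lemma \ref{lem:deriv expressions} and the nonnegativity of $(1\star\psi)$, so that the cross terms can be absorbed as a relative error against the main term. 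Your remark that this step ``repeats'' the second display of Proposition \ref{mainprop1} points at the right place, but that step is exactly where the work lies, and the justification you give for it would not go through; moreover $\mathcal{S}_2(u,v)$ and its derivatives need their own lacunarity analysis (Lemma \ref{lemmaB1}), substantially more intricate than the first-moment case because the variables $a,b,d,n$ are entangled.
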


\section{Auxiliary lemmas}

The first lemma is the Hecke relation.

\begin{lemma}\label{ortho}
For $m,n\geq1$, we have
\begin{displaymath}
\lambda_{f}(m)\lambda_{f}(n)=\sum_{\substack{d|(m,n)\\(d,q)=1}}\lambda_{f}\Big(\frac{mn}{d^2}\Big).
\end{displaymath}
\end{lemma}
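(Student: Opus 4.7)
The statement is the standard Hecke multiplicativity relation for normalized eigenvalues of a newform $f \in S_2^*(q)$, so the plan is to reduce it to known properties of Hecke operators and then to a purely local calculation at each prime. Since $f$ is a primitive Hecke eigenform with $\lambda_f(1)=1$, one has $T_n f = \lambda_f(n) f$ for every $n \geq 1$, where $T_n$ is the usual $n$-th Hecke operator on $S_2(\Gamma_0(q))$. The identity to prove is therefore equivalent (after applying both sides to $f$ and comparing eigenvalues) to the operator identity
\[
T_m T_n = \sum_{\substack{d \mid (m,n) \\ (d,q)=1}} T_{mn/d^2},
\]
which is the classical Hecke algebra relation at prime level.

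First I would reduce to prime powers using multiplicativity. Writing $m = \prod_p p^{a_p}$ and $n = \prod_p p^{b_p}$, the operators $T_{p^a}$ for distinct $p$ commute and $T_{mn'} = T_m T_{n'}$ whenever $(m,n')=1$, so both sides of the proposed identity factor as products over primes. It therefore suffices to prove, for each prime $p$ and all $a,b \geq 0$,
\[
\lambda_f(p^a)\lambda_f(p^b) \;=\; \sum_{\substack{0 \leq j \leq \min(a,b) \\ p^j \text{ coprime to } q}} \lambda_f(p^{a+b-2j}).
\]

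Next I would split according to whether $p = q$ or $p \neq q$. For $p \neq q$, the Hecke relation $T_{p^{k+1}} = T_p T_{p^k} - p \cdot p^{k-1} T_{p^{k-1}}$ (in the weight-two normalization one has the eigenvalue recursion $\lambda_f(p^{k+1}) = \lambda_f(p)\lambda_f(p^k) - \lambda_f(p^{k-1})$) gives, by a short induction on $\min(a,b)$, the local identity
\[
\lambda_f(p^a)\lambda_f(p^b) = \sum_{j=0}^{\min(a,b)} \lambda_f(p^{a+b-2j}),
\]
matching the stated sum since the coprimality condition $(d,q)=1$ is automatic here. For $p = q$, since $f$ is a newform of prime level $q$, the local factor at $q$ degenerates to $(1 - \lambda_f(q) q^{-s})^{-1}$, so $\lambda_f(q^k) = \lambda_f(q)^k$ for all $k \geq 0$, giving $\lambda_f(q^a)\lambda_f(q^b) = \lambda_f(q^{a+b})$. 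This is exactly the single term corresponding to $d = 1$ in the right-hand side, in agreement with the restriction $(d,q)=1$ which forbids any $d$ with $q \mid d$.

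Finally, multiplying the local identities over all primes and using the fact that the divisor $d$ on the right-hand side of the lemma decomposes uniquely as $\prod_p p^{j_p}$ with $0 \le j_p \le \min(a_p, b_p)$ and $j_q = 0$, one recovers the global formula. The whole argument is routine; there is no real obstacle, and the only point that needs attention is handling the prime $p = q$ correctly (where the Euler factor is of degree one rather than two), which is precisely why the coprimality constraint $(d,q)=1$ appears in the statement.
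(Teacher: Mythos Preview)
Your proposal is correct. The paper itself does not supply a proof of this lemma at all: it simply states it as ``the Hecke relation'' and moves on, treating it as a standard fact from the theory of newforms. Your outline --- reduce to prime powers by multiplicativity, use the three-term recursion $\lambda_f(p^{k+1}) = \lambda_f(p)\lambda_f(p^k) - \lambda_f(p^{k-1})$ at good primes, and use the degenerate Euler factor $\lambda_f(q^k)=\lambda_f(q)^k$ at the bad prime $q$ --- is exactly the standard argument and is fully rigorous. The only small remark is that your operator identity $T_mT_n=\sum T_{mn/d^2}$ as written omits the scalar $d^{k-1}$ that appears in the un-normalized weight-$k$ Hecke relation; but since you immediately pass to the eigenvalue recursion in the analytic normalization (where that factor is absorbed), this is cosmetic and does not affect the argument.
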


Let $S(m,n;c)$ be the Kloosterman sum,
\begin{equation*}
S(m,n;c)=\sideset{}{^*}\sum_{u(\text{mod}\
c)}e\Big(\frac{mu+n\overline{u}}{c}\Big),
\end{equation*}
 and given a character $\chi$ we denote by $S_\chi(m,n;c)$ the hybrid Gauss-Kloosterman sum,
\[
S_\chi(m,n;c)=\sideset{}{^*}\sum_{u(\text{mod}\
c)}\chi(u)e\Big(\frac{mu+n\overline{u}}{c}\Big).
\]

\begin{lemma}\label{Petersson}
For $m,n\geq1$, we have
\begin{equation*}
\sideset{}{^h}\sum_{f\in S_2^*(q)}\lambda_{f}(m)\lambda_{f}(n)=\mathbf{1}_{m=n}-\frac{2\pi}{ q}\sum_{c\geq1}\frac{S(m,n;cq)}{c}J_{1}\Big(\frac{4\pi\sqrt{mn}}{cq}\Big).
\end{equation*}
As a result,
\[
\sideset{}{^h}\sum_{f\in S_2^*(q)}\lambda_{f}(m)\lambda_{f}(n)=\mathbf{1}_{m=n}+O_\varepsilon\big( q^{-3/2}(m,n,q)^{1/2}(mn)^{1/2+\varepsilon}\big).
\]
\end{lemma}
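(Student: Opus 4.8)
The plan is to recognize this as the classical Petersson trace formula in weight $2$ and prime level, followed by a routine consequence of Weil's bound; here is how I would argue.

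The identity itself comes from the standard Petersson formula for the full space $S_2(\Gamma_0(q))$: one introduces the holomorphic Poincaré series $P_m\in S_2(\Gamma_0(q))$ and computes $\langle P_m,P_n\rangle$ in two ways—spectrally, by expanding $P_m$ in an orthonormal basis and using the unfolding identity $\langle g,P_m\rangle=\frac{\Gamma(1)}{4\pi m}\hat g(m)$, and geometrically, by unfolding the defining sum for $P_m$ against the Fourier expansion of $P_n$, which yields $\delta_{mn}$ together with a sum over moduli $c\equiv 0\ (\mathrm{mod}\ q)$ of Kloosterman sums $S(m,n;c)$ against a Bessel integral equal to $J_1$—and comparing the two gives the trace formula in its usual normalization (this is classical, see e.g.\ [\textbf{\ref{ILS}}]). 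The point special to our setting is that, $q$ being prime, the oldform subspace of $S_2(\Gamma_0(q))$ could only come from level $1$, and $S_2(\mathrm{SL}_2(\Z))=0$; hence $S_2(\Gamma_0(q))=S_2^*(q)$ and $\{f/\sqrt{\langle f,f\rangle}:f\in S_2^*(q)\}$ is an orthonormal basis. With the paper's normalization $f(z)=\sum_n\sqrt n\,\lambda_f(n)e(nz)$, $\lambda_f(1)=1$, $\lambda_f(n)\in\mathbb R$, the $m$-th Fourier coefficient of $f/\sqrt{\langle f,f\rangle}$ is $\sqrt m\,\lambda_f(m)/\sqrt{\langle f,f\rangle}$, so the spectral side equals $\sqrt{mn}\sum_{f\in S_2^*(q)}\lambda_f(m)\lambda_f(n)/\langle f,f\rangle$; dividing through by $4\pi\sqrt{mn}$ rewrites it as $\sideset{}{^h}\sum_f\lambda_f(m)\lambda_f(n)$ with $\omega_f=1/4\pi\langle f,f\rangle$, while the elementary weight-$2$ constants ($i^{-2}=-1$, $(4\pi)/\Gamma(1)=4\pi$) together with the substitution $c\mapsto cq$ turn the geometric side into $\mathbf 1_{m=n}-\frac{2\pi}{q}\sum_{c\ge1}\frac{S(m,n;cq)}{c}J_1\big(\frac{4\pi\sqrt{mn}}{cq}\big)$, as claimed.

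For the corollary I would estimate the $c$-sum trivially, using $|J_1(x)|\le x/2$ for all $x\ge0$ (from Poisson's integral representation of $J_1$) and Weil's bound $|S(m,n;cq)|\le(m,n,cq)^{1/2}(cq)^{1/2}\tau(cq)$, where it is important that $\tau(cq)\le 2\tau(c)$ as $q$ is prime, so that no factor $q^\eps$ is incurred. This gives
\[
\frac{2\pi}{q}\sum_{c\ge1}\frac{|S(m,n;cq)|}{c}\Big| J_1\Big(\tfrac{4\pi\sqrt{mn}}{cq}\Big)\Big|\ll_\eps\frac{\sqrt{mn}}{q^{3/2}}\sum_{c\ge1}\frac{(m,n,cq)^{1/2}}{c^{3/2-\eps}}\ll_\eps q^{-3/2}(m,n,q)^{1/2}(mn)^{1/2+\eps},
\]
where the last step uses $(m,n,cq)^{1/2}\le(m,n,q)^{1/2}(m,n,c)^{1/2}$ (valid as $q$ is prime) together with the convergence $\sum_{c\ge1}(m,n,c)^{1/2}c^{-3/2+\eps}\ll_\eps\tau((m,n))\ll_\eps(mn)^\eps$.

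There is no genuine obstacle here: the formula is classical and its proof is a textbook computation. The only points needing any care are bookkeeping the weight-$2$ normalization constants, the (immediate) observation that there are no oldforms at prime level in weight $2$, and—in the corollary—retaining the sharp dependence on $(m,n,q)$ while avoiding a spurious power of $q$.
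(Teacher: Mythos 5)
Your proposal is correct and matches the paper's approach: the authors likewise invoke the classical Petersson trace formula for the first identity (your extra observations—no oldforms at prime level in weight $2$, and the weight-$2$ normalization constants—are exactly the implicit bookkeeping behind "a particular case of Petersson's trace formula") and deduce the estimate from $J_1(x)\ll x$ together with the Weil bound, just as you do. Your more careful treatment of $\tau(cq)\le 2\tau(c)$ and $(m,n,cq)\le(m,n,c)(m,n,q)$ simply fills in the routine details the paper leaves to the reader.
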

\begin{proof}
The first statement is a particular case of Petersson’s trace formula. The last estimate follows easily from the bound $J_1(x)\ll x$ and the Weil bound on Kloosterman sums.
\end{proof}

\begin{lemma}\label{Nelsonlemma}
Let $\chi$ be a character modulo $d$. Let $c=c_1c_2$ and $m=m_1m_2$, where $c_1m_1|d^\infty$ and $(c_2m_2,d)=1$. Then $S_{\chi}(m,0;c)$ vanishes unless $dm_1 = c_1$, and in that case we have
\begin{equation*}
S_{\chi}(m,0;c)=m_1\tau(\chi)\chi(c_2m_2)\sum_{\substack{r|(c_2,m_2)}}\mu\Big(\frac{c_2}{r}\Big)r.
\end{equation*}
\end{lemma}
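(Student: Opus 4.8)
The plan is to exploit the multiplicativity of twisted Ramanujan-type sums across the coprime factorisation $c=c_1c_2$ and to identify the resulting $c_1$-factor as a degenerate Gauss sum. Since every prime dividing $c_1$ divides $d$ while $(c_2,d)=1$, we have $(c_1,c_2)=1$; moreover $S_\chi(m,0;c)$ is only well defined (independently of the choice of residues) when $d\mid c$, so we may assume $d\mid c_1$. Parametrising the variable by the Chinese Remainder Theorem as $u\equiv u_1c_2\overline{c_2}+u_2c_1\overline{c_1}\pmod c$, with bars denoting inverses modulo the relevant factor, $u_1$ running over reduced residues mod $c_1$ and $u_2$ over reduced residues mod $c_2$, we obtain $e(mu/c)=e(m\overline{c_2}u_1/c_1)e(m\overline{c_1}u_2/c_2)$ and, because $d\mid c_1$, also $\chi(u)=\chi(u_1)$. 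Hence $S_\chi(m,0;c)=T_1T_2$ with
\[
T_1=\sideset{}{^*}\sum_{u_1\,(\mathrm{mod}\,c_1)}\chi(u_1)\,e\Big(\frac{m\overline{c_2}u_1}{c_1}\Big),\qquad
T_2=\sideset{}{^*}\sum_{u_2\,(\mathrm{mod}\,c_2)}e\Big(\frac{m\overline{c_1}u_2}{c_2}\Big).
\]

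For $T_2$, the change of variables $u_2\mapsto c_1u_2$ (admissible since $(c_1,c_2)=1$) turns it into the ordinary Ramanujan sum $\sum_{r\mid(c_2,m)}\mu(c_2/r)r$; as the primes of $m_1$ divide $d$ and are therefore coprime to $c_2$, one has $(c_2,m)=(c_2,m_2)$, so $T_2=\sum_{r\mid(c_2,m_2)}\mu(c_2/r)r$, which is exactly the factor appearing in the statement.

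For $T_1$, put $c_1=dt$ and sort $u_1$ by its residue $a\pmod d$. Since every prime of $c_1$ divides $d$, the condition $(u_1,c_1)=1$ is equivalent to $(a,d)=1$, and for each such $a$ the $u_1\equiv a\pmod d$ in a reduced system modulo $c_1$ are precisely $u_1=a+dw$ with $w\pmod t$; the sum over $w$ is a complete geometric sum, equal to $t$ when $t\mid m\overline{c_2}$ (equivalently $t\mid m$, as $t\mid c_1$ is coprime to $\overline{c_2}$) and to $0$ otherwise. When $t\mid m$, the remaining sum over $a$ is $\sum_{(a,d)=1}\chi(a)e\big((m/t)\overline{c_2}\,a/d\big)=\overline{\chi}\big((m/t)\overline{c_2}\big)\tau(\chi)$ by the standard evaluation of Gauss sums attached to the primitive character $\chi$ (which is primitive in our application, $\chi=\psi$); this identity holds for every argument and is zero precisely when the argument is not coprime to $d$. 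Now $t\mid m$ forces $t\mid m_1$ (primes of $t$ divide $d$, hence are coprime to $m_2$), and then $\overline{\chi}(m/t)\neq0$ forces $m_1/t$ coprime to $d$; since every prime of $m_1$ divides $d$, this means $t=m_1$, i.e. $c_1=dm_1$. In that case $T_1=m_1\,\overline{\chi}(m_2)\chi(c_2)\tau(\chi)=m_1\chi(c_2m_2)\tau(\chi)$, using that $\psi$ is real, and $S_\chi(m,0;c)=T_1T_2$ yields the claimed formula; if $c_1\neq dm_1$, one of the two vanishing mechanisms above applies and $S_\chi(m,0;c)=0$.

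The computation is essentially mechanical. The one genuinely delicate point is that when $\gcd(t,d)>1$ the variable $u_1$ modulo $c_1=dt$ does not split by the Chinese Remainder Theorem into independent residues modulo $d$ and modulo $t$, which is why $T_1$ is handled by the two-step ``residue $a\pmod d$, then geometric sum in $w$'' argument rather than a naive factorisation; beyond that, one must only keep track of the fact that it is the primitivity of $\chi$ that forces the degenerate Gauss sum — and hence $S_\chi(m,0;c)$ — to vanish unless $c_1=dm_1$ exactly.
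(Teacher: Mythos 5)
Your proof is correct. Note, though, that the paper does not prove this lemma at all: it simply cites Lemma 2.7 of Nelson's paper [\textbf{\ref{N}}], so your argument is a self-contained replacement rather than a variant of the paper's proof. Your route — splitting $u$ by CRT across the coprime factors $c_1,c_2$, evaluating the $c_2$-part as an ordinary Ramanujan sum, and handling the $c_1$-part by the two-step ``residue $a\pmod d$, geometric sum over $w\pmod t$'' device followed by the primitive-character identity $\sum_{a\,(\mathrm{mod}\ d)}\chi(a)e(na/d)=\overline{\chi}(n)\tau(\chi)$ — is exactly the standard way to verify Nelson's computation, and all the delicate points check out: the equivalence $(u_1,c_1)=1\Leftrightarrow(u_1,d)=1$ does use $d\mid c_1$ together with $c_1\mid d^{\infty}$ (so that $\mathrm{rad}(c_1)=\mathrm{rad}(d)$), which you have in hand; $t\mid m\overline{c_2}\Leftrightarrow t\mid m\Leftrightarrow t\mid m_1$ and the coprimality forcing $t=m_1$ are right; and the two vanishing mechanisms indeed cover every case with $c_1\neq dm_1$ (the case $d\nmid c$ being excluded by well-definedness, as in the paper's applications where $d\mid c$). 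You are also right to flag the two hypotheses implicit in the statement: $\chi$ must be primitive (the identity above, including its vanishing when $(n,d)>1$, fails otherwise, and the lemma is false for, say, principal characters), and the exact form $\chi(c_2m_2)$ rather than $\chi(c_2)\overline{\chi}(m_2)$ requires $\chi$ real — both conditions hold for the characters used in the paper, so your derivation matches the stated formula there.
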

\begin{proof}
See [\textbf{\ref{N}}; Lemma 2.7].
\end{proof}

The next lemma is the approximate functional equation for $\Lambda_{f,\psi}^{(k)}(1/2)$.

\begin{lemma}\label{afe}
Let $k\in\mathbb{N}$. Let $G(u)$ be an even entire function of rapid decay in any fixed strip $|\emph{Re}(u)| \leq C$ satisfying $G(0)=1$, $G(1)=0$ and $G^{(j)}(0)=0$ for $1\leq j\leq k$. Let
\begin{equation}\label{formulaV}
V_k(x)=\frac{1}{2\pi i}\int_{(1)}G(u)\Gamma(1+u)^2x^{-u}\frac{du}{u^{k+1}}.
\end{equation}
Then we have
\begin{align*}
\Lambda_{f,\psi}^{(k)}\Big(\frac{1}{2}\Big)=k!\big(1+(-1)^{k+1}\psi(q)\big)\sum_{(d,q)=1}\frac{\psi(d)}{d}\sum_{n}\frac{(1\star\psi)(n)\lambda_f(n)}{\sqrt{n}}V_{k}\Big(\frac{d^2n}{Q}\Big),
\end{align*}
where
\[
Q=\frac{qD}{4\pi^2}.
\]
\end{lemma}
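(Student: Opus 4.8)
The plan is to derive the formula from the functional equation \eqref{fe} via a standard contour-shift argument, taking care to incorporate the $k$-th derivative. First I would start from the integral representation
\[
\frac{1}{2\pi i}\int_{(2)}\Lambda_{f,\psi}\Big(\frac12+u\Big)G(u)\frac{du}{u^{k+1}},
\]
where $G$ is the prescribed even entire weight function. Moving the contour to $\mathrm{Re}(u)=-2$ and picking up the pole at $u=0$ of order $k+1$, the residue contributes $\tfrac{1}{k!}\Lambda_{f,\psi}^{(k)}(1/2)$ (using $G(0)=1$ and $G^{(j)}(0)=0$ for $1\le j\le k$, so that only the top derivative of $\Lambda_{f,\psi}$ survives in the residue computation of $\frac{d^k}{du^k}[\Lambda_{f,\psi}(\tfrac12+u)G(u)]\big|_{u=0}$). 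The condition $G(1)=0$ is there to kill the pole that would otherwise arise from $L(f\otimes\psi,s)$ or from the shifted contour interacting with poles of $\Gamma$; in fact $G(1)=0$ ensures no spurious pole at $u=1$.

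Next I would apply the functional equation \eqref{fe}, $\Lambda_{f,\psi}(\tfrac12+u) = -\psi(q)\Lambda_{f,\psi}(\tfrac12-u)$, to the shifted integral on $\mathrm{Re}(u)=-2$, and change variables $u\mapsto -u$. Because $G$ is even and $u^{k+1}$ picks up a sign $(-1)^{k+1}$, the shifted integral equals $(-1)^{k+1}\cdot(-\psi(q))$ times the original integral on $\mathrm{Re}(u)=2$. Rearranging gives
\[
\frac{1}{k!}\Lambda_{f,\psi}^{(k)}\Big(\frac12\Big) = \big(1+(-1)^{k+1}\psi(q)\big)\cdot\frac{1}{2\pi i}\int_{(2)}\Lambda_{f,\psi}\Big(\frac12+u\Big)G(u)\frac{du}{u^{k+1}}.
\]
Then I would expand $\Lambda_{f,\psi}(\tfrac12+u) = (\sqrt{q}D/2\pi)^u\,\Gamma(1+u)^2\,L(f,\tfrac12+u)L(f\otimes\psi,\tfrac12+u)$ — recalling from the completed definitions that $\Lambda(f,\tfrac12+u)\Lambda(f\otimes\psi,\tfrac12+u)$ has archimedean factor $\Gamma(1+u)^2$ and conductor factor $q^{u/2}(qD^2)^{u/2}\cdot(2\pi)^{-2u}\cdot$const — and Dirichlet-expand the product of $L$-functions. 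The product $L(f,s)L(f\otimes\psi,s) = \sum_n (1\star\psi)(n)\lambda_f(n) n^{-s}$ up to the diagonal Hecke convolution; more precisely one writes $\lambda_f(a)\psi(b)\lambda_f(b)$ and applies Lemma \ref{ortho} to get the coefficient $\sum_{d\mid(a,b),(d,q)=1}\lambda_f(ab/d^2)$, which after setting $n=ab/d^2$ reorganizes into the stated double sum over $(d,q)=1$ and over $n$, with $(1\star\psi)(n)\lambda_f(n)/\sqrt n$ and the $\psi(d)/d$ factor, and with the variable in $V_k$ being $d^2 n/Q$ where $Q=qD/4\pi^2$. Interchanging sum and integral (justified by absolute convergence on $\mathrm{Re}(u)=2$ using $\lambda_f(n)\ll n^\varepsilon$) turns the $u$-integral into exactly $V_k(d^2n/Q)$ as defined in \eqref{formulaV}, and multiplying through by $k!$ yields the claim.

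The main obstacle — or rather the one point requiring genuine care rather than routine bookkeeping — is the combinatorial reorganization of the Dirichlet coefficients of $L(f,s)L(f\otimes\psi,s)$ via the Hecke relation into the clean double sum over $d$ and $n$ with coefficient $(1\star\psi)(n)$, while simultaneously tracking how the conductor $\sqrt q D$ and the factor $(2\pi)^{-2u}$ combine into the single parameter $Q = qD/4\pi^2$ inside $V_k$. One must check that the $d$-sum range is genuinely unrestricted (all $(d,q)=1$) and that the coprimality conditions from Lemma \ref{ortho} do not introduce extra factors; the fact that $(D,q)=1$ and that $q$ is prime makes the divisibility conditions $(d,q)=1$ and the primitivity of $f\otimes\psi$ cooperate. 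The analytic input — rapid decay of $G$, shifting contours, absolute convergence — is entirely standard, as is the verification that $V_k(x)$ decays rapidly for $x$ large and behaves like a constant (times a power of $\log(1/x)$) for $x$ small, so I would not belabor those points.
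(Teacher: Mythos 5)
Your proposal is correct and follows essentially the same route as the paper: contour representation of $\Lambda_{f,\psi}(1/2+u)G(u)u^{-(k+1)}$, residue at $u=0$ via the vanishing conditions on $G$, reflection through the functional equation \eqref{fe} to produce the factor $1+(-1)^{k+1}\psi(q)$, and then the Hecke relation of Lemma \ref{ortho} to reorganize $\sum_{m,n}\psi(n)\lambda_f(m)\lambda_f(n)$ into the sum over $(d,q)=1$ and $n$ with coefficient $(1\star\psi)(n)$ and argument $d^2n/Q$. One small correction: the hypothesis $G(1)=0$ plays no role here (since $\Lambda_{f,\psi}$ is entire there is no pole at $u=1$ to cancel); the paper keeps it only to kill certain Gamma-factor poles arising later in the off-off-diagonal analysis of Section \ref{afeood}, and note also that the conductor factor is $(\sqrt{q}/2\pi)^u(\sqrt{q}D/2\pi)^u=Q^u$, as your parenthetical computation (but not your displayed expression) records.
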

\begin{proof}
As $G(0)=1$ and $G^{(j)}(0)=0$ for $1\leq j\leq k$, by Cauchy's theorem we have
\[
\frac{1}{2\pi i}\int_{(1)}G(u)\Lambda_{f,\psi}\Big(\frac{1}{2}+u\Big)\frac{du}{u^{k+1}}=\frac{\Lambda_{f,\psi}^{(k)}(1/2)}{k!}+\frac{1}{2\pi i}\int_{(-1)}G(u)\Lambda_{f,\psi}\Big(\frac{1}{2}+u\Big)\frac{du}{u^{k+1}}.
\]
By a change of variables $u\rightarrow-u$ and using \eqref{fe}, we then obtain
\begin{displaymath}
\frac{\Lambda_{f,\psi}^{(k)}(1/2)}{k!}=\frac{1+(-1)^{k+1}\psi(q)}{2\pi i}\int_{(1)}G(u)\Lambda_{f,\psi}\Big(\frac{1}{2}+u\Big)\frac{du}{u^{k+1}}.
\end{displaymath}
Writing $\Lambda_{f,\psi}$ in terms of Dirichlet series and then integrating term-by-term gives
\begin{align*}
\Lambda_{f,\psi}^{(k)}\Big(\frac{1}{2}\Big)=k!\big(1+(-1)^{k+1}\psi(q)\big)\sum_{m,n}\frac{\psi(n)\lambda_f(m)\lambda_f(n)}{\sqrt{mn}}V_{k}\Big(\frac{mn}{Q}\Big).
\end{align*}
The lemma now follows by applying Lemma \ref{ortho}.
\end{proof}

\noindent\textbf{Remarks.}
\begin{itemize}
\item We can move the line of integration in \eqref{formulaV} to $\text{Re}(u)=u_0$ for any fixed $u_0>0$ without changing the value of $V_k(x)$.
\item For $k\leq 2$ an admissible choice of $G$ in Lemma \ref{afe} is
\[
G(u)=e^{u^6}(u^4-1)^2,
\] say, but there is no need to specify $G$. We have not used the condition $G(1)=0$, but this is to cancel out certain poles of some Gamma functions, which appear in the evaluation of the off-off-diagonal terms in Section \ref{afeood}. This substantially simplifies our later calculations.
\end{itemize}

The next result is fundamental to our work. It is a quantitative statement of ``lacunarity''.

\begin{lemma}\label{lem:basic lac estimate}
For any $x \geq D^2$ we have
\begin{align*}
\sum_{D^2 < n \leq x}\frac{(1\star\psi)(n)}{n}\leq L(1,\psi)(\log x).
\end{align*}
\end{lemma}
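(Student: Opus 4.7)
The plan is to prove the estimate by first establishing a sharp asymptotic for $S(t):=\sum_{n\le t}(1\star\psi)(n)$ and then applying Stieltjes integration by parts. The key input is
\[
S(t)=tL(1,\psi)+O\big(\sqrt{t}\,D^{1/2}\log D\big),
\]
which follows from Dirichlet's hyperbola method: writing $(1\star\psi)(n)=\sum_{dm=n}\psi(d)$ and splitting the double sum $\sum_{dm\le t}\psi(d)$ symmetrically at $\sqrt{t}$, the main piece $t\sum_{d\le\sqrt{t}}\psi(d)/d=tL(1,\psi)+O(\sqrt{t}\,D^{1/2}\log D)$ comes from the range $d\le\sqrt{t}$ (using Polya--Vinogradov combined with partial summation to write $\sum_{d\le T}\psi(d)/d=L(1,\psi)+O(D^{1/2}\log D/T)$), while the other contributions $\sum_{m\le\sqrt{t}}\Psi(t/m)$ and the diagonal correction $\lfloor\sqrt{t}\rfloor\Psi(\sqrt{t})$ are each $O(\sqrt{t}\,D^{1/2}\log D)$ directly from $|\Psi(y)|=|\sum_{d\le y}\psi(d)|\ll D^{1/2}\log D$.

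Applying Stieltjes integration by parts yields
\[
\sum_{D^2<n\le x}\frac{(1\star\psi)(n)}{n}=\frac{S(x)}{x}-\frac{S(D^2)}{D^2}+\int_{D^2}^x\frac{S(u)}{u^2}\,du.
\]
Writing $S(u)=uL(1,\psi)+E(u)$ with $|E(u)|\ll\sqrt{u}\,D^{1/2}\log D$, the $L(1,\psi)$-contributions telescope to $L(1,\psi)\log(x/D^2)$, while the boundary errors satisfy $|E(x)|/x,\,|E(D^2)|/D^2\ll D^{-1/2}\log D$ (using $x\ge D^2$), and the integrated error is bounded by $D^{1/2}\log D\cdot\int_{D^2}^x u^{-3/2}\,du\ll D^{-1/2}\log D$. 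Hence
\[
\sum_{D^2<n\le x}\frac{(1\star\psi)(n)}{n}=L(1,\psi)\log(x/D^2)+O\big(D^{-1/2}\log D\big).
\]

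Since $\log(x/D^2)\le\log x$, the main term already satisfies the desired bound. What remains is to absorb the error $O(D^{-1/2}\log D)$ into the slack $2L(1,\psi)\log D$ between $L(1,\psi)\log(x/D^2)$ and $L(1,\psi)\log x$; this reduces to the lower bound $L(1,\psi)\gg D^{-1/2}$, which is immediate from Siegel's (ineffective) theorem $L(1,\psi)\gg_\varepsilon D^{-\varepsilon}$ given that $D$ is large. The main obstacle is thus obtaining the hyperbola-method asymptotic for $S(t)$ with the sharp $D^{1/2}$-dependence on the modulus: this is what ensures that, after dividing by $D^2$ and integrating from $D^2$ to $x$, the cumulative error remains $O(D^{-1/2}\log D)$ rather than anything polynomial in $\log x$. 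The subsequent Abel summation and bookkeeping are routine.
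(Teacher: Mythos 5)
Your derivation of
\[
\sum_{D^2<n\le x}\frac{(1\star\psi)(n)}{n}=L(1,\psi)\log(x/D^2)+O\big(D^{-1/2}\log D\big)
\]
via the hyperbola method, P\'olya--Vinogradov and partial summation is correct, but the final absorption step has a genuine gap. The slack you have to play with is exactly $2L(1,\psi)\log D$, and the lower bound $L(1,\psi)\gg D^{-1/2}$ makes this only $\gg D^{-1/2}\log D$ --- the \emph{same} order as your error term. A clean inequality with no implied constant, $\le L(1,\psi)\log x$, cannot be deduced from ``error $O(D^{-1/2}\log D)$'' plus ``$L(1,\psi)\gg D^{-1/2}$'': with unspecified constants on both sides the comparison can go either way, and a naive accounting (P\'olya--Vinogradov constant, the factor picked up in the hyperbola pieces and in the partial summation, versus $L(1,\psi)\ge \pi D^{-1/2}$ from the class number formula) does not obviously come out in your favour. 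This is exactly why the paper does not run the plain hyperbola/P--V argument: it quotes [IK, (22.109)], whose error term carries a $D^{1/4}$ rather than $D^{1/2}$ dependence, so that after subtracting the formula at $x$ and at $D^2$ the error is $O(D^{-3/4}\log D)$, of genuinely smaller order than the slack $\gg D^{-1/2}\log D$; the absorption is then automatic for $D$ large, with no constant-chasing.

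To close the gap you would need one of the following: (i) a sharper estimate for $\sum_{n\le t}(1\star\psi)(n)$ or for the logarithmic average, with $D^{1/4}$-type dependence on the conductor (this is what the cited IK estimate provides, via a finer treatment of character sums than P\'olya--Vinogradov); (ii) explicit constants throughout your argument, using the refined P\'olya--Vinogradov bound for odd characters together with $L(1,\psi)=2\pi h/(w\sqrt D)\ge \pi D^{-1/2}$ --- feasible perhaps, but delicate and not done in your write-up; or (iii) Siegel's theorem with an exponent strictly below $1/2$, e.g.\ $L(1,\psi)\gg_{\varepsilon} D^{-1/4}$, which does absorb $O(D^{-1/2}\log D)$ for $D$ large, but renders the lemma (and everything downstream of it) ineffective, whereas the paper deliberately uses only the effective bound $L(1,\psi)\gg D^{-1/2}$. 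Relatedly, invoking Siegel merely to get $L(1,\psi)\gg D^{-1/2}$ is overkill --- that bound is effective and elementary for real primitive characters; the issue is that, at the strength you reduced to, it is exactly too weak for the error term your method produces.
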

\begin{proof}
Apply [\textbf{\ref{IK}}; (22.109)] twice and subtract the expressions. This gives
\begin{align*}
\sum_{D^2 < n \leq x}\frac{(1\star\psi)(n)}{n}\leq L(1,\psi)(\log x) - L(1,\psi)(\log D^2) + O \left(D^{-3/4}(\log D) \right) \leq L(1,\psi)(\log x),
\end{align*}
since $L(1,\psi) \gg D^{-1/2}$.
\end{proof}

We often encounter sums where $(1\star \psi)$ is twisted by divisor functions. The next lemma provides a bound for such sums.

\begin{lemma}\label{lem:updated lacunarity lemma}
%
Let $k\in\mathbb{N}$, $B>0$ and $N\geq 8$ be fixed. Let $D^{8} \leq Z \leq D^N$ and define $T = \exp((\log Z)(\log \log Z)^2)$. Then we have
\begin{align*}
\sum_{\substack{D^{8} < n \leq T^B}} \frac{(1\star\psi)(n)\tau(n)^k}{n} \ll_{\varepsilon,A,B,k,N} L(1,\psi)(\log D)^{2^{k+1}+1+\varepsilon} + (\log D)^{-A}
\end{align*}
for any fixed $A>0$.
\end{lemma}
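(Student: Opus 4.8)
The plan is to reduce the divisor-twisted sum to the basic lacunarity estimate of Lemma \ref{lem:basic lac estimate} by a dyadic decomposition together with a Shiu-type upper bound for the non-negative multiplicative function $(1\star\psi)(n)\tau(n)^k$ on short-ish intervals, and then sum the contributions. First I would split the range $D^8 < n \le T^B$ into $O((\log D)(\log\log Z)^2)$ dyadic blocks $Y < n \le 2Y$ with $D^8 \le Y \le T^B$. On each block I want an inequality of the shape
\begin{align*}
\sum_{Y < n \le 2Y} \frac{(1\star\psi)(n)\tau(n)^k}{n} \ll_{\varepsilon,k} (\log Y)^{\varepsilon}(\log\log Y)^{O(1)} \sum_{Y < n \le 2Y} \frac{(1\star\psi)(n)}{n},
\end{align*}
obtained by absorbing the $\tau(n)^k$ weight: since $(1\star\psi) = 1 \star \psi$ with $\psi$ real, and since $\tau(n)^k$ is itself a bounded-in-the-mean multiplicative function, a standard argument (either convolving $\tau^k$ out as $\tau^k \le \tau_{2^k}$ and using that $\tau_{2^k}(ab)\le \tau_{2^k}(a)\tau_{2^k}(b)$, or a direct Shiu/Rankin bound) shows the average of $(1\star\psi)\tau^k$ over a dyadic block exceeds the average of $(1\star\psi)$ by at most a factor $(\log Y)^{2^k-1}$ up to $(\log\log)$ losses. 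The exponent $2^{k+1}+1+\varepsilon$ in the claim is consistent with this: one factor of $2^k$ from $\tau(n)^k$ on the $1$-part of $1\star\psi$ and another $2^k$ from the $\psi$-part of the convolution, plus the $+1$ from Lemma \ref{lem:basic lac estimate} itself.

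Next I would feed in Lemma \ref{lem:basic lac estimate}, which gives $\sum_{Y<n\le 2Y}\frac{(1\star\psi)(n)}{n}\le L(1,\psi)\log(2Y)\ll L(1,\psi)\log(T^B) \ll_{B,N} L(1,\psi)(\log Z)(\log\log Z)^2 \ll_{B,N} L(1,\psi)(\log D)(\log\log D)^{2}$, using $Z\le D^N$ so $\log T \asymp (\log Z)(\log\log Z)^2 \asymp_N (\log D)(\log\log D)^2$. Summing this bound over the $\ll (\log D)(\log\log D)^2$ dyadic blocks, and carrying along the $(\log Y)^{2^k-1}\ll(\log D)^{2^k-1+\varepsilon}$ factor from the divisor weight, gives a total of
\begin{align*}
\ll_{\varepsilon,B,k,N} L(1,\psi)(\log D)^{2^k - 1 + \varepsilon}\cdot (\log D)(\log\log D)^2 \cdot L(1,\psi)^{0}\cdots
\end{align*}
— here I need to be a little more careful: the $\tau$-weight must be distributed between the two convolution factors, and redoing the Shiu bound directly on $(1\star\psi)\tau^k$ rather than block-by-block with a crude factorization is cleaner. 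Concretely, I would instead prove the dyadic bound $\sum_{Y<n\le 2Y}\frac{(1\star\psi)(n)\tau(n)^k}{n}\ll_{\varepsilon,k} L(1,\psi)(\log Y)^{2^{k+1}}(\log\log Y)^{O(1)}$ in one shot: write $(1\star\psi)(n)\tau(n)^k = \sum_{ab=n}\psi(b)\tau(ab)^k$, use $\tau(ab)^k\le\tau(a)^k\tau(b)^k$, pull out $\sum_b\frac{\psi(b)\tau(b)^k}{b}$ over $b\le 2Y$ (which by partial summation and the $\tau_{2^k+1}$ bound for $\sum_{b\le y}\psi(b)\tau(b)^k$ contributes at most $(\log Y)^{2^k-1+\varepsilon}$ since $\psi$ is non-principal — but crucially one cannot gain a power of $L(1,\psi)$ here without more work), then handle the remaining $a$-sum $\sum_{a\le 2Y/b}\frac{\tau(a)^k}{a}\ll(\log Y)^{2^k}$, and finally invoke Lemma \ref{lem:basic lac estimate} to pin down the genuinely $L(1,\psi)$-small range. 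The error term $(\log D)^{-A}$ accommodates the contribution of the initial segment $n\le D^2$ (where Lemma \ref{lem:basic lac estimate} does not directly apply) handled via the Polya–Vinogradov/GRH-free bound for character sums, as in the proof of Lemma \ref{lem:basic lac estimate}, and the trivial tail.

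The main obstacle is bookkeeping the distribution of the divisor weight $\tau(n)^k$ across the convolution $1\star\psi$ so that exactly one factor $L(1,\psi)$ survives (not two, not zero) while the $\tau$-losses stay at $(\log D)^{2^{k+1}}$ rather than something larger; naively bounding $\tau(ab)^k\le\tau(a)^k\tau(b)^k$ and separating the sums loses the savings from lacunarity on the $b$-variable, so one must keep $a$ and $b$ coupled through the constraint $ab\le T^B$ and apply Lemma \ref{lem:basic lac estimate} to the composite variable $n=ab$, absorbing $\tau(n)^k$ via a Rankin-type trick ($\tau(n)^k\le (\log Y)^{2^k}\cdot \tau(n)^k (\log Y)^{-2^k}$ is not enough; rather use that $\sum_{Y<n\le 2Y}\frac{(1\star\psi)(n)\tau(n)^k}{n}\le (\max_{Y<n\le 2Y}\cdots)$ is false too). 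The honest route is Shiu's theorem applied to the non-negative multiplicative function $g(n)=(1\star\psi)(n)\tau(n)^k$, which satisfies $g(p)\le (k+1+k)\cdots$, i.e. $g(p^\nu)\ll_{k,\varepsilon} p^{\varepsilon}$ and $\sum_{p\le x} g(p)/p \le (2^{k+1})\log\log x + O_k(1)$, yielding $\sum_{Y<n\le 2Y}g(n)\ll Y(\log Y)^{2^{k+1}-1}(\log\log Y)^{O(1)}$ on the block — but Shiu alone does not see the extra factor $L(1,\psi)$, so one must combine the Shiu bound on the $\tau$-part with Lemma \ref{lem:basic lac estimate} on the $1\star\psi$-part by a hybrid decomposition $g = (1\star\psi)\cdot \tau^k$ and an application of the Cauchy–Schwarz or a direct convolution identity, then re-sum dyadically with $\log\log Z \asymp_N \log\log D$. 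I expect this hybrid step to be where all the real work lies; once it is in place, summing $O((\log D)(\log\log D)^2)$ dyadic blocks each contributing $\ll L(1,\psi)(\log D)^{2^{k+1}+\varepsilon}$ finishes the proof after renaming $\varepsilon$.
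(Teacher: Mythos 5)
Your proposal does not close the crucial step, and you say so yourself: the point where the factor $L(1,\psi)$ must survive \emph{simultaneously} with the absorption of $\tau(n)^k$ is exactly where you defer to ``a hybrid decomposition'' via Shiu plus Cauchy--Schwarz ``or a direct convolution identity,'' and none of these works as stated. Shiu applied to $g(n)=(1\star\psi)(n)\tau(n)^k$ gives only $(\log Y)^{2^{k+1}-1}$ with no $L(1,\psi)$ at all; splitting the convolution as $\tau(ab)^k\le\tau(a)^k\tau(b)^k$ and separating the variables destroys the lacunarity, as you note; and Cauchy--Schwarz at best produces $L(1,\psi)^{1/2}$ (this is precisely the tensor-power loss the paper accepts in Lemma \ref{lem:lac sum over primes}, but it is not acceptable here, where the full first power of $L(1,\psi)$ is claimed). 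There is also a secondary defect in the dyadic skeleton: Lemma \ref{lem:basic lac estimate} bounds the \emph{cumulative} sum up to $x$ by $L(1,\psi)\log x$, so each dyadic block can only be bounded by the whole sum, and re-summing over the $\asymp\log T$ blocks costs an extra full power of $\log$, which exceeds the $\varepsilon$ in the exponent $2^{k+1}+1+\varepsilon$. (A minor point: there is no ``initial segment $n\le D^2$'' to handle, since the sum starts at $n>D^8$.)

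The paper resolves the tension you identified by a different mechanism, not by any hybrid of Shiu with Lemma \ref{lem:basic lac estimate}. One sets $Z_0=\exp((\log Z)(\log\log Z)^{-2})$ and factors $n=n_0n_1$ with $P^+(n_0)\le Z_0$ and $P^-(n_1)>Z_0$. The smooth part $n_0$, if large ($>D^4$), is killed by Rankin's trick; otherwise it only contributes a multiplicative loss $(\log Z_0)^{2^{k+1}}\le(\log Z)^{2^{k+1}}$ through an Euler product over $p\le Z_0$. After removing the squarefull part of $n_1$, one is left with a squarefree $m>D^2$ all of whose prime factors exceed $Z_0$, and the sum over $m$ is split according to $\omega(m)\le\varpi$ or $\omega(m)>\varpi$ with $\varpi=\log\log Z/(\log\log\log Z)^{1/2}$. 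In the first case $\tau(m)^k=2^{k\omega(m)}=(\log Z)^{o(1)}$, so the divisor weight is harmless and Lemma \ref{lem:basic lac estimate} supplies the single factor $L(1,\psi)(\log Z)^{1+\varepsilon}$; in the second case integers with so many prime factors all exceeding $Z_0$ are so rare (the typical such $m\le T^B$ has about $\log\log\log Z$ prime factors) that a $\sum_{t>\varpi}(C\log\log\log Z)^t/t!$ bound gives $\ll(\log Z)^{-A}$. This smooth/rough splitting combined with the anatomy-of-integers rarity argument is the idea missing from your write-up; without it, your argument as proposed does not yield the stated exponent or the full power of $L(1,\psi)$.
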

\begin{proof}
Let $\mathcal{T}$ denote the sum we wish to bound. Let $Z_0 = \exp((\log Z)(\log \log Z)^{-2})$ and factor $n = n_0 n_1$, where $P^+(n_0) \leq Z_0$ and $P^-(n_1) > Z_0$. Then
\begin{align*}
\mathcal{T}&\ll \sum_{\substack{D^{8} < n_0n_1 \leq T^B \\ p \mid n_0 \Rightarrow p \leq Z_0 \\ p \mid n_1 \Rightarrow p > Z_0}}\frac{(1\star\psi)(n_0)\tau(n_0)^k(1\star\psi)(n_1)\tau(n_1)^k}{n_0n_1}.
\end{align*}
Since $n > D^{8}$ we must have that one of $n_0$ or $n_1$ exceeds $D^{4}$. To bound the contribution from $n_0 > D^{4}$ we set $\delta =1/\log Z_0$ and use Rankin's trick to obtain
\begin{align*}
&\ll (\log \log Z)^{O_k(1)}\mathop{\sum}_{\substack{D^{4} < n_0 \leq T^B \\ p \mid n_0 \Rightarrow p \leq Z_0}}\frac{(1\star\psi)(n_0)\tau(n_0)^k}{n_0}\ll \frac{(\log \log Z)^{O_k(1)}}{D^{4\delta}}\prod_{p \leq Z_0}\left(1 + \frac{2^{k+1} p^\delta}{p} \right)\\
&\ll\frac{(\log \log Z)^{O_k(1)}}{D^{4\delta}}\prod_{p \leq Z_0}\left(1 + \frac{2^{k+1} e}{p} \right)\ll \frac{(\log Z_0)^{O_k(1)}}{D^{4\delta}}\ll_{A,k} (\log Z)^{-A}.
\end{align*}
This contribution is clearly acceptable, and henceforth we may assume that in $\mathcal{T}$ we have $n_1 > D^{4}$. Actually, it is technically convenient to factor $n_1 = ms$, say, where $m$ is squarefree, $s$ is squarefull and $(m,s)=1$. The contribution from $s > D^{2}$ is easily seen to be $O(Z_0^{-1})$, so we may assume $m > D^2$, and therefore
\begin{align}\label{eq:intermediate bound on mathfrak T in updated lacunarity lemma}
\mathcal{T}&\ll_{A,k} (\log Z_0)^{2^{k+1}}\mathop{\sum}_{\substack{D^2 < m \leq T^B \\ p \mid m \Rightarrow p > Z_0}}\frac{\mu^2(m)(1\star\psi)(m)\tau(m)^k}{m}+(\log Z)^{-A} .
\end{align}

We introduce a quantity
\begin{align*}
\varpi &:= \frac{\log\log Z}{(\log\log\log Z)^{1/2}}
\end{align*}
and bound the sum on $m$ in \eqref{eq:intermediate bound on mathfrak T in updated lacunarity lemma} according to whether $\omega(m) \leq \varpi$ or $\omega(m) > \varpi$. The motivation for this division is as follows. If $\omega(m)\leq \varpi$, then the divisor function $\tau(m)$ can be controlled easily because $m$ does not have so many prime factors. On the other hand, if $\omega(m) > \varpi$ then $m$ has many more prime factors than usual and this is therefore a rare event: the typical integer $m$ with $m \leq T^B$ and $P^-(m) > Z_0$ has $\approx \log\log \log Z$ prime factors.

We consider first the case where $\omega(m)\leq \varpi$. Since $m$ is squarefree we have $\tau(m) = 2^{\omega(m)}\leq 2^\varpi$, and therefore the contribution from those $m$ with $\omega(m)\leq \varpi$ is
\begin{align*}
\ll 2^{k\varpi}(\log Z)^{2^{k+1}}\sum_{D^2 < m \leq T^B}\frac{(1\star\psi)(m)}{m}\ll_{\varepsilon,B,k}L(1,\psi) (\log Z)^{2^{k+1}+1+\varepsilon},
\end{align*}
where the last inequality follows from Lemma \ref{lem:basic lac estimate}.

The contribution to $\mathcal{T}$ from those $m$ with $\omega(m)> \varpi$ is
\begin{align*}
&\ll (\log Z)^{O_k(1)}\sum_{\substack{m \leq T^B \\ p\mid m \Rightarrow p > Z_0 \\ \omega(m) > \varpi}}\frac{\mu^2(m)2^{\omega(m)(k+1)}}{m}\leq (\log Z)^{O_k(1)}\sum_{t > \varpi}\frac{1}{t!}\Bigg(\sum_{Z_0 < p \leq T^B}\frac{2^{k+1}}{p} \Bigg)^t\\
&=(\log Z)^{O_k(1)}\sum_{t > \varpi}\frac{1}{t!}\left(2^{k+3}\log\log\log Z + 2^{k+1}\log B + o_k(1) \right)^t\\
&\ll_{B,k}(\log Z)^{O_k(1)}\sum_{t > \varpi}\frac{(2^{k+4}\log\log\log Z)^t}{t!}\ll_{A,B,k}(\log Z)^{-A}.
\end{align*}
We therefore have
\begin{align*}
\mathcal{T}\ll_{\varepsilon,A,B,k} L(1,\psi)(\log Z)^{2^{k+1}+1+\varepsilon} + (\log Z)^{-A},
\end{align*}
which completes the proof.
\end{proof}

In order to bound some logarithmic derivatives we will need estimates for certain sums over primes. The key point is that lacunarity enables a small amount of savings over the trivial bound.

\begin{lemma}\label{lem:lac sum over primes}
For any $x \geq D$ we have
\begin{align*}
\sum_{\substack{p \leq x \\ \psi(p)=1}}\frac{\log p}{p}\ll \frac{\log\log(1/L(1,\psi)\log D)}{\log(1/L(1,\psi)\log D)}\log D + L(1,\psi)^{1/2} (\log x)^{3/2}.
\end{align*}
\end{lemma}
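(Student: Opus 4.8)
The plan is to split the sum over primes $p \leq x$ with $\psi(p)=1$ at a threshold $y$, to be chosen, treating the ranges $p \leq y$ and $y < p \leq x$ by different methods. For the tail $y < p \leq x$ we will not exploit $\psi(p) = 1$ at all; instead we dominate $\mathbf{1}_{\psi(p)=1}$ by $1$ and use partial summation together with the Rankin-type/lacunarity input already available. The cleanest route is to relate $\sum_{y < n \leq x} (1\star\psi)(n)/n$ to $\sum_{y < p \leq x, \psi(p)=1} (\log p)/p$: since $p$ with $\psi(p)=1$ satisfies $(1\star\psi)(p) = 2$, each such prime contributes a definite amount to the lacunarity sum of Lemma \ref{lem:basic lac estimate}. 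Concretely, by positivity of $(1\star\psi)$ and restricting the sum in Lemma \ref{lem:basic lac estimate} to primes, one gets $\sum_{y < p \leq x, \psi(p)=1} 1/p \ll L(1,\psi)\log x$ for $y \geq D^2$; feeding this into partial summation (using $\log p \leq \log x$) and optimizing yields the term $L(1,\psi)^{1/2}(\log x)^{3/2}$, after choosing $y$ of size roughly $\exp((\log x)^{1/2}/L(1,\psi)^{1/2})$ or similar. One has to be slightly careful because $\log p / p$ is not monotone in a way that makes Abel summation entirely free, but $t \mapsto (\log t)/t$ is eventually decreasing, so the standard manipulation goes through with an error absorbed into the stated bound.

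For the main range $p \leq y$ we use the elementary estimate $\sum_{p \leq y}(\log p)/p = \log y + O(1)$ together with the fact that $\sum_{p \leq y, \psi(p)=-1}(\log p)/p$ cannot be too small. Indeed, $\log L(s,\psi)$ has a logarithmic derivative whose prime part is $-\sum_p \psi(p)(\log p)/p^s + (\text{lower order})$, and the key mechanism is that if $\psi(p)=1$ happened too often among small primes, then $L(s,\psi)$ would be forced to be large (like $\zeta(s)$), contradicting the smallness of $L(1,\psi)$. This is exactly the "pretentious" input: the smallness of $L(1,\psi)$ forces $\psi$ to pretend to be $-1$ on a large logarithmic-density set of primes below $y$. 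Quantitatively one invokes a bound of the shape $\sum_{p \leq y}(1+\psi(p))(\log p)/p \ll \log(1/(L(1,\psi)\log D)) + \log y / \log(1/(L(1,\psi)\log D))$ valid for $y$ up to a power of $1/L(1,\psi)$; dividing the range $p\le y$ dyadically and summing the geometric-type contribution produces the factor $\log\log(1/(L(1,\psi)\log D))$ in the numerator. Since $2\mathbf{1}_{\psi(p)=1} \leq 1+\psi(p)$ pointwise, this controls $\sum_{p\le y,\psi(p)=1}(\log p)/p$, and with $\log y \asymp \log D$ (or a bounded multiple thereof coming from the threshold choice) we obtain the claimed first term.

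The main obstacle is the middle step: obtaining a good enough bound for $\sum_{p \leq y}(1+\psi(p))(\log p)/p$ that is simultaneously valid on a long enough range of $y$ and has the right dependence on $L(1,\psi)$. This is where the "lacunarity enables a small amount of savings over the trivial bound" remark bites: one cannot simply quote $\sum_{p\le y}(\log p)/p \ll \log y$, since that gives nothing, and one cannot push the range of $y$ arbitrarily far without the savings degrading. The delicate point is to track how the savings factor $1/\log(1/(L(1,\psi)\log D))$ interacts with the dyadic summation over the range $D \leq 2^j \leq y$, and to choose the split point $y$ so that the contributions of the two ranges balance against the target bound. I expect this optimization, and verifying that the constraints $x \geq D$ and $L(1,\psi)\log D = o(1)$ are enough to make $y$ a legitimate (large) parameter, to be the crux; the tail estimate via Lemma \ref{lem:basic lac estimate} is comparatively routine.
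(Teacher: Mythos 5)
There is a genuine gap, and it sits exactly at the crux you yourself identify: the small-prime range. Your tail treatment is essentially fine (in fact simpler than the paper's: since $(1\star\psi)(p)=2$ when $\psi(p)=1$, restricting Lemma \ref{lem:basic lac estimate} to primes gives $\sum_{D^2<p\leq x,\,\psi(p)=1}1/p\ll L(1,\psi)\log x$, hence $\ll L(1,\psi)(\log x)^2$ for the weighted sum, which is $\leq L(1,\psi)^{1/2}(\log x)^{3/2}$ once you note the lemma is trivial by Mertens when $L(1,\psi)\log x\geq 1$ --- a reduction you do not state; the paper instead squares the sum, a tensor-power trick, to get $\ll (L(1,\psi)\log x)^{1/2}$ directly). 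But for $p\leq y$ you \emph{invoke} an unproved ``pretentious'' bound $\sum_{p\leq y}(1+\psi(p))\frac{\log p}{p}\ll \log(1/(L(1,\psi)\log D))+\frac{\log y}{\log(1/(L(1,\psi)\log D))}$, and this is where the proof fails twice over. First, the shape is wrong: writing $\Lambda=\log(1/(L(1,\psi)\log D))$, when $L(1,\psi)$ is near its minimum $D^{-1/2}$ one has $\Lambda\asymp\log D$, so your bound gives only $O(\log D)$, whereas the lemma demands $\frac{\log\Lambda}{\Lambda}\log D\asymp\log\log D$ in that regime. Second, the claimed range of validity, ``$y$ up to a power of $1/L(1,\psi)$,'' does not reach $y\asymp D$ in the regime most relevant to the paper, namely $L(1,\psi)\asymp(\log D)^{-O(1)}$, where $1/L(1,\psi)$ is only polylogarithmic in $D$. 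There is also an internal inconsistency in the threshold: your tail analysis wants $y\approx\exp((\log x)^{1/2}L(1,\psi)^{-1/2})$ while your main-range analysis needs $\log y\asymp\log D$, and the vague ``dyadic summation produces the $\log\log$ factor'' does not correspond to an actual mechanism.

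For comparison, the paper's argument for $p\leq D$ contains the missing idea: set $\kappa=4\log\Lambda/\Lambda$ and split at $D^{\kappa}$. Below $D^{\kappa}$ the \emph{trivial} Mertens bound already gives $\ll\kappa\log D$, which is precisely the first term of the lemma (this is where the $\log\log$ in the numerator comes from --- the choice of $\kappa$, not a dyadic summation). For $D^{\kappa}<p\leq D$ one cannot get by with soft pretentiousness; the paper quotes Proposition 3.1 of Friedlander--Iwaniec [\textbf{\ref{FI13}}], which yields $\sum_{D^{\kappa}<p\leq D}(1\star\psi)(p)/p\ll\kappa^{-1}(L(1,\psi)\log D)^{\kappa/2}$, and the choice of $\kappa$ makes $(L(1,\psi)\log D)^{\kappa/2}=\Lambda^{-2}$, so this range contributes $\ll\frac{\log D}{\Lambda\log\Lambda}$, dominated by the first term. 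Some concrete input of this strength (or a proof of it) is indispensable; without it your proposal assumes the heart of the lemma.
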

\begin{proof}
We use lacunarity to reduce the range of $p$ to $p\leq D$. Since our basic lacunarity estimate Lemma \ref{lem:basic lac estimate} does not quite apply in this shorter range, we use a crude ``tensor power'' trick. We have
\begin{align*}
\Big(\sum_{\substack{D < p\leq x \\ \psi(p)=1}} \frac{1}{p}\Big)^2&\leq \sum_{D < p_1,p_2\leq x}\frac{\big(1+\psi(p_1)\big)\big(1+\psi(p_2)\big)}{p_1p_2}\leq \sum_{p > \sqrt{D}}\frac{4}{p^2} + \sum_{D^2 < n\leq x^2}\frac{(1\star\psi)(n)}{n}\\
&\ll D^{-1/2}(\log D)^{-1} + L(1,\psi)(\log x)\ll L(1,\psi)(\log x),
\end{align*}
where in the last inequality we have used the lower bound $L(1,\psi)\gg D^{-1/2}$. Therefore
\begin{align*}
\sum_{\substack{p \leq x \\ \psi(p)=1}}\frac{\log p}{p}=\sum_{\substack{p \leq D \\ \psi(p)=1}}\frac{\log p}{p} + O\big(L(1,\psi)^{1/2}(\log x)^{3/2}\big).
\end{align*}
We now define
\begin{align*}
\kappa =  \frac{4\log\log(1/L(1,\psi)\log D)}{\log(1/L(1,\psi)\log D)}
\end{align*}
and observe that we may assume $\kappa < 1/20$, say. We split the sum over $p\leq D$ as
\begin{align*}
\sum_{\substack{p \leq D \\ \psi(p)=1}}\frac{\log p}{p} &= \sum_{\substack{p \leq D^\kappa \\ \psi(p)=1}}\frac{\log p}{p} + \sum_{\substack{D^\kappa < p \leq D \\ \psi(p)=1}}\frac{\log p}{p}.
\end{align*}
For the first sum we use the trivial bound
\begin{align*}
\sum_{\substack{p \leq D^\kappa \\ \psi(p)=1}}\frac{\log p}{p} \ll \kappa \log D.
\end{align*}
For the other sum we use Proposition 3.1 of [\textbf{\ref{FI13}}], the proof of which utilizes a more sophisticated version of the tensor power trick above. We obtain the bound
\begin{align*}
\sum_{\substack{D^\kappa < p \leq D \\ \psi(p)=1}}\frac{\log p}{p}\ll (\log D)\sum_{D^\kappa < p \leq D} \frac{(1\star\psi)(p)}{p} &\ll \kappa^{-1} \big(L(1,\psi) \log D\big)^{\kappa/2} (\log D).
\end{align*}
It follows that
\begin{align*}
\sum_{\substack{p \leq x \\ \psi(p)=1}}\frac{\log p}{p}&\ll \frac{\log\log(1/L(1,\psi)\log D)}{\log(1/L(1,\psi)\log D)}\log D + L(1,\psi)^{1/2} (\log x)^{3/2},
\end{align*}
as desired.
\end{proof}

\begin{lemma}\label{lem:deriv expressions}
We have
\begin{align*}
L'(1,\psi) &= \sum_{n \leq D^2} \frac{(1\star\psi)(n)}{n} + O\big(L(1,\psi)(\log D)\big)
\end{align*}
and
\begin{align*}
L''(1,\psi) &= -2\gamma L'(1,\psi) - 2\sum_{n \leq D^2} \frac{(1\star\psi)(n)(\log n)}{n} + O\big(L(1,\psi)(\log D)^2\big).
\end{align*}
\end{lemma}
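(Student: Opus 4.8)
The plan is to write $L'(1,\psi)$ and $L''(1,\psi)$ in terms of the Dirichlet series for $L(s,\psi)^2/\zeta(s)$ (whose coefficients are $(1\star\psi)$, since $L(s,\psi)=\zeta(s)\cdot L(s,\psi)/\zeta(s)$ has... more precisely one checks $\sum_n (1\star\psi)(n)n^{-s}=\zeta(s)L(s,\psi)$), truncate the Dirichlet series at $D^2$, and control the tail using the lacunarity estimate Lemma \ref{lem:basic lac estimate}. First I would recall that $\zeta(s)L(s,\psi)=\sum_{n\geq1}(1\star\psi)(n)n^{-s}$ for $\mathrm{Re}(s)>1$, and that this function is entire (the pole of $\zeta$ at $s=1$ is cancelled since $L(1,\psi)\neq0$), with value $L(1,\psi)$ at $s=1$ and derivative at $s=1$ equal to $L'(1,\psi)+\gamma L(1,\psi)$ by expanding $\zeta(s)=\frac{1}{s-1}+\gamma+O(s-1)$. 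Wait — this introduces the constant $\gamma$; I would rather work directly. So instead I would use a contour-integral (Perron / Mellin) representation: for a suitable smooth or sharp cutoff,
\begin{align*}
\sum_{n\leq D^2}\frac{(1\star\psi)(n)}{n} = \frac{1}{2\pi i}\int_{(c)}\zeta(1+w)L(1+w,\psi)\frac{(D^2)^w}{w}\,dw + (\text{small}),
\end{align*}
shift the contour past $w=0$, and pick up the residue. The double pole structure ($\zeta(1+w)\sim 1/w$ against the $1/w$ from Perron) produces exactly $L'(1,\psi)$ plus a term of size $L(1,\psi)\log D$ coming from $L(1,\psi)\log D^2$ and the $\gamma$-type constant; the shifted integral and the truncation error are both absorbed into $O(L(1,\psi)(\log D))$ after invoking convexity/standard bounds for $\zeta$ and $L(\cdot,\psi)$ near the $1$-line together with $L(1,\psi)\gg D^{-1/2}$.

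Alternatively — and this is probably the cleanest route to write up — I would avoid contour shifts entirely and argue directly from the identity $L'(1,\psi) = -\sum_{n}(1\star\psi)(n)(\log n)n^{-s}$ evaluated appropriately. Concretely: since $\zeta(s)L(s,\psi)$ is entire, for $s$ slightly to the right of $1$ we have $L(s,\psi) = \zeta(s)^{-1}\sum_n (1\star\psi)(n)n^{-s}$; differentiating and letting $s\to1^+$, the contributions organize so that $L'(1,\psi)$ equals $\sum_{n\leq D^2}(1\star\psi)(n)/n$ up to (i) the tail $\sum_{n>D^2}(1\star\psi)(n)/n$, which by Lemma \ref{lem:basic lac estimate} applied dyadically is $\ll L(1,\psi)\log x$ summed — actually one applies it at a single scale after noting the tail beyond $D^{2+\delta}$ decays and the piece between $D^2$ and $D^{2+\delta}$ is $\leq L(1,\psi)\log(D^{2+\delta})\ll L(1,\psi)\log D$ — and (ii) elementary constants times $L(1,\psi)$ and $L(1,\psi)\log D$. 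For the second formula I would differentiate once more: $L''(1,\psi)$ picks up a main term $-2\sum_{n\leq D^2}(1\star\psi)(n)(\log n)/n$, a term $-2\gamma L'(1,\psi)$ from the interaction of the two logarithms/the $\zeta$-expansion, and an error $O(L(1,\psi)(\log D)^2)$; here the tail is $\sum_{n>D^2}(1\star\psi)(n)(\log n)/n$, which one bounds by partial summation from Lemma \ref{lem:basic lac estimate}, gaining the extra $\log D$ and hence landing in $O(L(1,\psi)(\log D)^2)$.

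The main obstacle is bookkeeping the constant terms correctly: one must track the expansion $\zeta(1+w) = w^{-1} + \gamma + O(w)$ precisely enough to see that the $\gamma$ survives in the $L''$ formula (as $-2\gamma L'(1,\psi)$) but is swallowed into the error in the $L'$ formula, and to confirm that no stray $L(1,\psi)(\log D)^2$-type term corrupts the first identity. The analytic inputs themselves are routine — the entireness of $\zeta(s)L(s,\psi)$, standard subconvex-free bounds on the $1$-line (a convexity bound suffices since everything is eventually multiplied by $L(1,\psi)$ and we only need a power of $\log D$), and the lacunarity estimate already proved — so the real work is just careful Taylor expansion and partial summation; no new ideas beyond Lemma \ref{lem:basic lac estimate} are needed.
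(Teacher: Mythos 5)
Your first route (truncated Perron for $\zeta(1+w)L(1+w,\psi)\,x^{w}/w$, residue at the double pole at $w=0$, contour shift with convexity bounds, then $L(1,\psi)\gg D^{-1/2}$ to absorb the power-saving remainder) is a legitimate way to prove the lemma, but note that it is genuinely different in spirit from the paper's proof, which is a two-line citation: the paper simply quotes from [\textbf{\ref{IK}}; (22.109)] an asymptotic of the shape $\sum_{n\le x}(1\star\psi)(n)/n=L(1,\psi)\bigl(\tfrac12\log x+\gamma\bigr)+L'(1,\psi)+O(\text{power saving in }D)$ (and the $\log$-weighted analogue from the exercise on p.~527), sets $x=D^2$, and absorbs the $L(1,\psi)\log D$ (resp. $L(1,\psi)(\log D)^2$) pieces and the remainder into the error. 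Your residue bookkeeping matches: the $\gamma L(1,\psi)$ and $L(1,\psi)\log D$ terms are absorbable in the $L'$ formula, while the cross term in the once-more-differentiated series is exactly what leaves the surviving $-2\gamma L'(1,\psi)$ in the $L''$ formula. Two cautions, though: the error must be $\ll L(1,\psi)\log D$, which in the worst case means $\ll D^{-1/2}\log D$, so the claim that ``a convexity bound suffices'' is only true after you shift far enough left (say to $\mathrm{Re}(1+w)=\varepsilon$) or balance the truncation height carefully; and the error is not ``eventually multiplied by $L(1,\psi)$'' --- you need an honest power saving which is then compared with the lower bound $L(1,\psi)\gg D^{-1/2}$.

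The second route, which you advertise as the cleanest to write up, has genuine gaps and should be dropped. First, $\zeta(s)L(s,\psi)$ is \emph{not} entire: precisely because $L(1,\psi)\neq0$, the pole of $\zeta$ is not cancelled, and $\zeta(s)L(s,\psi)$ has a simple pole at $s=1$ with residue $L(1,\psi)$ (cancellation would require $L(1,\psi)=0$). Second, and more seriously, the ``tails'' $\sum_{n>D^2}(1\star\psi)(n)/n$ and $\sum_{n>D^2}(1\star\psi)(n)(\log n)/n$ are divergent, since $(1\star\psi)\geq0$ and the series at $s=1$ is $\zeta(1)L(1,\psi)$; Lemma \ref{lem:basic lac estimate} only gives $\sum_{D^2<n\le x}(1\star\psi)(n)/n\le L(1,\psi)\log x$, a bound that grows without limit in $x$, and there is no ``decay beyond $D^{2+\delta}$'' at $s=1$ to appeal to. More fundamentally, lacunarity alone cannot prove Lemma \ref{lem:deriv expressions}: the statement is an asymptotic formula relating a partial sum to $L'(1,\psi)$ and $L''(1,\psi)$, and it requires genuine cancellation in $\psi$ --- supplied either by P\'olya--Vinogradov through the hyperbola method (which is what underlies the quoted formulas in [\textbf{\ref{IK}}]) or by the convexity bounds inside your contour shift in route one --- not by the nonnegativity-based lacunarity estimates. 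So flesh out route one and discard route two.
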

\begin{proof}
The expression for $L'(1,\psi)$ comes from using [\textbf{\ref{IK}}; (22.109)] and the lower bound $L(1,\psi) \gg D^{-1/2}$. The expression for $L''(1,\psi)$ comes from [\textbf{\ref{IK}}; Exercise 2 on p.527] and a similar argument.
\end{proof}

\section{The first moments - Initial manipulations}\label{1stinitial}

In this section we make some initial manipulations in the first moment calculation. The error term analysis is easy, and the bulk of the effort goes into analysing the main term. This is in direct contrast to the second moment, which we discuss later.

From Lemma \ref{afe} we have
\begin{align*}
\sideset{}{^h}\sum_{f\in S_2^*(q)}\Lambda_{f,\psi}^{(k)}\Big(\frac{1}{2}\Big)M_{f,\psi}&=k!\big(1+(-1)^{k+1}\psi(q)\big)\sum_{(d,q)=1}\frac{\psi(d)}{d}\\
&\qquad\qquad\sum_{a\leq X}\sum_{n}\frac{\rho_1(a)(1\star\psi)(n)}{\sqrt{an}}V_k\Big(\frac{d^2n}{Q}\Big)\sideset{}{^h}\sum_{f\in S_2^*(q)}\lambda_f(a)\lambda_f(n).
\end{align*}
The condition $(d,q)=1$ may be removed at a negligible cost due to the decay of the function $V_k$. In view of Lemma \ref{Petersson} and \eqref{formulaV}, this is equal to
\begin{align*}
&k!\big(1+(-1)^{k+1}\psi(q)\big)\sum_{d}\frac{\psi(d)}{d}\sum_{n\leq X}\frac{\rho_1(n)(1\star\psi)(n)}{n}V_k\Big(\frac{d^2n}{Q}\Big)+O_\varepsilon\big(q^{-1/2+\varepsilon}DX\big)\nonumber\\
&\ =k!\big(1+(-1)^{k+1}\psi(q)\big)\frac{1}{2\pi i}\int_{(1)}G(u)\Gamma(1+u)^2Q^uL(1+2u,\psi)\mathcal{S}_1(u)\frac{du}{u^{k+1}}+O_\varepsilon\big(q^{-1/2+\varepsilon}DX\big),
\end{align*}
where
\begin{align}\label{eq:defn of S1 u}
\mathcal{S}_1(u)&=\sum_{n\leq X}\frac{\rho_1(n)(1\star\psi)(n)}{n^{1+u}}.
\end{align}

Note that $\mathcal{S}_1(u)\ll_\eps X^{-\text{Re}(u)+\varepsilon}$ for $\text{Re}(u)\ll 1/\log q$ and trivially we have $\mathcal{S}_1^{(j)}(0)\ll_{\eps,j}(\log q)^{j+4+\eps}$ for any $j\geq0$. We move the line of integration to $\text{Re}(u)=-1/2+\varepsilon$, crossing a pole of order $(k+1)$ at $u=0$, and the new integral is 
\[
\ll_\eps q^{\eps}\Big(\frac{Q}{X}\Big)^{-1/2}D^{1/2}\ll_\eps q^{-1/2+\eps}X^{1/2}.
\]
 The contribution of the residue is
\[
2\big(1+\psi(q)\big)\mathcal{S}_1(0)L'(1,\psi)+O_\eps\big(L(1,\psi)(\log q)^{5+\eps}\big)
\]
if $k=1$, and is
\begin{align*}
&4\big(1-\psi(q)\big)\mathcal{S}_1(0)\Big(L''(1,\psi)+(\log Q-2\gamma)L'(1,\psi)\Big)\\
&\qquad\qquad+4\big(1-\psi(q)\big)\mathcal{S}_1'(0)L'(1,\psi)+O_\eps\big(L(1,\psi)(\log q)^{6+\eps}\big)
\end{align*}
if $k=2$, since $\Gamma'(1)=-\gamma$. Thus we are left to study the singular series $\mathcal{S}_1(0)$ and its first derivative.

\section{The first moments - The singular series}\label{appendA}

In this section we finish the main term analysis of the first moment. It is helpful to recall that
\[
\rho_1(a)=\big((\alpha\mu)\star(\alpha\mu\psi)\big)(a)h(a),
\]
where
\[
h(n)=h_\psi(n)=\prod_{p \mid n} \left(1 + \frac{\alpha(p)^2\psi(p)}{p} \right)^{-1}
\]
and $\rho_1(a)\ll_\eps(\log q)^\eps\tau(a)$.
The following two lemmas and our results in Section \ref{1stinitial} will prove Proposition \ref{mainprop1}.

\begin{lemma}\label{lemmaA1}
Provided that $X \geq D^8$ we have
\begin{align*}
\mathcal{S}_1(0)&=\mathfrak{S}_1+O_{\eps}\big(L(1,\psi)(\log q)^{5+\eps}\big)+O_{A}\big((\log q)^{-A}\big),
\end{align*}
where $\mathfrak{S}_1$ is given by \eqref{singularseries1}.
\end{lemma}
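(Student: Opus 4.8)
The plan is to compute $\mathcal{S}_1(0)=\sum_{n\leq X}\rho_1(n)(1\star\psi)(n)/n$ by first identifying the ``full'' Dirichlet series $\sum_{n\geq 1}\rho_1(n)(1\star\psi)(n)/n$ as an Euler product equal to $\mathfrak{S}_1$, and then bounding the tail $\sum_{n>X}$. Since $\rho_1=((\alpha\mu)\star(\alpha\mu\psi))h$ is multiplicative and supported (up to the factor $h$) on squarefree integers coming from the convolution structure, the arithmetic function $g(n):=\rho_1(n)(1\star\psi)(n)$ is multiplicative, so $\sum_{n\geq1}g(n)/n=\prod_p\big(1+\sum_{k\geq1}g(p^k)/p^k\big)$. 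First I would work out the local factor at a prime $p\nmid D$: here $(1\star\psi)(p^k)=1+\psi(p)+\dots+\psi(p)^k$, while $\rho_1$ on prime powers is governed by $(\alpha\mu)\star(\alpha\mu\psi)$, which at $p$ gives $-\alpha(p)(1+\psi(p)) $ at $p^1$, $\alpha(p)^2\psi(p)$ at $p^2$, and $0$ beyond, all multiplied by $h(p^k)=(1+\alpha(p)^2\psi(p)/p)^{-1}$ for $k\geq1$. A short computation of the resulting finite sum over $k$ should collapse, after clearing the $h(p)$ denominator, to the factor $\big((1+1/p)^2+\psi(p)/p\big)^{-1}(1-\psi(p)/p)$ appearing in \eqref{singularseries1}; at $p\mid D$ one has $\psi(p)=0$, $(1\star\psi)(p^k)=1$, and the local factor reduces to $(1+1/p)^{-1}$, matching the $p\mid D$ part of $\mathfrak{S}_1$. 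Thus $\sum_{n\geq1}g(n)/n=\mathfrak{S}_1$ exactly (with the understanding that in \eqref{singularseries1} the product over $p\nmid D$ is truncated at $X$, a discrepancy I address in the tail estimate below).

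Next I would estimate the tail $\sum_{n>X}\rho_1(n)(1\star\psi)(n)/n$ together with the difference between the finite product in $\mathfrak{S}_1$ (primes $p\leq X$) and the full product. Using $\rho_1(n)\ll_\eps(\log q)^\eps\tau(n)$ from the excerpt, the tail is $\ll_\eps(\log q)^\eps\sum_{n>X}(1\star\psi)(n)\tau(n)/n$. Since $X\geq D^8$, this is exactly the kind of sum controlled by Lemma \ref{lem:updated lacunarity lemma} with $k=1$ (noting $X$ lies in an admissible range, $D^8\leq X\leq D^N$ for $N$ depending on $C$, and $X\leq T^B$ for suitable $B$ with $T=\exp((\log X)(\log\log X)^2)$, after first trivially discarding $n>T^B$ using rapid decay — here actually the sum $\mathcal{S}_1(0)$ is already truncated at $n\leq X<T^B$, so only the range $X<n\leq T^B$ is vacuous and one applies the lemma to $D^8<n\leq X$ reversed, i.e.\ bounds the complementary tail directly). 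Lemma \ref{lem:updated lacunarity lemma} then gives $\ll_{\eps,A}L(1,\psi)(\log D)^{5+\eps}+(\log D)^{-A}$, and since $\log D\asymp_C\log q$ this is $\ll_\eps L(1,\psi)(\log q)^{5+\eps}+O_A((\log q)^{-A})$. The discrepancy in the Euler product from truncating at $p\leq X$ contributes $\prod_{p>X}\big(1+O(1/p)\big)-1$-type error convolved against the lacunarity of $\psi$, which is likewise absorbed into $O_\eps(L(1,\psi)(\log q)^{5+\eps})$ after noting $\sum_{p>X}\psi(p)/p$ and $\sum_{p>X}1/p^2$ are both small; more carefully, the product-side error is handled by the same Lemma \ref{lem:basic lac estimate}/Lemma \ref{lem:updated lacunarity lemma} input since the tail of the Euler product expands into a Dirichlet series supported on $n>X$ with the same coefficient bounds.

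The main obstacle I anticipate is not the Euler-product identity, which is a routine (if slightly fiddly) local computation, but rather making the tail estimate genuinely unconditional at the quality $(\log q)^{5+\eps}$: one must ensure that \emph{all} error pieces — the tail of the truncated sum, the difference between $\mathcal{S}_1(0)$'s truncation at $X$ and the full Dirichlet series, and the truncation of the Euler product defining $\mathfrak{S}_1$ at $p\leq X$ — are each bounded by the lacunarity lemmas without incurring an extra power of $\log q$ or losing the $O_A((\log q)^{-A})$ fallback term (which is what rescues the estimate when $L(1,\psi)$ fails to be tiny). This requires being careful that the exponent $2^{k+1}+1=5$ from Lemma \ref{lem:updated lacunarity lemma} with $k=1$ is exactly what is claimed, and that the hypothesis $X\geq D^8$ is precisely the threshold making the lacunarity range $D^8<n$ kick in. I would organize the write-up so that the identity $\sum_{n\geq1}g(n)/n=\mathfrak{S}_1$ (with the caveat about $p\leq X$) is stated first, then a single clean application of Lemma \ref{lem:updated lacunarity lemma} disposes of everything else.
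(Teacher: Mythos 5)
Your local computations at each prime are fine and match what is needed, but the global structure of your argument has a genuine gap: the identity $\sum_{n\geq 1}\rho_1(n)(1\star\psi)(n)/n=\mathfrak{S}_1$ is false, and the tail you propose to bound is not even finite. The full Euler product over \emph{all} primes contains, at each prime with $\psi(p)=1$, a factor $1-4\alpha(p)h(p)/p+O(1/p^2)\approx 1-4/p$, and since $\sum_{\psi(p)=1}1/p$ diverges (half of all primes eventually split, regardless of how small $L(1,\psi)$ is), the product over all primes tends to $0$, not to $\mathfrak{S}_1$; equivalently the series $\sum_{n\geq1}\rho_1(n)(1\star\psi)(n)/n$ does not converge absolutely at all, since $\sum_n \tau(n)^2/n=\infty$. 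For the same reason your tail bound $\sum_{n>X}(1\star\psi)(n)\tau(n)/n$ is $+\infty$: Lemma \ref{lem:updated lacunarity lemma} only controls the truncated range $D^8<n\leq T^B$, and your parenthetical appeal to ``rapid decay'' to discard $n>T^B$ has nothing to support it — $\mathcal{S}_1(0)$ carries no smooth weight, so there is no decay beyond $T^B$. The truncation of the Euler product at $p\leq X$ in \eqref{singularseries1} is therefore not a small perturbation of a full product; it is the only object that makes sense.

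The paper's proof goes in the opposite direction precisely to avoid this. One defines $\mathcal{P}_1=\prod_{p\leq X}\big(\sum_{m\geq0}\rho_1(p^m)(1\star\psi)(p^m)p^{-m}\big)$, which equals the sum of $\rho_1(n)(1\star\psi)(n)/n$ over \emph{$X$-smooth} integers $n$ of all sizes. The smoothness restriction is what rescues the tail: by Rankin's trick with $\delta=1/\log X$, the contribution from smooth $n>T=\exp((\log X)(\log\log X)^2)$ is $\ll T^{-\delta}\prod_{p\leq X}(1+O(p^{\delta}/p))\ll_A(\log q)^{-A}$ — a bound that would be useless without the restriction $p\mid n\Rightarrow p\leq X$ — and the remaining range $X<n\leq T$ is handled by Lemma \ref{lem:updated lacunarity lemma} with $k=1$, giving $O_\varepsilon(L(1,\psi)(\log q)^{5+\varepsilon})+O_A((\log q)^{-A})$. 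This shows $\mathcal{P}_1=\mathcal{S}_1(0)$ up to the stated errors, and then the case analysis on $\psi(p)\in\{0,-1,1\}$ (which you have essentially done, noting that $\rho_1$ is supported on cube-free, not squarefree, integers) identifies $\mathcal{P}_1$ with $\mathfrak{S}_1$ exactly. If you reorganize your write-up so that the comparison is between $\mathcal{S}_1(0)$ and this $p\leq X$-truncated product, rather than a full Dirichlet series, your argument becomes the paper's.
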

\begin{proof}

Our basic idea is to use lacunarity to show that the sum
\begin{align*}
\mathcal{S}_1(0) &= \sum_{\substack{n\leq X}}\frac{\rho_1(n)(1\star\psi)(n)}{n}
\end{align*}
is, up to an acceptable error, equal to the Euler product
\begin{align*}
\mathcal{P}_1 &:= \prod_{p\leq X}\bigg( \sum_{n \geq 0}\frac{\rho_1(p^n)(1\star\psi)(p^n)}{p^n}\bigg).
\end{align*}
We actually start with $\mathcal{P}_1$ and show how to trim it down to get $\mathcal{S}_1(0)$.

Observe that
\begin{align*}
\mathcal{P}_1 &= \sum_{p \mid n\Rightarrow p \leq X}\frac{\rho_1(n)(1\star\psi)(n)}{n}.
\end{align*}
We first use a crude estimate to truncate the sum. Define $T = \exp((\log X)(\log \log X)^2)$ and $\delta = 1/\log X$. The contribution to $\mathcal{P}_1$ from $n > T$ is
\begin{align*}
&\leq \sum_{\substack{p \mid n\Rightarrow p \leq X \\ n > T}}\frac{|\rho_1(n)|\tau(n)}{n} \leq T^{-\delta}\sum_{p \mid n\Rightarrow p \leq X}\frac{|\rho_1(n)|\tau(n)}{n^{1-\delta}} = T^{-\delta} \prod_{p\leq X}\left(1 + \frac{2|\rho_1(p)|p^{\delta}}{p} + \frac{3|\rho_1(p^2)|p^{2\delta}}{p^2}\right)\\
&\ll T^{-\delta}\prod_{p\leq X}\left( 1+\frac{4\alpha(p)h(p)p^\delta}{p}\right)\ll T^{-\delta}\prod_{p\leq X}\left( 1+\frac{4p^\delta}{p}\right)\leq T^{-\delta}\prod_{p\leq X}\left( 1+\frac{12}{p}\right)\ll (\log X)^{12} T^{-\delta},
\end{align*}
where the penultimate inequality follows since $p^\delta \leq e < 3$. Since
\begin{align*}
T^\delta = \exp((\log \log X)^2)\gg_A (\log X)^A,
\end{align*}
we see that the contribution from $n>T$ contributes an error of size $O_{A}((\log q)^{-A})$. We have therefore deduced that
\begin{align*}
\mathcal{P}_1 &= \sum_{\substack{n \leq T \\ p \mid n\Rightarrow p \leq X}}\frac{\rho_1(n)(1\star\psi)(n)}{n}+O_{A}((\log q)^{-A}).
\end{align*}

We now use the lacunarity of $(1\star\psi)$ to replace the condition $n\leq T$ by $n\leq X$. For $n\leq T$ we have $h(n)\ll(\log q)^\varepsilon$, and therefore the contribution of $n>X$ is
\begin{align*}
&\ll_\varepsilon (\log q)^\varepsilon \sum_{\substack{X < n \leq T \\ p\mid n \Rightarrow p\leq X}}\frac{\tau(n)(1\star\psi)(n)}{n} \ll_{\varepsilon,A}L(1,\psi)(\log q)^{5+\varepsilon} + (\log q)^{-A},
\end{align*}
the second inequality following from Lemma \ref{lem:updated lacunarity lemma}. Thus
\begin{align*}
\mathcal{P}_1 &= \sum_{n\leq X}\frac{\rho_1(n)(1\star\psi)(n)}{n} +O_\varepsilon(L(1,\psi)(\log q)^{5+\varepsilon})+O_A((\log q)^{-A})\\
&=\mathcal{S}_1(0) + O_\varepsilon(L(1,\psi)(\log q)^{5+\varepsilon})+O_A((\log q)^{-A}).
\end{align*}

To finish our analysis we must evaluate
\begin{align*}
\mathcal{P}_1 &= \prod_{p\leq X} \bigg(\sum_{n \geq 0}\frac{\rho_1(p^n)(1\star\psi)(p^n)}{p^n}\bigg).
\end{align*}
There are three cases to consider, depending on the value of $\psi(p) \in \{-1,0,1\}$.

If $\psi(p) = 0$ (that is, if $p \mid D$), then
\begin{align*}
\sum_{n \geq 0}\frac{\rho_1(p^n)(1\star\psi)(p^n)}{p^n} = 1 + \frac{\alpha(p)\mu(p)}{p}=\left(1+\frac{1}{p}\right)^{-1}.
\end{align*}
If $\psi(p)=-1$, then
\begin{align*}
\sum_{n \geq 0}\frac{\rho_1(p^n)(1\star\psi)(p^n)}{p^n} &=1+\frac{\rho_1(p^2)}{p^2} = 1-\frac{\alpha(p)^2h(p)}{p^2}=\frac{p^2+p}{p^2+p+1}\\
&=\left(\Big(1+\frac1p\Big)^2 + \frac{\psi(p)}{p} \right)^{-1}\bigg(1- \frac{\psi(p)}{p}\bigg).
\end{align*}
Lastly, if $\psi(p)=1$, then
\begin{align*}
\sum_{n \geq 0}\frac{\rho_1(p^n)(1\star\psi)(p^n)}{p^n} &= 1-4\frac{\alpha(p)h(p)}{p}+3\frac{\alpha(p)^2h(p)}{p^2} = \frac{p^2-p}{p^3+3p+1}\\
&=\left(\Big(1+\frac1p\Big)^2 + \frac{\psi(p)}{p} \right)^{-1}\bigg(1- \frac{\psi(p)}{p}\bigg).
\end{align*}
\end{proof}

\begin{lemma}\label{lemmaA2}
With $\mathcal{S}_1$ given by \eqref{eq:defn of S1 u} and $X\geq D^8$ we have
\begin{align*}
\mathcal{S}_1'(0)&\ll_{\eps,A} \mathfrak{S}_1\frac{\log\log(1/L(1,\psi)\log D)}{\log(1/L(1,\psi)\log D)}\log D+L(1,\psi)(\log q)^{6+\eps}.
\end{align*}
\end{lemma}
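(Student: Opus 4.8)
The plan is to differentiate the Dirichlet series $\mathcal{S}_1(u) = \sum_{n\leq X}\rho_1(n)(1\star\psi)(n)n^{-1-u}$ under the sum, obtaining
\begin{align*}
\mathcal{S}_1'(0) = -\sum_{n\leq X}\frac{\rho_1(n)(1\star\psi)(n)\log n}{n},
\end{align*}
and then to reconstruct, as in the proof of Lemma \ref{lemmaA1}, an Euler-product version of this quantity together with its logarithmic derivative. Writing $\mathcal{P}_1(u) = \prod_{p\leq X}\big(\sum_{j\geq 0}\rho_1(p^j)(1\star\psi)(p^j)p^{-j(1+u)}\big)$, logarithmic differentiation gives $\mathcal{P}_1'(0) = \mathfrak{S}_1 \sum_{p\leq X} E_p$, where $E_p$ is an explicit local factor. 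Using the three local computations already carried out at the end of Lemma \ref{lemmaA1}, one finds $E_p = O((\log p)/p^2)$ when $\psi(p)\in\{0,-1\}$ and $E_p = O((\log p)/p)$ when $\psi(p)=1$ — the point being that the $\psi(p)=-1$ and $\psi(p)=0$ factors are $1+O(1/p^2)$ in a neighbourhood of $u=0$, so only the split primes contribute at the first order. Summing, $\mathcal{P}_1'(0) \ll \mathfrak{S}_1\big(1 + \sum_{p\leq X,\ \psi(p)=1}(\log p)/p\big)$, and Lemma \ref{lem:lac sum over primes} bounds the inner sum by $\ll \tfrac{\log\log(1/L(1,\psi)\log D)}{\log(1/L(1,\psi)\log D)}\log D + L(1,\psi)^{1/2}(\log X)^{3/2}$, with the second term absorbed into the claimed $L(1,\psi)(\log q)^{6+\varepsilon}$ error since $L(1,\psi)^{1/2} \ll L(1,\psi)\cdot D^{1/4} \ll L(1,\psi)(\log q)$ would be false — instead one notes $L(1,\psi)^{1/2}(\log X)^{3/2}$ is itself $O((\log q)^{-A})$-acceptable only if $L(1,\psi)$ is tiny, so more carefully this term should be handled by observing it is dominated by the main $\kappa$-term whenever $L(1,\psi)(\log D)=o(1)$; I would simply carry it along and check it is majorised by $\mathfrak{S}_1 \tfrac{\log\log(1/L(1,\psi)\log D)}{\log(1/L(1,\psi)\log D)}\log D$.

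The second half of the argument is to control the difference $\mathcal{S}_1'(0) - \mathcal{P}_1'(0)$, i.e. the error incurred in passing from the Euler product back to the truncated sum. This runs exactly parallel to Lemma \ref{lemmaA1}: first truncate $\mathcal{P}_1'(0)$ at $n\leq T = \exp((\log X)(\log\log X)^2)$ at a cost of $O_A((\log q)^{-A})$ using Rankin's trick (the extra $\log n$ factor is harmless, producing an additional power of $\log\log X$ which is swallowed by $T^{-\delta}$), then use the lacunarity estimate Lemma \ref{lem:updated lacunarity lemma} — now with the weight $\tau(n)^k$ replaced by $\tau(n)\log n \ll_\varepsilon \tau(n) n^\varepsilon$, or more cleanly $\tau(n)\log n \ll \tau(n)\cdot\tau(n) \cdot (\text{small})$ so that it fits the $k=2$ case — to replace $n\leq T$ by $n\leq X$ at a cost of $O_\varepsilon(L(1,\psi)(\log q)^{6+\varepsilon}) + O_A((\log q)^{-A})$. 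Here the exponent $6$ arises because $\log n$ effectively costs one extra factor of $\log q$ beyond the exponent $5$ in Lemma \ref{lemmaA1} (alternatively: Lemma \ref{lem:updated lacunarity lemma} with $k=2$ gives exponent $2^3+1 = 9$, but since $\rho_1$ already has size $\tau(n)$ rather than $\tau(n)^2$, the effective $k$ is $1$, giving $2^2+1 = 5$, plus $1$ for the $\log n$).

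I expect the main obstacle to be bookkeeping the precise power of $\log q$ in the error term and, more substantively, verifying that the main contribution is genuinely $\ll \mathfrak{S}_1 \sum_{p\leq X,\,\psi(p)=1}(\log p)/p$ rather than something larger: one must check that cross-terms in the logarithmic derivative of the product (coming from pairs of primes, or from the $p^2$-terms in the local factors) do not contribute more than the diagonal, which follows because $\mathfrak{S}_1$ already encodes the full product and $E_p$ is summable away from the split primes. A secondary subtlety is ensuring the term $L(1,\psi)^{1/2}(\log X)^{3/2}$ coming from Lemma \ref{lem:lac sum over primes} is correctly classified: since we are in the regime $L(1,\psi)(\log D) = o(1)$, one has $L(1,\psi)^{1/2}(\log X)^{3/2} \ll \big(L(1,\psi)(\log D)\big)^{1/2}(\log q)$, which is $o((\log q))$ and in particular is dominated by the displayed $\mathfrak{S}_1$-term (whose size is $\gg (\log D)^{1-o(1)}$ times a positive power, since $\mathfrak{S}_1 \asymp \prod_{p\leq X,\,\psi(p)=1}(1-1/p) \gg$ a power of $1/\log q$ only if there are many split primes — in fact $\mathfrak{S}_1$ may be as small as $(\log q)^{-O(1)}$, so one should present the final bound with $\mathfrak{S}_1$ kept as a factor throughout, exactly as in the statement). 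Keeping $\mathfrak{S}_1$ factored out from the start sidesteps this and makes the comparison clean.
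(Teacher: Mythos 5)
Your proposal is essentially the paper's argument: approximate $\mathcal{S}_1$ by the Euler product $\mathcal{P}_1$ using the truncation-plus-lacunarity machinery of Lemma \ref{lemmaA1}, write $\mathcal{P}_1'(0)=\mathcal{Q}(0)\mathcal{P}_1(0)=\mathcal{Q}(0)\mathfrak{S}_1$ with $\mathcal{Q}=\mathcal{P}_1'/\mathcal{P}_1$ a sum over $p\leq X$, and bound the split-prime sum by Lemma \ref{lem:lac sum over primes}. The only structural difference is how the error in $\mathcal{S}_1(u)=\mathcal{P}_1(u)+O_\varepsilon(L(1,\psi)(\log q)^{5+\varepsilon})+O_A((\log q)^{-A})$ is transferred to the derivative: the paper uses Cauchy's integral formula on a circle of radius $(\log q)^{-1-\varepsilon_0}$, which is what produces the exponent $6$, while you differentiate termwise and rerun the lacunarity estimate with the extra weight $\log n\ll(\log q)^{1+\varepsilon}$; both devices work and give the same exponent, so this is a cosmetic variant rather than a different route. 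Your closing resolution of the secondary term $L(1,\psi)^{1/2}(\log X)^{3/2}$ (keep the factor $\mathfrak{S}_1$ and absorb it into the displayed $\mathfrak{S}_1$-term, using $L(1,\psi)(\log D)=o(1)$) is also exactly what is needed.

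One concrete correction: your claim that the $\psi(p)=0$ local factors are $1+O(1/p^2)$ near $u=0$, so that $E_p=O((\log p)/p^2)$ for $p\mid D$, is false. For $p\mid D$ one has $\rho_1(p)=-\alpha(p)$ and $\rho_1(p^2)=0$, so the local factor is $1-\alpha(p)/p^{1+u}$ and its logarithmic derivative at $u=0$ is $\alpha(p)(\log p)/(p-\alpha(p))\asymp(\log p)/p$. These primes therefore contribute $\ll\sum_{p\mid D}(\log p)/p\ll\log\log D$ to $\mathcal{Q}(0)$, which does not fit into your ``$\mathfrak{S}_1\cdot O(1)$'' bookkeeping. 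The fix is the observation the paper makes explicitly and you omit: since $L(1,\psi)\gg D^{-1/2}$ one has
\begin{align*}
\frac{\log\log(1/L(1,\psi)\log D)}{\log(1/L(1,\psi)\log D)}\log D\gg\log\log D,
\end{align*}
so the $p\mid D$ contribution is absorbed into the main term. With that repair (and with $\mathfrak{S}_1=\mathcal{P}_1(0)$ kept as a factor throughout, as you propose), your argument goes through.
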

\begin{proof}

By Cauchy's integral formula, we have
\begin{align*}
\mathcal{S}_1'(0) &= \frac{1}{2\pi i}\oint \mathcal{S}_1(u) \frac{du}{u^2},
\end{align*}
where $u$ runs over the circle centered at the origin of radius $(\log q)^{-1-\varepsilon_0}$ and $\varepsilon_0>0$ is a sufficiently small, fixed constant. By arguing precisely as in the proof of Lemma \ref{lemmaA1}, we have
\begin{align*}
\mathcal{S}_1(u) &=  \prod_{p \leq X}\bigg(\sum_{n \geq 0}\frac{\rho_1(p^n)(1\star\psi)(p^n)}{p^{n(1+u)}}\bigg)+O_\varepsilon(L(1,\psi)(\log q)^{5+\varepsilon}) + O_A((\log q)^{-A}) \\
&=\mathcal{P}_1(u)+ O_\varepsilon(L(1,\psi)(\log q)^{5+\varepsilon}) + O_A((\log q)^{-A}),
\end{align*}
say. The choice of $|u|$ ensures that $T^{|u|} = 1+o(1)$, where $T$ is as in Lemma \ref{lemmaA1}. Hence
\begin{align*}
\mathcal{S}_1'(0) &= \frac{1}{2\pi i}\oint \mathcal{P}_1(u) \frac{du}{u^2}+O_\varepsilon(L(1,\psi)(\log q)^{6+\varepsilon_0 +\varepsilon}) + O_A((\log q)^{-A}).
\end{align*}
Noting that $\varepsilon_0$ can be arbitrarily small, this gives some of the error terms in the statement of the lemma.

We have
\begin{align*}
\frac{1}{2\pi i}\oint \mathcal{P}_1(u) \frac{du}{u^2} = \mathcal{P}_1'(0) = \mathcal{Q}(0) \mathcal{P}_1(0),
\end{align*}
say, where
\begin{align*}
\mathcal{Q}(u) = \frac{\mathcal{P}_1'(u)}{\mathcal{P}_1(u)} = \frac{d}{du}\log \mathcal{P}_1(u)
\end{align*}
is a sum over primes $p\leq X$. By simple computations analogous to those in the proof of Lemma \ref{lemmaA1} we deduce that
\begin{align*}
\mathcal{P}_1(u) &= \prod_{\substack{p \mid D}}\left(1 - \frac{\alpha(p)}{p^{1+u}}\right)\prod_{\substack{p\leq X \\ \psi(p)=-1}}\left(1 - \frac{\alpha(p)^2h(p)}{p^{2(1+u)}}\right)\prod_{\substack{p\leq X \\ \psi(p)=1}}\left(1-\frac{4\alpha(p)h(p)}{p^{1+u}}+\frac{3\alpha(p)^2h(p)}{p^{2(1+u)}}\right),
\end{align*}
and by logarithmic differentiation we derive
\begin{align*}
\mathcal{Q}(0) &= \sum_{p \mid D} \frac{\alpha(p)(\log p)}{p-\alpha(p)} + \sum_{\substack{p \leq X \\ \psi(p)=-1}} \frac{2\alpha(p)^2h(p)(\log p)}{p^{2}-\alpha(p)^2h(p)}\\
&\qquad\qquad+\sum_{\substack{p \leq X \\ \psi(p)=1}}\frac{4\alpha(p)h(p)p(\log p)-6\alpha(p)^2h(p)(\log p)}{p^2-4\alpha(p)h(p)p+3\alpha(p)^2h(p)}.
\end{align*}
We observe that the denominators in all the fractions here are uniformly bounded away from zero. Since
\begin{align*}
\sum_{p\mid D}\frac{\log p}{p}\ll \log \log D
\end{align*}
and
\begin{align*}
\sum_{p}\frac{\log p}{p^2}\ll 1
\end{align*}
we see that 
\begin{align*}
\mathcal{Q}(0) &\ll \log \log D + \sum_{\substack{p \leq X \\ \psi(p)=1}}\frac{\log p}{p}.
\end{align*}
An appeal to Lemma \ref{lem:lac sum over primes} finishes the proof, once we notice that
\begin{align*}
\frac{\log\log(1/L(1,\psi)\log D)}{\log(1/L(1,\psi)\log D)} \log D \gg \log \log D
\end{align*}
by the lower bound $L(1,\psi) \gg D^{-1/2}$, and $\mathfrak{S}_1\leq 1$.
\end{proof}

\begin{proof}[Proof of Proposition \ref{mainprop1}]
By the work of Section \ref{1stinitial}, we have
\begin{align*}
\sideset{}{^h}\sum_{f\in S_2^*(q)}\Lambda_{f,\psi}'\Big(\frac{1}{2}\Big)M_{f,\psi} &= 2\big(1+\psi(q)\big)\mathcal{S}_1(0)L'(1,\psi)+O_\eps\big(L(1,\psi)(\log q)^{5+\eps}\big) + O_\varepsilon\big(q^{-1/2+\varepsilon}DX\big).
\end{align*}
Lemma \ref{lemmaA1} and the trivial bound $L'(1,\psi) \ll (\log D)^2$ together yield
\begin{align*}
\sideset{}{^h}\sum_{f\in S_2^*(q)}\Lambda_{f,\psi}'\Big(\frac{1}{2}\Big)M_{f,\psi}&=2\big(1+\psi(q)\big)\mathfrak{S}_1L'(1,\psi)+O_\eps\big(L(1,\psi)(\log q)^{7+\eps}\big)+O_A\big((\log q)^{-A}\big)\\
&\qquad\qquad+O_\varepsilon\big(q^{-1/2+\varepsilon}DX\big),
\end{align*}
which is the first part of the proposition.

We also have
\begin{align*}
&\sideset{}{^h}\sum_{f\in S_2^*(q)}\Lambda_{f,\psi}''\Big(\frac{1}{2}\Big)M_{f,\psi} = 4\big(1-\psi(q)\big)\mathcal{S}_1(0)\Big(L''(1,\psi)+(\log Q-2\gamma)L'(1,\psi)\Big)\\
&\qquad\qquad+4\big(1-\psi(q)\big)\mathcal{S}_1'(0)L'(1,\psi) +O_\eps\big(L(1,\psi)(\log q)^{6+\eps}\big)+O_\varepsilon\big(q^{-1/2+\varepsilon}DX\big).
\end{align*}
We utilize the trivial bounds $L'(1,\psi) \ll (\log D)^2, L''(1,\psi) \ll (\log D)^3$ and Lemmas \ref{lemmaA1} and \ref{lemmaA2} to derive
\begin{align}\label{simplify1}
&\sideset{}{^h}\sum_{f\in S_2^*(q)}\Lambda_{f,\psi}''\Big(\frac{1}{2}\Big)M_{f,\psi} = 4\big(1-\psi(q)\big)\mathfrak{S}_1\Big(L''(1,\psi)+(\log Q-2\gamma)L'(1,\psi)\Big) \nonumber\\
&\qquad\qquad+ O \left(\mathfrak{S}_1|L'(1,\psi)|\frac{\log\log(1/L(1,\psi)\log D)}{\log(1/L(1,\psi)\log D)}\log D \right)+ O_\varepsilon (L(1,\psi)(\log q)^{8+\varepsilon}) \\
&\qquad\qquad  + O_\varepsilon\big(q^{-1/2+\varepsilon}DX\big).\nonumber
\end{align}

We can simplify things nicely if we know something about the size of
\begin{align*}
L''(1,\psi)+(\log Q-2\gamma)L'(1,\psi).
\end{align*}
We might expect that
\begin{align*}
L''(1,\psi)+(\log Q-2\gamma)L'(1,\psi) \gg (\log Q)L'(1,\psi),
\end{align*}
at least if $L'(1,\psi)$ is not too small. We can show that this does indeed occur using lacunarity. Applying Lemma \ref{lem:deriv expressions}  we obtain
\begin{align*}
L''(1,\psi)+(\log Q-2\gamma)L'(1,\psi)&= (\log Q-4\gamma )\sum_{n \leq D^2} \frac{(1\star\psi)(n)}{n}\\
&\qquad\qquad - 2\sum_{n \leq D^2} \frac{(1\star\psi)(n)(\log n)}{n}+O(L(1,\psi)(\log q)^2).
\end{align*}
We also note that
\begin{align*}
&(\log Q-4\gamma)\sum_{n \leq D^2} \frac{(1\star\psi)(n)}{n} -2 \sum_{n \leq D^2} \frac{(1\star\psi)(n)(\log n)}{n}\\
&\qquad\qquad\geq (\log Q - 4\log D-4\gamma)\sum_{n \leq D^2} \frac{(1\star\psi)(n)}{n}\geq \frac{\log Q}{2}\sum_{n \leq D^2} \frac{(1\star\psi)(n)}{n},
\end{align*}
the last inequality holding provided $q \geq D^{8}$, say, since $Q = qD/4\pi^2$. The sum over $n$ here is $\geq 1$ since $(1\star \psi)(n)$ is nonnegative and equal to 1 at $n=1$.

Since $\mathfrak{S}_1\leq 1$, we derive
\begin{align*}
&\sideset{}{^h}\sum_{f\in S_2^*(q)}\Lambda_{f,\psi}''\Big(\frac{1}{2}\Big)M_{f,\psi}\\
&\qquad=4\big(1-\psi(q)\big)\mathfrak{S}_1\bigg(1 + O \Big(\frac{\log \log (1/L(1,\psi)\log D)}{\log (1/L(1,\psi)\log D)} \Big) \bigg)\Big(L''(1,\psi)+(\log Q-2\gamma)L'(1,\psi)\Big)\\
&\qquad\qquad +O_\varepsilon\big(L(1,\psi)(\log q)^{8+\varepsilon}\big)+O_\varepsilon\big(q^{-1/2+\varepsilon}DX\big).
\end{align*}
Note that the second term in the second line of \eqref{simplify1} has been dropped using Cauchy-Schwarz's inequality.
\end{proof}

\section{The second moments - Initial manipulations}

The second moment analysis is much more complicated than the first moment analysis. A distinction in the main term analysis arises between $k=1$ and $k=2$, necessitating some additional arguments. Furthermore, the error term analysis is now highly non-trivial, and spills over several sections. 

As before, we can remove the condition $(d,q)=1$ in the expression for $\Lambda_{f,\psi}^{(k)}(1/2)$ in Lemma \ref{afe} at a negligible cost due to the decay of the function $V_k$. We use \eqref{mollifiersquare} and apply Lemma \ref{ortho} to get
\[
\Lambda_{f,\psi}^{(k)}\Big(\frac{1}{2}\Big)M_{f,\psi}^2=k!\big(1+(-1)^{k+1}\psi(q)\big)\sum_{d,b}\frac{\psi(d)}{db}\sum_{a\leq X^2/b}\sum_{n}\frac{\rho_2(ab)(1\star\psi)(bn)\lambda_f(an)}{\sqrt{an}}V_{k}\Big(\frac{d^2bn}{Q}\Big).
\]
Making use of the recursion formula
\begin{equation}\label{recursion}
(1\star\psi)(bn)=\sum_{g|(b,n)}\mu(g)(1\star\psi)\Big(\frac{ b}{g}\Big)(1\star\psi)\Big(\frac ng\Big)
\end{equation}
 we obtain that
\begin{align*}
&\Lambda_{f,\psi}^{(k)}\Big(\frac{1}{2}\Big)M_{f,\psi}^2=k!\big(1+(-1)^{k+1}\psi(q)\big)\\
&\qquad\qquad\sum_{d,b,g}\frac{\psi(d)\mu(g)(1\star\psi)(b)}{dbg^{3/2}}\sum_{a\leq X^2/bg}\sum_{n}\frac{\rho_2(abg)(1\star\psi)(n)\lambda_f(agn)}{\sqrt{an}}V_{k}\Big(\frac{d^2bg^2n}{Q}\Big).
\end{align*}
Lemma \ref{Petersson} then yields
\[
\sideset{}{^h}\sum_{f\in S_2^*(q)}\Lambda_{f,\psi}^{(k)}\Big(\frac{1}{2}\Big)^2M_{f,\psi}^2=\mathcal{M}_k^D+O\big(|E_k|\big),
\]
where
\begin{align*}
\mathcal{M}_k^D&=(k!)^2\big(1+(-1)^{k+1}\psi(q)\big)^2\sum_{\substack{d_1,d\\abg\leq X^2}}\frac{\psi(d_1)\psi(d)\mu(g)(1\star\psi)(b)}{d_1dbg^{3/2}}\\
&\qquad\qquad\sum_{m=agn}\frac{\rho_2(abg)(1\star\psi)(m)(1\star\psi)(n)}{\sqrt{amn}}V_{k}\Big(\frac{d_1^2m}{Q}\Big)V_{k}\Big(\frac{d^2bg^2n}{Q}\Big)
\end{align*}
and
\begin{align}\label{IOD}
E_k=q^{-2}\sum_{\substack{d_1,d\\abg\leq X^2}}\frac{\psi(d_1)\psi(d)\mu(g)(1\star\psi)(b)\rho_2(abg)}{d_1d\sqrt{a}bg^{3/2}}\sum_{c\geq1}\frac{T(c)}{c^2},
\end{align}
with
\begin{align}\label{Tcformula}
T(c)=cq\sum_{m,n}\frac{(1\star\psi)(m)(1\star\psi)(n)}{\sqrt{mn}}S(m,agn;cq)J_{1}\Big(\frac{4\pi\sqrt{agmn}}{cq}\Big)V_{k}\Big(\frac{d_1^2m}{Q}\Big)V_{k}\Big(\frac{d^2bg^2n}{Q}\Big).
\end{align}
Proposition \ref{mainprop2} and Proposition \ref{mainprop3} are immediate consequences of the following two results.

\begin{proposition}\label{prop:second moment main term}
Provided that $q,X\geq D^{24}$ 
we have 
\begin{align*}
\mathcal{M}_1^D &=4\big(1+\psi(q)\big)^2\mathfrak{S}_2L'(1,\psi)^2+O_\eps\big(L(1,\psi)(\log q)^{37+\eps}\big)+ O_A((\log q)^{-A})\\ 
&\qquad\qquad+ O_\eps(q^{-1/4+\eps}D^{-1/4}X^{3/4})
\end{align*}
and
\begin{align*}
&\mathcal{M}_2^D = 16\big(1-\psi(q)\big)^2 \mathfrak{S}_2\bigg(1 + O \Big(\frac{\log \log (1/L(1,\psi)\log D)}{\log (1/L(1,\psi)\log D)} \Big) \bigg)\\
&\qquad \times\Big(L''(1,\psi)+(\log Q-2\gamma)L'(1,\psi)\Big)^2+ O_\varepsilon( L(1,\psi)(\log q)^{39+\varepsilon})  +O_\eps(q^{-1/4+\eps}D^{-1/4}X^{3/4}).
\end{align*}
\end{proposition}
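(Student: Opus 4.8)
The plan is to evaluate the diagonal term $\mathcal{M}_k^D$ by opening the two weight functions $V_k$ through the Mellin representation \eqref{formulaV} and reducing to a double contour integral. First substitute $m=agn$, so that $\sqrt{amn}=an\sqrt g$ and the inner sum collapses to a single sum over $n$; writing each $V_k$ as a contour integral in a variable $u_1$, resp.\ $u_2$, on the line $\text{Re}(u_i)=1$ gives
\begin{align*}
\mathcal{M}_k^D&=(k!)^2\big(1+(-1)^{k+1}\psi(q)\big)^2\\
&\qquad\times\frac{1}{(2\pi i)^2}\int_{(1)}\int_{(1)}G(u_1)G(u_2)\Gamma(1+u_1)^2\Gamma(1+u_2)^2 Q^{u_1+u_2}\,\mathcal{D}_k(u_1,u_2)\,\frac{du_1\,du_2}{u_1^{k+1}u_2^{k+1}},
\end{align*}
where
\begin{align*}
\mathcal{D}_k(u_1,u_2)=\sum_{\substack{d_1,d,a,b,g,n\\ abg\le X^2}}\frac{\psi(d_1)\psi(d)\mu(g)(1\star\psi)(b)\rho_2(abg)(1\star\psi)(agn)(1\star\psi)(n)}{d_1^{1+2u_1}\,d^{1+2u_2}\,a^{1+u_1+u_2}\,b^{1+u_2}\,g^{2+u_1+2u_2}\,n^{1+u_1+u_2}}.
\end{align*}
The sums over $d_1$ and $d$ factor off at once as $L(1+2u_1,\psi)L(1+2u_2,\psi)$, so write $\mathcal{D}_k=L(1+2u_1,\psi)L(1+2u_2,\psi)\,\mathcal{S}_k(u_1,u_2)$.

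The crux is to show that, up to admissible errors, $\mathcal{S}_k$ continues holomorphically to a fixed polydisc about the origin, is of polynomial size in $X$ and $\log q$ there, and satisfies $\mathcal{S}_k(0,0)=\mathfrak{S}_2$. This is where the mollifier earns its keep: the sum over $n$ in isolation carries a factor $\sim\zeta(1+u_1+u_2)^2$ with a double pole on $u_1+u_2=0$ (the contribution of $\sum_n(1\star\psi)(n)^2 n^{-s}$ to $\Lambda_{f,\psi}^{(k)}$), but this pole is cancelled by the factor $\mu(g)$ and by the $\mu$-factors built into $\rho_2$, which are tuned precisely for this. To make the cancellation rigorous one proceeds as in the proof of Lemma \ref{lemmaA1}: truncate the $n$-sum using rapid decay of $V_k$, then remove the truncations carried by the support of $\rho_2$ — equivalently the cut-offs $d\le X$, $a_1,a_2\le X/d$ in the first line of \eqref{mollifiersquare} — by Rankin's trick together with the general-$k$ lacunarity estimate Lemma \ref{lem:updated lacunarity lemma}. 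Since $\rho_2\ll_\eps(\log q)^{4+\eps}\tau_4$ and the summand also carries the divisor-bounded weights $(1\star\psi)(b),(1\star\psi)(agn),(1\star\psi)(n)$, the resulting truncation error is $O_\eps(L(1,\psi)(\log q)^{37+\eps})+O_A((\log q)^{-A})$ for $k=1$, and the analogous bound with $37$ replaced by $39$ for $k=2$. After completion $\mathcal{S}_k$ is an absolutely convergent Euler product, and a prime-by-prime evaluation — splitting $(1\star\psi)(agn)$ by the recursion \eqref{recursion}, inserting $\rho_1=\big((\alpha\mu)\star(\alpha\mu\psi)\big)h$, and running through the cases $\psi(p)\in\{-1,0,1\}$ exactly as in Lemma \ref{lemmaA1} — shows the surviving local factors multiply out to $\mathfrak{S}_1^2=\mathfrak{S}_2$.

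With $\mathcal{S}_k$ regular at the origin, the only poles of the integrand in a neighborhood are $u_1^{-(k+1)}$ and $u_2^{-(k+1)}$. Shifting both contours to $\text{Re}(u_i)=-c$ for a suitable small fixed $c>0$ picks up the iterated residue at $u_1=u_2=0$ and leaves an integral that, by the size of $\mathcal{S}_k$ and of the $L$-functions on these lines together with $Q=qD/4\pi^2$, is $O_\eps(q^{-1/4+\eps}D^{-1/4}X^{3/4})$ (one checks the truncation errors above also remain acceptable there). For $k=1$ the residue is $\partial_{u_1}\partial_{u_2}$ of the integrand at the origin; since $G(0)=1$ and $G'(0)=0$, every term carrying a factor of $L(1,\psi)$ rather than $L'(1,\psi)$ lands in the error term, and the surviving contribution is $4\big(1+\psi(q)\big)^2\mathfrak{S}_2 L'(1,\psi)^2$, which gives the first assertion. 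For $k=2$ the residue is $\tfrac14\partial_{u_1}^2\partial_{u_2}^2$ of the integrand at the origin; using $\Gamma'(1)=-\gamma$ and the formula for $L''(1,\psi)$ in Lemma \ref{lem:deriv expressions}, its leading part reorganises into $16\big(1-\psi(q)\big)^2\mathfrak{S}_2\big(L''(1,\psi)+(\log Q-2\gamma)L'(1,\psi)\big)^2$. The lower-order terms involving $\partial_{u_i}\mathcal{S}_2$ are estimated exactly as $\mathcal{S}_1'(0)$ in Lemma \ref{lemmaA2}, via Lemma \ref{lem:lac sum over primes}; combined with the lower bound $L''(1,\psi)+(\log Q-2\gamma)L'(1,\psi)\gg(\log Q)L'(1,\psi)\gg\log Q$ from the proof of Proposition \ref{mainprop1}, they collapse into the factor $1+O\big(\log\log(1/L(1,\psi)\log D)/\log(1/L(1,\psi)\log D)\big)$.

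The principal obstacle is the middle step: verifying that the naively divergent $n$-sum is genuinely tamed by the mollifier, so that $\mathcal{S}_k$ extends holomorphically past $u_1+u_2=0$ with value $\mathfrak{S}_2$ at the origin, and pushing the truncation removals through with the heavy divisor weights $\tau_4$ coming from $\rho_2$ (whence the large exponents $37,39$ and the need for the general-$k$ lacunarity lemma). The $k=2$ case is further complicated by the proliferation of cross-terms in the fourth mixed partial, which must be reassembled into the clean square $\big(L''(1,\psi)+(\log Q-2\gamma)L'(1,\psi)\big)^2$ before the error analysis can be applied.
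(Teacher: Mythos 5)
Your proposal follows essentially the same route as the paper: open both $V_k$'s via \eqref{formulaV} into a double contour integral, pull out $L(1+2u,\psi)L(1+2v,\psi)$, use truncation plus lacunarity (the content of Lemmas \ref{lemmaB1} and \ref{lemmaB2}, proved by comparison with the Euler product over $p\leq X$ and case analysis on $\psi(p)$) to replace the arithmetic factor by a finite, entire sum equal to $\mathfrak{S}_2$ at the origin, shift contours to pick up the residues, and fold the $\partial_u\mathcal{S}_2(0,0)$, $\partial^2_{uv}\mathcal{S}_2(0,0)$ cross-terms into the $1+O(\cdot)$ factor via Lemma \ref{lem:lac sum over primes} and the lower bound on $L''(1,\psi)+(\log Q-2\gamma)L'(1,\psi)$, exactly as the paper does. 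Two small corrections: the exponent of $a$ in your $\mathcal{D}_k$ should be $1+u_1$ (only the first $V_k$ sees $a$); and the double pole of the untruncated $n$-sum at $u_1+u_2=0$ is not ``cancelled'' by the $\mu$-factors built into $\rho_2$ --- its polar coefficient merely carries a factor $L(1,\psi)^2$ and is small --- but since your rigorous implementation (like the paper's) truncates to a finite, entire sum on lines of positive real part before shifting, this misattribution is harmless, and to obtain the stated error $O_\eps(q^{-1/4+\eps}D^{-1/4}X^{3/4})$ the shift should go to $\mathrm{Re}=-1/4+\eps$ rather than a small fixed $-c$.
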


\begin{proposition}\label{propE_k}
Provided that $D^8X^6\ll q^{1-\eps}$ we have 
\begin{align*}
E_k&\ll_\eps L(1,\psi)(\log q)^{2k+24+\eps}+ q^{-1/12+\eps}D^{5/3}X^{5/2}+q^{-1/4+\eps}D^{17/4}X^{19/4}.
\end{align*}
\end{proposition}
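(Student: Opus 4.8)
The plan is to bound the off-diagonal contribution $E_k$ directly from its expression \eqref{IOD}, treating the Kloosterman-sum term $T(c)$ in \eqref{Tcformula}. First I would record the basic analytic inputs: the functions $V_k$ in \eqref{formulaV} decay rapidly, so effectively $m \ll_\varepsilon Q^{1+\varepsilon}/d_1^2$ and $n \ll_\varepsilon Q^{1+\varepsilon}/(d^2 b g^2)$, and the Bessel function obeys $J_1(x) \ll \min(x, x^{-1/2})$. The Gauss-Kloosterman machinery (Lemma \ref{Nelsonlemma}) is relevant only when we open the $(1\star\psi)$ coefficients as Dirichlet convolutions and isolate the Gauss-sum part; but a first, cruder pass uses only the Weil bound $S(m, agn; cq) \ll (m, agn, cq)^{1/2}(cq)^{1/2+\varepsilon}$. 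Plugging this into \eqref{Tcformula}, together with the trivial bounds $\rho_2(a) \ll_\varepsilon (\log q)^{4+\varepsilon}\tau_4(a)$ from \eqref{rho2} and $(1\star\psi)(n) \ll n^\varepsilon$, yields a bound for $T(c)$ in which the $m$ and $n$ sums are essentially $\sum_{m \ll Q/d_1^2} m^{-1/2+\varepsilon}$ and similarly for $n$, producing a power of $Q$. Summing over $c$ converges because of the $c^{-2}$ in \eqref{IOD} and the $c^{1/2}$ growth from Weil; the remaining sums over $d_1, d, a, b, g$ with $abg \leq X^2$ are handled by the standard bound $\sum_{a \leq Y}\tau_4(a)/\sqrt{a} \ll Y^{1/2}(\log Y)^3$. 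This route produces the polynomial error terms $q^{-1/12+\varepsilon}D^{5/3}X^{5/2}$ and $q^{-1/4+\varepsilon}D^{17/4}X^{19/4}$; the two distinct shapes arise from balancing the two regimes of the Bessel bound (i.e. whether $\sqrt{agmn}/cq$ is large or small), with the worse exponent coming from small $c$ where one must use $J_1(x) \ll x$ and the Weil bound is least efficient.

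The more delicate part is extracting the term $L(1,\psi)(\log q)^{2k+24+\varepsilon}$, which must come from the \emph{diagonal-like} piece hidden inside the off-diagonal: namely the contribution where the Kloosterman sum degenerates. Here the plan is to write $agn = m$ is impossible generically (that is the true diagonal), but when $c$ is such that $cq \mid (m - agn)$-type congruences force a Gauss-sum evaluation via Lemma \ref{Nelsonlemma}, one gets a main-term-like expression. I would split $T(c)$ according to the factorization $m = m_1 m_2$, $n = n_1 n_2$ dictated by Lemma \ref{Nelsonlemma} after opening $(1\star\psi) = 1 \star \psi$, so that the $\psi$-part of the coefficient combines with $u \mapsto \chi(u)$ to form $S_\chi$; the vanishing condition $dm_1 = c_1$ there severely restricts $c$, collapsing the $c$-sum. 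What survives is a sum over $n$ of the shape $\sum_n (1\star\psi)(n)\tau_k(n)/n$ times lower-order factors, and this is exactly where Lemma \ref{lem:updated lacunarity lemma} applies: with $Z \asymp q$ and $T^B$ the effective truncation coming from the decay of $V_k$, that lemma gives $\sum (1\star\psi)(n)\tau(n)^k/n \ll_\varepsilon L(1,\psi)(\log q)^{2^{k+1}+1+\varepsilon} + (\log q)^{-A}$. Tracking the extra divisor-function powers from $\rho_2$ (which is $\tau_4$-bounded, contributing up to $(\log q)^{\text{const}}$) and the two $V_k$'s and the $b, g$ sums accounts for the exponent $2k + 24$.

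The main obstacle I anticipate is the bookkeeping in this degenerate-Kloosterman piece: one must carefully separate the $d$-part (primes dividing $cq$) from the coprime part in each of $m$, $n$, $ag$, track the Gauss sums $\tau(\chi)$ with $|\tau(\chi)| = \sqrt{D}$ (which is why powers of $D$ appear), and verify that the surviving congruence conditions are compatible with the ranges $abg \leq X^2$, $m \ll Q^{1+\varepsilon}$, $n \ll Q^{1+\varepsilon}$. In particular, ensuring the hypothesis $D^8 X^6 \ll q^{1-\varepsilon}$ is exactly what is needed for the polynomial error terms to be $o(1)$ once $X \asymp q$ and $D$ is a small power of $q$ — so the proof must keep all exponents explicit rather than absorbing them into $\varepsilon$'s. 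A secondary technical point is justifying the interchange of the $c$-sum with the $m, n$-sums and the absolute convergence throughout; this follows from the rapid decay of $V_k$ and the bound $J_1(x) \ll x$ for small argument, but should be stated carefully at the outset.
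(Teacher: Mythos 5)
Your plan has a genuine gap at its core: the polynomial error terms $q^{-1/12+\eps}D^{5/3}X^{5/2}$ and $q^{-1/4+\eps}D^{17/4}X^{19/4}$ cannot be obtained by the "crude pass" you describe (Weil bound on $S(m,agn;cq)$, trivial bounds on $\rho_2$ and $(1\star\psi)$, and $J_1(x)\ll\min(x,x^{-1/2})$, summed with the triangle inequality). In the dominant range $m\asymp n\asymp Q\asymp qD$ with $c$ small, the Weil bound gives $(cq)^{1/2+\eps}$ per term and the Bessel factor only $x^{-1/2}$, so $T(c)/c^2$ is of size roughly $q^{5/2+\eps}$ up to factors of $D$, $a$, $g$, and after the prefactor $q^{-2}$ in \eqref{IOD} one is left with a bound that is polynomially \emph{large} in $q$, not small; there is no choice of splitting between the two Bessel regimes that rescues this, because the loss is structural: pointwise bounds throw away all cancellation in the $m,n$ and $c$ sums. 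The stated exponents in the proposition in fact record very specific cancellation mechanisms: the term $q^{-1/12+\eps}D^{5/3}X^{5/2}$ arises from truncating the $c$-sum at $C=q^{-1/3}D^{-5/6}\sqrt{M}$ by means of the bilinear estimate of Duke--Friedlander--Iwaniec ([\textbf{\ref{DFI2}}; Proposition 1]) and balancing it against the error from the subsequent analysis, while $q^{-1/4+\eps}D^{17/4}X^{19/4}$ comes from the delta-method error in the shifted convolution sum (Proposition \ref{propoff}). Neither input appears in your sketch, and without them the claimed exponents are unobtainable.

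The second half of your plan also does not go through as written. Lemma \ref{Nelsonlemma} evaluates hybrid sums $S_\chi(m,0;c)$ with one entry equal to zero; the sums in $T(c)$ are genuine Kloosterman sums $S(m,agn;cq)$ with both entries nonzero, and opening $(1\star\psi)$ to "form $S_\chi$" does not create a zero entry. In the paper the Gauss--Ramanujan structure only becomes available \emph{after} applying Voronoi summation (Lemmas \ref{voronoi}--\ref{voronoi2}) to the $m$-sum, which produces the dual variable $m'$ and the shift $h=m'-agD_2n$; the lacunary term $L(1,\psi)(\log q)^{2k+24+\eps}$ then comes from the $h=0$ piece $\mathcal{M}_k^{OD}$, where $S_{\psi_1}(0,0;cq)=\varphi(cq)$, via the Dirichlet-series identity for $\sum_n(1\star\psi)(an)(1\star\psi)(bn)n^{-s}$ in terms of $L(s,\psi)^2\zeta(s)^2/\zeta(2s)$ and a contour shift picking up a double pole at $v=u$ (giving $L(1,\psi)^2$ and $L(1,\psi)L'(1,\psi)$ contributions), not via a direct appeal to Lemma \ref{lem:updated lacunarity lemma}. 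The $h\neq0$ piece, which your proposal does not address at all, requires the delta method, further Voronoi summations, the orthogonality Lemma \ref{orthoRam}, and several contour moves (where the condition $G(1)=0$ is used to kill spurious poles); this off-off-diagonal analysis is where most of the work, and both polynomial error terms, actually reside.
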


We shall estimate the error term $E_k$ in Sections \ref{IkODfirst}--\ref{IkODlast}. We finish this section with the initial evaluation of the diagonal contribution $\mathcal{M}_k^{D}$. Inverting \eqref{recursion} and using \eqref{formulaV} we get
\begin{align*}
\mathcal{M}_k^D&=(k!)^2\big(1+(-1)^{k+1}\psi(q)\big)^2\sum_{\substack{d_1,d\\ab\leq X^2}}\frac{\psi(d_1)\psi(d)\rho_2(ab)}{d_1dab}\nonumber\\
&\qquad\qquad\sum_{n}\frac{(1\star\psi)(an)(1\star\psi)(bn)}{n}V_{k}\Big(\frac{d_1^2an}{Q}\Big)V_{k}\Big(\frac{d^2bn}{Q}\Big)\nonumber\\
&=(k!)^2\big(1+(-1)^{k+1}\psi(q)\big)^2\frac{1}{(2\pi i)^2}\int_{(1)}\int_{(1)}G(u)G(v)\Gamma(1+u)^2\Gamma(1+v)^2Q^{u+v}\nonumber\\
&\qquad\qquad  L(1+2u,\psi)L(1+2v,\psi)\mathcal{W}(u,v)\frac{dudv}{u^{k+1}v^{k+1}},
\end{align*}
where
\begin{align*}
\mathcal{W}(u,v)=\sum_{ab\leq X^2}\frac{\rho_2(ab)}{a^{1+u}b^{1+v}}\sum_{n\geq1}\frac{(1\star\psi)(an)(1\star\psi)(bn)}{n^{1+u+v}} .
\end{align*}

We move the lines of integration to $\text{Re}(u) =\text{Re}(v) = \delta = 1/\log X$ without encountering any poles. To proceed further we truncate the sum over $n$ in $\mathcal{W}(u,v)$ to be finite. Setting $T = \exp((\log X)(\log \log X)^2)$, we find by trivial estimation that
\begin{align*}
\mathcal{W}(u,v) &= \sum_{ab\leq X^2}\frac{\rho_2(ab)}{a^{1+u}b^{1+v}}\sum_{n\leq T}\frac{(1\star\psi)(an)(1\star\psi)(bn)}{n^{1+u+v}} + O_A((\log q)^{-A}).
\end{align*}
Showing that the contribution to $\mathcal{W}(u,v)$ from $X < n \leq T$ is negligible is more challenging and requires an in-depth analysis. We do not provide the details here, but all the necessary tools can be found in the proof of Lemma \ref{lemmaB1} below (see also the remark after Lemma \ref{lemmaB1}). Naturally, using the lacunarity of $(1\star\psi)$ is the main idea. We content ourselves here with stating that
\begin{align}\label{eq:bound for truncated S2 u v}
\begin{split}
\mathcal{W}(u,v) &= \mathcal{S}_2(u,v)+O_\eps\big(L(1,\psi)(\log q)^{33+\eps}\big)+ O_A((\log q)^{-A}),
\end{split}
\end{align}
where
\begin{align}\label{eq:def of S2 u v}
\mathcal{S}_2(u,v) &= \sum_{ab\leq X^2}\frac{\rho_2(ab)}{a^{1+u}b^{1+v}}\sum_{n\leq X}\frac{(1\star\psi)(an)(1\star\psi)(bn)}{n^{1+u+v}}.
\end{align}

The contribution from the error terms in \eqref{eq:bound for truncated S2 u v} to $\mathcal{M}_k^D$ is
\begin{align*}
\ll_{\varepsilon,A} L(1,\psi) (\log q)^{35+2k + \varepsilon} + (\log q)^{-A}.
\end{align*}
Observe that we have the bound $\mathcal{S}_2(u,v)\ll_\eps X^{-3\text{Re}(u+v)+\eps}$ for $\text{Re}(u),\text{Re}(v)\ll 1/\log q$. We also have the bounds
\begin{align*}
\frac{\partial^{i+j}}{\partial u^i \partial v^j} \mathcal{S}_2(u,v) \ll_{\varepsilon,i,j} (\log q)^{24+i+j+\varepsilon}
\end{align*}
for $\text{Re}(u),\text{Re}(v)\geq0$ and $i,j\geq 0$. It hence suffices to study
\begin{align*}
(k!)^2\big(1+(-1)^{k+1}\psi(q)\big)^2\frac{1}{(2\pi i)^2}\int_{(\delta)}\int_{(\delta)}G(u)&G(v)\Gamma(1+u)^2\Gamma(1+v)^2Q^{u+v}\nonumber\\
&\times L(1+2u,\psi)L(1+2v,\psi)\mathcal{S}_2(u,v)\frac{dudv}{u^{k+1}v^{k+1}}.
\end{align*}

We move the $v$-contour to  $\text{Re}(v) = -1/4+\varepsilon$, crossing a pole of order $(k+1)$ at $v=0$ with residue
\begin{align*}
R_{1}&=\frac{2\big(1+\psi(q)\big)^2L'(1,\psi)}{2\pi i}\int_{(\delta)}G(u)\Gamma(1+u)^2Q^{u}
L(1+2u,\psi)\mathcal{S}_2(u,0)\frac{du}{u^{2}}\\
&\qquad\qquad+O_\eps\big(L(1,\psi)(\log q)^{28+\eps}\big)
\end{align*}
if $k=1$, and
\begin{align*}
R_{2}&=\frac{8\big(1-\psi(q)\big)^2}{2\pi i}\int_{(\delta)}G(u)\Gamma(1+u)^2Q^{u}
L(1+2u,\psi)\mathcal{S}_2(u,0)\\
&\qquad\qquad\bigg(L''(1,\psi)+\Big(\log Q-2\gamma+\frac{\partial_v\mathcal{S}_2}{\mathcal{S}_2}(u,0)\Big)L'(1,\psi)\bigg)\frac{du}{u^{3}}+O_\eps\big(L(1,\psi)(\log q)^{30+\eps}\big)
\end{align*}
if $k=2$. The contribution of the new integral is bounded trivially by
\[
\ll_\eps Q^{-1/4+\eps}X^{3/4}\ll_\eps q^{-1/4+\eps}D^{-1/4}X^{3/4}.
\]

Regarding $R_{1}$ and $R_{2}$, we move the line of integration to $\text{Re}(u) = -1/4+\eps$, crossing a pole at $u=0$ with residue
\begin{align*}
&4\big(1+\psi(q)\big)^2\mathcal{S}_2(0,0)L'(1,\psi)^2+O_\eps\big(L(1,\psi)(\log q)^{27+\eps}\big)
\end{align*}
if $k=1$, and
\begin{align*}
16(1&-\psi(q))^2\mathcal{S}_2(0,0)\Big(L''(1,\psi)+(\log Q-2\gamma)L'(1,\psi)\Big)^2\\
&+32\big(1-\psi(q)\big)^2\partial_u \mathcal{S}_2(0,0) L'(1,\psi) \Big(L''(1,\psi)+(\log Q-2\gamma)L'(1,\psi)\Big)\\
&+16\big(1-\psi(q)\big)^2\partial_{uv}^2 \mathcal{S}_2(0,0)L'(1,\psi)^2+O_\eps\big(L(1,\psi)(\log q)^{29+\eps}\big).
\end{align*}
if $k=2$, as $\mathcal{S}_2$ is symmetric in $u$ and $v$. The final integrals, like before, are bounded trivially by $O_\eps(q^{-1/4+\eps}D^{-1/4}X^{3/4})$.  We therefore obtain
\begin{equation}\label{eq:intermediate eval of M_1 D}
\begin{split}
\mathcal{M}_1^D&=4\big(1+\psi(q)\big)^2\mathcal{S}_2(0,0)L'(1,\psi)^2+O_\eps\big(L(1,\psi)(\log q)^{37+\eps}\big)+ O_A((\log q)^{-A}) \\ 
&\qquad\qquad+ O_\eps(q^{-1/4+\eps}D^{-1/4}X^{3/4})
\end{split}
\end{equation}
and
\begin{equation}\label{eq:intermediate eval M_2 D}
\begin{split}
\mathcal{M}_2^D &= 16\big(1-\psi(q)\big)^2\mathcal{S}_2(0,0)\Big(L''(1,\psi)+(\log Q-2\gamma)L'(1,\psi)\Big)^2\\
&\qquad\qquad+32\big(1-\psi(q)\big)^2 \partial_u \mathcal{S}_2(0,0) L'(1,\psi)\Big(L''(1,\psi)+(\log Q-2\gamma)L'(1,\psi)\Big)\\
&\qquad\qquad+16\big(1-\psi(q)\big)^2\partial_{uv}^2 \mathcal{S}_2(0,0) L'(1,\psi)^2+O_\eps\big(L(1,\psi)(\log q)^{39+\eps}\big)\\
&\qquad\qquad+O_A((\log q)^{-A})+O_\eps(q^{-1/4+\eps}D^{-1/4}X^{3/4}).
\end{split}
\end{equation}

\section{The second moments - The singular series}\label{appendB}

In this section we prove Proposition \ref{prop:second moment main term}. In view of \eqref{eq:intermediate eval of M_1 D} and \eqref{eq:intermediate eval M_2 D}, we need to study $\mathcal{S}_2$ and some of its derivatives.

Recall from  \eqref{rho2} that
\[
\rho_2(a)=\sum_{d\leq X}\frac1d\sum_{\substack{a_1a_2=a\\a_1,a_2\leq X/d}}\rho_1(da_1)\rho_1(da_2)
\]
and $\rho_2(a)\ll_\eps (\log q)^{4+\eps} \tau_4(a)$.
In this section we shall prove the following two lemmas.

\begin{lemma}\label{lemmaB1}
Provided that $X\geq D^{24}$ we have
\begin{align*}
\mathcal{S}_2(0,0)&=\mathfrak{S}_2+O_\eps\big(L(1,\psi)(\log q)^{33+\eps}\big)+O_A\big((\log q)^{-A}\big),
\end{align*}
where $\mathfrak{S}_2$ is given by \eqref{singularseries2}.
\end{lemma}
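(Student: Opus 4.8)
The plan is to mimic the proof of Lemma \ref{lemmaA1}, factoring $\mathcal{S}_2(0,0)$ as an Euler product up to an acceptable error, and then evaluating the Euler product prime-by-prime. First I would truncate the inner sum over $n$ in \eqref{eq:def of S2 u v} and the outer sum over $ab$ at some $T = \exp((\log X)(\log\log X)^2)$ using the crude Rankin-trick bound exactly as in Lemma \ref{lemmaA1}: the weights $\rho_2(ab)\ll_\varepsilon(\log q)^{4+\varepsilon}\tau_4(ab)$ and $(1\star\psi)(m)\le\tau(m)$ contribute at most a power of $\log q$, so the tail $n>T$ (and $ab>T$) is $O_A((\log q)^{-A})$ after paying the factor $T^{-\delta}=\exp(-(\log\log X)^2)$. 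Having restricted to $n\le T$, $ab\le T$, I would then pass to the full Euler product
\[
\mathcal{P}_2 := \prod_{p\le X}\bigg(\sum_{\substack{a_1,a_2,n_1\ge 0}}\frac{\rho_2(p^{a_1+a_2})(1\star\psi)(p^{a_1+n_1})(1\star\psi)(p^{a_2+n_1})}{p^{a_1+a_2+n_1}}\bigg),
\]
where now $\rho_2$ is understood via \eqref{rho2} restricted to the prime $p$; the difference $\mathcal{P}_2-\mathcal{S}_2(0,0)$ comes from relaxing the conditions $ab\le X^2$, $n\le X$ to $p\mid ab\Rightarrow p\le X$, $p\mid n\Rightarrow p\le X$, and this is controlled by the lacunarity estimate Lemma \ref{lem:updated lacunarity lemma} with $k=7$ (since $\tau_4(ab)\tau(m)\tau(n)\ll\tau(abmn)^{\text{const}}$, and $2^{k+1}+1\le 33$ with a bit of room), giving an error $O_\varepsilon(L(1,\psi)(\log q)^{33+\varepsilon})+O_A((\log q)^{-A})$.

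Next I would compute the local factor at each prime $p\le X$ according to $\psi(p)\in\{-1,0,1\}$. The key simplification is that $\rho_1$ is supported on squarefree integers, so $\rho_2(p^j)=0$ for $j\ge 3$ and in fact the $d$-sum in \eqref{rho2} contributes only $d$ coprime to $ab$; at the level of a single prime this means $\rho_2(1)$, $\rho_2(p)$, $\rho_2(p^2)$ are finite expressions in $\alpha(p)$, $h(p)$ and the harmonic sum $\sum_{d}\alpha(d)^2\mu(d)^2\psi(d)/d$ already encoded in $h$. Concretely, from $M_{f,\psi}^2=\sum\rho_2(a)\lambda_f(a)/\sqrt a$ and the construction of the mollifier one reads off that the local factor of $\mathcal{P}_2$ is the square of the local factor of $\mathcal{P}_1$ computed in Lemma \ref{lemmaA1}: indeed $\mathcal{P}_1(0)$ was shown there to equal, prime-by-prime, $(1+1/p)^{-1}$ for $p\mid D$ and $\big((1+1/p)^2+\psi(p)/p\big)^{-1}(1-\psi(p)/p)$ for $p\nmid D$, and the same manipulation (Cauchy product of two copies of the mollifier against $1\star\psi$) squares each local factor. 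This yields $\mathcal{P}_2=\mathfrak{S}_1^2=\mathfrak{S}_2$ exactly, matching \eqref{singularseries2}.

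The main obstacle is the step I glossed over in the paper text after \eqref{eq:bound for truncated S2 u v}: showing that relaxing $n\le X$ (and $ab\le X^2$) to the multiplicative conditions $p\mid n\Rightarrow p\le X$ costs only $O_\varepsilon(L(1,\psi)(\log q)^{33+\varepsilon})$. The difficulty is that $(1\star\psi)(an)(1\star\psi)(bn)$ is not multiplicative in a single variable, so one cannot directly quote Lemma \ref{lem:updated lacunarity lemma}; instead one must use the recursion \eqref{recursion} to split $(1\star\psi)(an)=\sum_{e\mid(a,n)}\mu(e)(1\star\psi)(a/e)(1\star\psi)(n/e)$ (and similarly for $bn$), after which the relevant sum becomes one over a genuinely multiplicative function of $n$ of the shape $(1\star\psi)(n)^2\tau(\cdot)^{O(1)}$, and then one applies Lemma \ref{lem:updated lacunarity lemma} with $k$ chosen so that $2^{k+1}+1$ does not exceed the claimed exponent $33$. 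Bookkeeping the exponents of $\log q$ through the $d$-, $b$-, $g$-, $a$-sums (each contributing at most $\log q$ to some bounded power, with $\rho_2$ already carrying $(\log q)^{4+\varepsilon}$) is the only genuinely delicate part; everything else is a direct transcription of Lemma \ref{lemmaA1}.
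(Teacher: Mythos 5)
Your overall strategy (truncate with Rankin's trick, compare with an Euler product via lacunarity, evaluate local factors) is the same as the paper's, but there are genuine gaps at the two places where the real work of this lemma lives. First, the local-factor evaluation is asserted rather than proved, and the assertion rests on false premises: $\rho_1$ is supported on \emph{cube-free} integers, not squarefree ones (indeed $\rho_1(p^2)=h(p)\alpha(p)^2\psi(p)\neq 0$, and this term is used throughout the paper), and the $d$-sum in \eqref{rho2} is not restricted to $d$ coprime to $ab$. More importantly, there is no soft ``Cauchy product squares each local factor'' argument available: the inner sum $\sum_n (1\star\psi)(an)(1\star\psi)(bn)/n$ couples $a$ and $b$, so the main term does not factor as the square of the first-moment main term in any formal way. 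The identity ``local factor of $\mathcal{P}_2$ $=$ (local factor of $\mathcal{P}_1$)$^2$'' is exactly what has to be verified, and the paper does so by an explicit computation of the six-variable local sums in the cases $\psi(p)=0,-1,1$ (the case $\psi(p)=1$ requiring the combination of the $d=0,1,2$ contributions and a nontrivial identity for $\sum_{a+b=M}(a+n+1)(b+n+1)$). Your proposal skips precisely this computation.

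Second, you treat $\rho_2$ as if it were multiplicative, but it is not: the definition \eqref{rho2} carries the internal truncations $d\leq X$ and $dm_i\leq X$, so one cannot write $\mathcal{S}_2(0,0)$ prime-by-prime with ``$\rho_2(p^{a_1+a_2})$'' as local data. The paper's proof must open $\rho_2$ into the variables $d,m_1,m_2$ with $m_1m_2=ab$, form the Euler product over all six variables with these constraints removed, and then show separately (the $\mathcal{C}_d$ analysis) that re-imposing $dm_i\leq X$ costs only $O_\varepsilon(L(1,\psi)(\log q)^{33+\varepsilon})$; this step, via the factorization $\ell=\ell_1\ell_2^2$ and Lemma \ref{lem:updated lacunarity lemma} applied to $\tau(\ell_1)^{\log 10/\log 2}$ (so $k=4$, $2^{k+1}+1=33$), is where the exponent $33$ actually comes from. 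Your exponent bookkeeping is also inconsistent: applying Lemma \ref{lem:updated lacunarity lemma} with $k=7$ gives $2^{8}+1=257$, far above $33$, and the naive bound $\tau_4(ab)\tau(m)\tau(n)\ll \tau(abmn)^{O(1)}$ cannot recover the stated exponent. The paper avoids this loss by factoring $n$ according to the character value of its prime divisors and exploiting $(1\star\psi)(n_1)^2=\mathbf{1}(n_1=\square)$ when $\psi(p)=-1$ on $p\mid n_1$, $\tau(n)=(1\star\psi)(n)$ when $\psi(p)=1$ on $p\mid n$, and power savings when $n$ has a large divisor from $D^\infty$ or $a^\infty$, $b^\infty$; some device of this kind (your idea of using \eqref{recursion} could be a starting point, but it is not carried out) is needed before Lemma \ref{lem:updated lacunarity lemma} can be applied with a small $k$.
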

\begin{proof}
We wish to deduce an asymptotic formula for the sum
\begin{align*}
\mathcal{S}_2(0,0) &= \sum_{\substack{ab \leq X^2}}\frac{1}{ab}\sum_{d \leq X}\frac{1}{d}\sum_{\substack{m_1m_2 = ab \\ dm_i \leq X}}\rho_1(dm_1)\rho_1(dm_2)\sum_{\substack{n \leq X}}\frac{(1\star\psi)(an)(1\star\psi)(bn)}{n}.
\end{align*}
Here we have written $m_i$ instead of $n_i$ to reduce visual similarity between the variables. We wish to compare $\mathcal{S}_2(0,0)$ with the Euler product
\begin{align*}
\mathcal{P}_2 &:= \prod_{\substack{p \leq X}}\sum_{\substack{a,b,d,m_1,m_2,n \geq 0 \\ m_1+m_2 = a+b}}\frac{\rho_1(p^{d+m_1})\rho_1(p^{d+m_2})(1\star\psi)(p^{a+n})(1\star\psi)(p^{b+n})}{p^{a+b+d+n}}\\
&=\sum_{\substack{p \mid ab \Rightarrow p \leq X}}\frac{1}{ab}\sum_{p \mid d \Rightarrow p \leq X}\frac{1}{d}\sum_{\substack{m_1m_2 = ab}}\rho_1(dm_1)\rho_1(dm_2)\sum_{\substack{p \mid n \Rightarrow p \leq X}}\frac{(1\star\psi)(an)(1\star\psi)(bn)}{n}.
\end{align*}
Our strategy is the same as that in Lemma \ref{lemmaA1}. We first use smooth number estimates to truncate the variables $a,b,d,n$. We then use lacunarity to reduce the size of the variables much further and show that $\mathcal{P}_2$ is equal to $\mathcal{S}_2(0,0)$, up to an acceptably small error. Having done so, we then evaluate $\mathcal{P}_2$.

We begin with the smooth number estimates. Let $T = \exp((\log X)(\log \log X)^2)$. We wish to show that the contribution to $\mathcal{P}_2$ from any variable larger than $T$ is negligible. Let us consider the contribution from $n > T$, say. Set $\delta = 1/\log X$. Then by the triangle inequality the contribution to $\mathcal{P}_2$ from $n > T$ is
\begin{align*}
&\ll_\eps (\log q)^\varepsilon \sum_{\substack{p \mid ab \Rightarrow p \leq X}}\frac{\tau(a)\tau(b)}{ab}\sum_{\substack{m_1m_2 = ab}}\tau(m_1)\tau(m_2)\sum_{p \mid d \Rightarrow p \leq X}\frac{\tau(d)^2}{d}\sum_{\substack{ n > T\\p \mid n \Rightarrow p \leq X }}\frac{\tau(n)^2}{n}\\
&\ll_\eps(\log q)^{4+\varepsilon}\sum_{\substack{p \mid ab \Rightarrow p \leq X}}\frac{\tau(a)\tau(b)}{ab}\sum_{\substack{m_1m_2 = ab}}\tau(m_1)\tau(m_2)\sum_{\substack{n > T\\p \mid n \Rightarrow p \leq X}}\frac{\tau(n)^2}{n}\\
&\ll_\eps(\log q)^{20+\varepsilon}\sum_{\substack{n > T\\p \mid n \Rightarrow p \leq X}}\frac{\tau(n)^2}{n}\ll_\eps T^{-\delta}(\log q)^{20+\varepsilon}\sum_{\substack{p \mid n \Rightarrow p \leq X}}\frac{\tau(n)^2}{n^{1-\delta}}\\
&\ll_\eps T^{-\delta}(\log q)^{20+\varepsilon}\prod_{p \leq X}\left(1 + \frac{4p^{\delta}}{p} \right)\ll_\eps T^{-\delta}(\log q)^{20+\varepsilon}\prod_{p \leq X}\left(1 + \frac{11}{p} \right)\ll_\eps (\log q)^{31+\varepsilon}T^{-\delta},
\end{align*}
where the penultimate inequality follows since $p^{\delta}\leq e$ and $4e < 11$. By the definition of $T$ and $\delta$ we have
\begin{align*}
T^{\delta} = \exp ((\log \log X)^2)\gg_A (\log q)^A,
\end{align*}
so the contribution from $n > T$ to $\mathcal{P}_2$ is $O_A((\log q)^{-A})$. By similar arguments we show that the cost of truncating to $a,b$ or $d \leq T$ is $O_A((\log q)^{-A})$. Hence
\begin{align*}
\mathcal{P}_2 &= \sum_{\substack{a,b \leq T\\p \mid ab \Rightarrow p \leq X}}\frac{1}{ab}\sum_{\substack{d \leq T\\p \mid d \Rightarrow p \leq X}}\frac{1}{d}\sum_{\substack{m_1m_2 = ab}}\rho_1(dm_1)\rho_1(dm_2)\sum_{\substack{n \leq T\\p \mid n \Rightarrow p \leq X}}\frac{(1\star\psi)(an)(1\star\psi)(bn)}{n}\\
&\qquad\qquad+O_A((\log q)^{-A}).
\end{align*}

Now we use lacunarity to cut down the size of the variables much further. These arguments are rather more complicated than those we have heretofore seen, since the variables are quite entangled with each other.

We begin with the $n$ variable. We wish to show that, by lacunarity, the contribution to $\mathcal{P}_2$ from $X < n \leq T$ is negligible; call this contribution $\mathcal{C}_n$. By the triangle inequality
\begin{align*}
\mathcal{C}_n &\ll_\eps(\log q)^\varepsilon\sum_{\substack{p \mid ab \Rightarrow p \leq X}}\frac{1}{ab}\sum_{\substack{m_1m_2 = ab}}\tau(m_1)\tau(m_2)\sum_{\substack{p \mid d \Rightarrow p \leq X}}\frac{\tau(d)^2}{d}\sum_{\substack{X< n \leq T\\p \mid n \Rightarrow p \leq X}}\frac{(1\star\psi)(an)(1\star\psi)(bn)}{n}\\
&\ll_\eps (\log q)^{4+\varepsilon}\sum_{\substack{p \mid ab \Rightarrow p \leq X}}\frac{1}{ab}\sum_{\substack{m_1m_2 = ab}}\tau(m_1)\tau(m_2)\sum_{\substack{X< n \leq T\\p \mid n \Rightarrow p \leq X }}\frac{(1\star\psi)(an)(1\star\psi)(bn)}{n}.
\end{align*}
We may factor $n = n_0 n_+n_-$, where $p \mid n_0$ implies $p\mid D$ and $p \mid n_{\pm}$ implies $\psi(p) = \pm 1$, respectively. Since $n = n_0n_+n_- > X$ we must have that one of the variables $n_0,n_+,n_-$ exceeds $X^{1/3}$. If $n_0 > X^{1/3}$ then we factor $n = n_0 m$, where $n_0 \mid D^\infty$ and $(m,D)=1$ to get
\begin{align*}
\mathcal{C}_n &\ll_\eps(\log q)^{4+\varepsilon}\sum_{\substack{p \mid ab \Rightarrow p \leq X}}\frac{\tau(a)\tau(b)}{ab}\sum_{\substack{m_1m_2 = ab}}\tau(m_1)\tau(m_2)\sum_{\substack{p \mid m\Rightarrow p \leq X \\ (m,D)=1}}\frac{\tau(m)^2}{m}\sum_{\substack{ n_0 > X^{1/3}\\n_0 \mid D^\infty }}\frac{\tau(n_0)^2}{n_0}\\
&\ll_\eps(\log q)^{24+\varepsilon}\sum_{\substack{ n_0 > X^{1/3}\\n_0 \mid D^\infty}}\frac{\tau(n_0)^2}{n_0}\ll_\eps X^{-1/6+\varepsilon}\sum_{n_0 \mid D^\infty}\frac{\tau(n_0)^2}{\sqrt{n_0}}\ll_\eps X^{-1/6+\eps}.
\end{align*}

Next, suppose that the variable $n_+$ is greater than $X^{1/3}$. Then we may factor $n = n_+ m$, where $(n_+,m)=1$ and $n_+ > X^{1/3}$. We then see that
\begin{align*}
\mathcal{C}_n &\ll_\eps(\log q)^{4+\varepsilon}\sum_{\substack{p \mid ab \Rightarrow p \leq X}}\frac{\tau(a)\tau(b)}{ab}\sum_{\substack{m_1m_2 = ab}}\tau(m_1)\tau(m_2)\sum_{\substack{ X^{1/3}< n \leq T\\p \mid n \Rightarrow p \leq X \\ p \mid n \Rightarrow \psi(p)=1 }}\frac{\tau(n)^2}{n}\sum_{\substack{p \mid m \Rightarrow p \leq X \\(m,n)=1}}\frac{\tau(m)^2}{m}\\
&\ll_\eps(\log q)^{24+\varepsilon}\sum_{\substack{ X^{1/3}< n \leq T\\p \mid n \Rightarrow p \leq X \\ p \mid n \Rightarrow \psi(p)=1 }}\frac{\tau(n)^2}{n}.
\end{align*}
If $\psi(p)=1$ for all $p \mid n$ then $\tau(n) = (1\star \psi)(n)$, so
\begin{align*}
\sum_{\substack{ X^{1/3}< n \leq T\\p \mid n \Rightarrow p \leq X \\ p \mid n \Rightarrow \psi(p)=1 }}\frac{\tau(n)^2}{n} &\leq \sum_{\substack{ X^{1/3}< n \leq T\\p \mid n \Rightarrow p \leq X}}\frac{(1\star\psi)(n)\tau(n)}{n} \ll_{\varepsilon,A} L(1,\psi)(\log q)^{5+\varepsilon} + (\log q)^{-A},
\end{align*}
the last inequality following from Lemma \ref{lem:updated lacunarity lemma}.

Lastly, suppose that $n_- > X^{1/3}$. This case is slightly more complicated, because here we need to use the structure of the function $(1\star\psi)(an)(1\star\psi)(bn)$ and because of the fact that $a$ and $b$ are entangled with $n$ is somewhat important. We factor $n_- = n_1n_2n_3n_4$, where
\begin{align*}
p\mid n_1 &\Rightarrow p \nmid ab,\\
p \mid n_2 &\Rightarrow p \mid a, p \nmid b, \\
p \mid n_3 &\Rightarrow p \nmid a, p \mid b,\\
p \mid n_4 &\Rightarrow p \mid a, p \mid b.
\end{align*}
Since $n_- > X^{1/3}$ we see that one of the variables $n_i$ satisfies $n_i > X^{1/12}$. Assume first that $n_1 > X^{1/12}$. Then we may factor $n = n_1 m$, where $(m,n_1) = 1$ to obtain
\begin{align*}
\mathcal{C}_n &\ll_\eps(\log q)^{4+\varepsilon}\sum_{\substack{p \mid ab \Rightarrow p \leq X}}\frac{\tau(a)\tau(b)}{ab}\sum_{\substack{m_1m_2 = ab}}\tau(m_1)\tau(m_2)\sum_{\substack{ X^{1/12} < n_1 \leq T \\p \mid n_1 \Rightarrow \psi(p) = -1\\ (n_1,ab)=1}}\frac{(1\star\psi)(n_1)^2}{n_1}\sum_{\substack{p \mid m \Rightarrow p \leq X \\ (m,n_1)=1}}\frac{\tau(m)^2}{m}\\
&\ll_\eps (\log q)^{24+\varepsilon}\sum_{\substack{ X^{1/12} < n_1 \leq T \\p \mid n_1 \Rightarrow \psi(p) = -1}}\frac{(1\star\psi)(n_1)^2}{n_1}.
\end{align*}
Given the condition on primes dividing $n_1$ we see that $(1\star\psi)(n_1)^2 = \mathbf{1}(n_1 = \square)$, and therefore
\begin{align*}
\mathcal{C}_n &\ll_\eps X^{-1/24+\varepsilon}.
\end{align*}

Now consider the case in which $n_i > X^{1/12}$, where $2 \leq i \leq 4$. Observe that $n_2,n_4 \mid a^\infty$ and $n_3 \mid b^\infty$. Therefore, by the symmetry between $a$ and $b$ we may factor $n=n_a m$, say, where $n_a > X^{1/12}$, whence
\begin{align*}
\mathcal{C}_n &\ll_\eps(\log q)^{4+\varepsilon}\sum_{\substack{p \mid ab \Rightarrow p \leq X}}\frac{\tau(a)\tau(b)}{ab}\sum_{\substack{m_1m_2 = ab}}\tau(m_1)\tau(m_2)\sum_{\substack{n_a > X^{1/12}\\n_a \mid a^\infty }}\frac{\tau(n_a)^2}{n_a}\sum_{p \mid m \Rightarrow p \leq X}\frac{\tau(m)^2}{m}\\
&\ll_\eps q^{\eps}\sum_{\substack{p \mid ab \Rightarrow p \leq X}}\frac{1}{ab}\sum_{\substack{n_a > X^{1/12}\\n_a \mid a^\infty }}\frac{\tau(n_a)^2}{n_a}\ll_\eps X^{-1/24+\eps}\sum_{\substack{p \mid ab \Rightarrow p \leq X}}\frac{1}{ab}\sum_{\substack{n_a \mid a^\infty}}\frac{\tau(n_a)^2}{\sqrt{n_a}}\\
&\ll_\eps X^{-1/24+\eps}\sum_{\substack{p \mid ab \Rightarrow p \leq X}}\frac{\tau(a)}{ab}\ll_\eps X^{-1/24+\eps}.
\end{align*}
The above arguments thus yield
\begin{align*}
\mathcal{P}_2 &= \sum_{\substack{a,b \leq T\\p \mid ab \Rightarrow p \leq X}}\frac{1}{ab}\sum_{\substack{d \leq T\\p \mid d \Rightarrow p \leq X }}\frac{1}{d}\sum_{\substack{m_1m_2 = ab}}\rho_1(dm_1)\rho_1(dm_2)\sum_{\substack{n \leq X}}\frac{(1\star\psi)(an)(1\star\psi)(bn)}{n}\\
&\qquad\qquad+ O_\varepsilon(L(1,\psi)(\log q)^{29+\varepsilon})+O_A((\log q)^{-A}).
\end{align*}

We next wish to impose the conditions $dm_i \leq X$. Note that if $dm_1 \leq X$ and $dm_2 \leq X$ then $ab \leq X^2$. By symmetry it suffices to consider the contribution $\mathcal{C}_d$ from $dm_1 > X$. By the triangle inequality,
\begin{align*}
\mathcal{C}_d &\ll_\eps (\log q)^\varepsilon\sum_{\substack{a,b \leq T\\p \mid ab \Rightarrow p \leq X }}\frac{\tau(a)\tau(b)\tau(ab)}{ab}\sum_{\substack{ d \leq T\\p \mid d \Rightarrow p \leq X}}\frac{\tau(d)}{d}\mathop{\sum}_{\substack{m_1\mid ab\\dm_1 > X }}|\rho_1(dm_1)|\sum_{\substack{n \leq X}}\frac{\tau(n)^2}{n}\\
&\ll_\eps(\log q)^{4+\varepsilon}\sum_{\substack{a,b \leq T\\p \mid ab \Rightarrow p \leq X }}\frac{\tau(a)\tau(b)\tau(ab)}{ab}\sum_{\substack{d \leq T\\p \mid d \Rightarrow p \leq X }}\frac{\tau(d)}{d}\mathop{\sum}_{\substack{m_1\mid ab \\ dm_1 > X}}|\rho_1(dm_1)|.
\end{align*}
We change the order of summation to put the summations over $a$ and $b$ as the innermost sums, and then change variables to obtain
\begin{align*}
\mathcal{C}_d &\ll_\eps(\log q)^{4+\varepsilon}\sum_{\substack{d \leq T, m_1 \leq T^2 \\ dm_1 > X\\p \mid dm_1 \Rightarrow p \leq X}} \frac{\tau(d)|\rho_1(dm_1)|}{d}\sum_{\substack{p \mid t \Rightarrow p \leq X \\ m_1 \mid t}}\frac{\tau(t)}{t}\sum_{\substack{ab = t}}\tau(a)\tau(b)\\
&\ll_\eps(\log q)^{4+\varepsilon}\sum_{\substack{d \leq T, m_1 \leq T^2 \\ dm_1 > X\\p \mid dm_1 \Rightarrow p \leq X}} \frac{\tau(d)|\rho_1(dm_1)|}{d}\sum_{\substack{p \mid t \Rightarrow p \leq X \\ m_1 \mid t}}\frac{\tau(t)\tau_4(t)}{t}\\
&\ll_\eps(\log q)^{12+\varepsilon}\sum_{\substack{d \leq T, m_1 \leq T^2 \\ dm_1 > X\\p \mid dm_1 \Rightarrow p \leq X}} \frac{\tau(d)\tau(m_1)\tau_4(m_1)|\rho_1(dm_1)|}{dm_1}.
\end{align*}
We change variables again, writing $\ell = dm_1$, to derive
\begin{align*}
\mathcal{C}_d &\ll_\eps(\log q)^{12+\varepsilon} \sum_{\substack{X < \ell \leq T^3\\p\mid\ell \Rightarrow p \leq X }}\frac{|\rho_1(\ell)|}{\ell}\sum_{dm_1 = \ell}\tau(d)\tau(m)\tau_4(m_1) = (\log q)^{12+\varepsilon} \sum_{\substack{X < \ell \leq T^3\\p\mid\ell \Rightarrow p \leq X }}\frac{|\rho_1(\ell)|f(\ell)}{\ell},
\end{align*}
where $f(\ell)$ is a multiplicative function supported on cube-free integers with $f(p) = 10$ and $f(p^2) = 49$ (recall that $\rho_1$ is supported on cube-free integers). To handle the factor of $\rho_1(\ell)$ we factor $\ell = \ell_1\ell_2^2$, where $\ell_1$ and $\ell_2$ are squarefree and coprime to each other. We must have $\ell_1 > X^{1/2}$ or $\ell_2 > X^{1/4}$. The contribution from $\ell_2 > X^{1/4}$ is easily seen to be $O(q^{-\varepsilon})$, so we focus on the case when $\ell_1 > X^{1/2}$. In this case we find that
\begin{align*}
\mathcal{C}_d &\ll _\eps(\log q)^{12+\varepsilon} \sum_{\substack{X^{1/2} < \ell_1 \leq T^3\\p \mid \ell_1 \Rightarrow p \leq X }}\frac{\mu^2(\ell_1)(1\star\psi)(\ell_1) \tau(\ell_1)^{\log 10/\log 2}}{\ell_1} +q^{-\varepsilon}\\
& \ll_{\eps,A} L(1,\psi)(\log q)^{33+\varepsilon} + (\log q)^{-A},
\end{align*}
by Lemma \ref{lem:updated lacunarity lemma}. Therefore
\begin{align*}
\mathcal{P}_2 &=\sum_{\substack{ab \leq X^2}}\frac{1}{ab}\sum_{d \leq X}\frac{1}{d}\sum_{\substack{m_1m_2 = ab \\ dm_i \leq X}}\rho_1(dm_1)\rho_1(dm_2)\sum_{\substack{n \leq X}}\frac{(1\star\psi)(an)(1\star\psi)(bn)}{n}\\
&\qquad\qquad + O_\varepsilon(L(1,\psi)(\log q)^{33+\varepsilon})+O_A((\log q)^{-A}),
\end{align*}
or in other words, we have $\mathcal{S}_2(0,0)$ equals $\mathcal{P}_2$ up to an acceptable error.

The local factor for each $p\leq X$ in $\mathcal{P}_2$ is
\begin{align*}
\sum_{\substack{a,b,d,m_1,m_2,n \geq 0 \\ m_1+m_2 = a+b}}\frac{\rho_1(p^{d+m_1})\rho_1(p^{d+m_2})(1\star\psi)(p^{a+n})(1\star\psi)(p^{b+n})}{p^{a+b+d+n}}.
\end{align*}
There are three different cases to consider, depending on the value of $\psi(p)$.

When $\psi(p) = 0$ the local factor is
\begin{align*}
&\sum_{\substack{a,b,d,m_1,m_2,n \geq 0 \\ m_1+m_2 = a+b \\ d+m_i \leq 1}}\frac{\alpha(p^{d+m_1})\mu(p^{d+m_1})\alpha(p^{d+m_2})\mu(p^{d+m_2})}{p^{a+b+d+n}}\\
&\qquad\qquad= \left(1 - \frac{1}{p}\right)^{-1}\sum_{\substack{d,m_1,m_2 \geq 0 \\ d+m_i \leq 1}}\frac{(-1)^{m_1+m_2}\alpha(p^{d+m_1})\alpha(p^{d+m_2})}{p^{d+m_1+m_2}}\sum_{\substack{a,b \geq 0\\a+b=m_1+m_2}}1\\
&\qquad\qquad=\left(1 - \frac{1}{p}\right)^{-1}\sum_{\substack{d,m_1,m_2 \geq 0 \\ d+m_i \leq 1}}\frac{(-1)^{m_1+m_2}\alpha(p^{d+m_1})\alpha(p^{d+m_2})(m_1+m_2+1)}{p^{d+m_1+m_2}}\\
&\qquad\qquad= \left(1 - \frac{1}{p}\right)^{-1}\Big(1 -\frac{4\alpha(p)}{p} + \frac{3\alpha^2(p)}{p^2} +\frac{\alpha^2(p)}{p}\Big)= \left(1 + \frac{1}{p} \right)^{-2}.
\end{align*}

When $\psi(p) = -1$ we break up the sum according to the value of $d$, and then the values of $m_1$ and $m_2$, to see that the local factor is
\begin{align*}
&\sum_{\substack{a,b,d,m_1,m_2,n \geq 0 \\ m_1+m_2 = a+b\\ d+m_i \in \{0,2\} \\ a+n,b+n \text{ even}}}\frac{h(p^{d+m_1})\alpha^2(p^{d+m_1})\psi(p^{(d+m_1)/2})h(p^{d+m_2})\alpha^2(p^{d+m_2})\psi(p^{(d+m_2)/2})}{p^{a+b+d+n}}\\
&\qquad=\left(1 - \frac{1}{p^2}\right)^{-1}-\frac{2h(p)\alpha^2(p)}{p^2}\sum_{n \geq 0}\frac{1 + \frac{1+(-1)^n}{2}}{p^n}+\frac{h^2(p)\alpha^4(p)}{p^4}\sum_{n \geq 0}\frac{2+\frac{1+(-1)^n}{2}}{p^n}\\
&\qquad\qquad\qquad+\frac{h^2(p)\alpha^4(p)}{p^3}\sum_{n \geq 0}\frac{1 + \frac{1+(-1)^n}{2}}{p^n}+\frac{h^2(p)\alpha^4(p)}{p^2}\left(1 - \frac{1}{p^2}\right)^{-1}\\
&\qquad=\left(1 - \frac{1}{p^2}\right)^{-1}-\frac{2h(p)\alpha^2(p)}{p^2}\frac{p(2p+1)}{(p-1)(p+1)}+\frac{h^2(p)\alpha^4(p)}{p^4}\frac{p(3p+2)}{(p-1)(p+1)}\\
&\qquad\qquad\qquad+\frac{h^2(p)\alpha^4(p)}{p^3}\frac{p(2p+1)}{(p-1)(p+1)}+\frac{h^2(p)\alpha^4(p)}{p^2}\left(1 - \frac{1}{p^2}\right)^{-1}\\
&\qquad= \frac{p^2(p+1)^2}{(p^2+p+1)^2}=\Bigg(\left(1 + \frac{1}{p} \right)^2 + \frac{\psi(p)}{p} \Bigg)^{-2}\left(1 - \frac{\psi(p)}{p}\right)^2.
\end{align*}

Lastly, we consider the local factor when $\psi(p) = 1$. The analysis here is just slightly more complicated because $\rho_1(p)$ and $\rho_1(p^2)$ are both nonzero. Here it will be helpful for us to note that for a positive integer $M$ we have
\begin{align*}
&\sum_{\substack{a,b \geq 0\\a+b = M}}(a+n+1)(b+n+1)=\frac{M(M-1)(M+1)}{6}+(n+1)M(M+1)+(n+1)^2(M+1).
\end{align*}
The terms corresponding to $d=0$ are (breaking up $m_1$ and $m_2$ according to whether they are equal or distinct and using symmetries)
\begin{align*}
&\sum_{\substack{a,b,m_1,m_2,n \geq 0 \\ m_1+m_2 = a+b \\ m_i \leq 2}}\frac{\rho_1(p^{m_1})\rho_1(p^{m_2})(a+n+1)(b+n+1)}{p^{m_1+m_2+n}}\\
&\quad=\sum_{n\geq 0}\frac{(n+1)^2}{p^n}+\frac{4h^2(p)\alpha^2(p)}{p^2}\sum_{n\geq 0}\frac{3(n+1)^2+6(n+1)+1}{p^n}\\
&\qquad+\frac{h^2(p)\alpha^4(p)}{p^4}\sum_{n\geq 0}\frac{5(n+1)^2+20(n+1)+10}{p^n}-\frac{4h(p)\alpha(p)}{p}\sum_{n \geq 0}\frac{2(n+1)^2+2(n+1)}{p^n}\\
&\qquad+\frac{2h(p)\alpha^2(p)}{p^2}\sum_{n\geq 0}\frac{3(n+1)^2+6(n+1)+1}{p^n}-\frac{4h^2(p)\alpha^3(p)}{p^3}\sum_{n\geq 0}\frac{4(n+1)^2+12(n+1)+4}{p^n}.
\end{align*}
The terms corresponding to $d=1$ are
\begin{align*}
&\frac{1}{p}\sum_{\substack{a,b,m_1,m_2,n \geq 0 \\ m_1+m_2 = a+b \\ m_i \leq 2}}\frac{\rho_1(p^{m_1+1})\rho_1(p^{m_2+1})(a+n+1)(b+n+1)}{p^{m_1+m_2+n}}\\
&\qquad= \frac{4h^2(p)\alpha^2(p)}{p}\sum_{n\geq 0}\frac{(n+1)^2}{p^n}+\frac{h^2(p)\alpha^4(p)}{p^3}\sum_{n\geq 0}\frac{3(n+1)^2+6(n+1)+1}{p^n}\\
&\qquad\qquad-\frac{4h^2(p)\alpha^3(p)}{p^2}\sum_{n \geq 0}\frac{2(n+1)^2+2(n+1)}{p^n}
\end{align*}
and the term coming from $d=2$ is
\begin{align*}
\frac{h^2(p)\alpha^4(p)}{p^2}\sum_{n\geq 0}\frac{(n+1)^2}{p^n}.
\end{align*}
When we combine the terms from $d=0,d=1$ and $d=2$, we find that their sum is equal to
\begin{align*}
\frac{p^2(p-1)^2}{(p^2+3p+1)^2}=\left(\left(1 + \frac{1}{p} \right)^2 + \frac{\psi(p)}{p} \right)^{-2}\left(1 - \frac{\psi(p)}{p}\right)^2,
\end{align*}
which completes the proof of the lemma.
\end{proof}

\noindent\textbf{Remark.} In order to obtain \eqref{eq:bound for truncated S2 u v} one uses essentially the same argument that we used to bound $\mathcal{C}_n$ in Lemma \ref{lemmaB1}.

\begin{lemma}\label{lemmaB2}
 With $\mathcal{S}_2$ being given by \eqref{eq:def of S2 u v} and $X\geq D^{24}$ we have
\begin{align*}
\partial_u\mathcal{S}_2(0,0)&\ll_{\eps,A} \mathfrak{S}_2\frac{\log\log(1/L(1,\psi)\log D)}{\log(1/L(1,\psi)\log D)}\log D+L(1,\psi)(\log q)^{34+\eps}
\end{align*}
and
\begin{align*}
\partial_{uv}^2\mathcal{S}_2(0,0)&\ll_{\eps,A} \mathfrak{S}_2\frac{\log\log(1/L(1,\psi)\log D)}{\log(1/L(1,\psi)\log D)}(\log D)^2+L(1,\psi)(\log q)^{35+\eps}.
\end{align*}
\end{lemma}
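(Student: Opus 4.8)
The plan is to transcribe the proof of Lemma \ref{lemmaA2} into the two-variable setting, using Lemma \ref{lemmaB1} in place of Lemma \ref{lemmaA1}. First I would express the derivatives via Cauchy's integral formula: fixing a small $\varepsilon_0>0$ and writing $r=(\log q)^{-1-\varepsilon_0}$,
\[
\partial_u\mathcal{S}_2(0,0)=\frac{1}{2\pi i}\oint_{|u|=r}\mathcal{S}_2(u,0)\,\frac{du}{u^2},\qquad
\partial_{uv}^2\mathcal{S}_2(0,0)=\frac{1}{(2\pi i)^2}\oint_{|u|=r}\oint_{|v|=r}\mathcal{S}_2(u,v)\,\frac{du\,dv}{u^2v^2}.
\]
Re-running the truncation-plus-lacunarity argument of Lemma \ref{lemmaB1} (and the Remark following it) verbatim — the choice of $r$ guarantees $T^{|u|+|v|}=1+o(1)$ with $T$ as there — yields, uniformly for $|u|,|v|\le r$,
\[
\mathcal{S}_2(u,v)=\mathcal{P}_2(u,v)+O_\eps\big(L(1,\psi)(\log q)^{33+\eps}\big)+O_A\big((\log q)^{-A}\big),
\]
where $\mathcal{P}_2(u,v)=\prod_{p\le X}\mathcal{L}_p(u,v)$ is the natural Euler product, so that $\mathcal{P}_2(0,0)=\mathfrak{S}_2$ by the local computations in Lemma \ref{lemmaB1}. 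Integrating, the error contributes $O_\eps(L(1,\psi)(\log q)^{34+\eps})$ to $\partial_u\mathcal{S}_2(0,0)$ and $O_\eps(L(1,\psi)(\log q)^{35+\eps})$ to $\partial_{uv}^2\mathcal{S}_2(0,0)$, plus an $O_A((\log q)^{-A})$ which is harmless since $\mathfrak{S}_2\gg(\log q)^{-O(1)}$ while the advertised main terms are $\gg\mathfrak{S}_2$. Thus it suffices to estimate $\partial_u\mathcal{P}_2(0,0)$ and $\partial_{uv}^2\mathcal{P}_2(0,0)$.

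Logarithmic differentiation gives $\partial_u\mathcal{P}_2(0,0)=\mathfrak{S}_2\,\mathcal{Q}_u$ and $\partial_{uv}^2\mathcal{P}_2(0,0)=\mathfrak{S}_2(\mathcal{Q}_{uv}+\mathcal{Q}_u\mathcal{Q}_v)$, where $\mathcal{Q}_u=\sum_{p\le X}\partial_u\log\mathcal{L}_p(0,0)$, and similarly for $\mathcal{Q}_v,\mathcal{Q}_{uv}$. Inspecting the local factors from Lemma \ref{lemmaB1}: for $p\mid D$ the factor is $(1+1/p)^{-2}$ at the origin and every $(u,v)$-dependent term carries a factor $p^{-1}$, so $\partial_u\log\mathcal{L}_p(0,0)\ll(\log p)/p$ and $\partial_{uv}^2\log\mathcal{L}_p(0,0)\ll(\log p)^2/p$; for $\psi(p)=-1$ the vanishing of $\rho_1(p)$ forces every $(u,v)$-dependent term to carry a factor $p^{-2}$, so $\partial_u\log\mathcal{L}_p(0,0)\ll(\log p)/p^2$ and $\partial_{uv}^2\log\mathcal{L}_p(0,0)\ll(\log p)^2/p^2$; for $\psi(p)=1$ one again gets $\partial_u\log\mathcal{L}_p(0,0)\ll(\log p)/p$ and $\partial_{uv}^2\log\mathcal{L}_p(0,0)\ll(\log p)^2/p$. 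Using $\sum_{p\mid D}(\log p)/p\ll\log\log D$, $\sum_{p\mid D}(\log p)^2/p\ll(\log\log D)^2$ and convergence of $\sum_p(\log p)^2/p^2$, this gives
\[
\mathcal{Q}_u,\ \mathcal{Q}_v\ll\log\log D+\sum_{\substack{p\le X\\\psi(p)=1}}\frac{\log p}{p},\qquad
\mathcal{Q}_{uv}\ll(\log\log D)^2+\sum_{\substack{p\le X\\\psi(p)=1}}\frac{(\log p)^2}{p}.
\]

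The first prime sum is handled by Lemma \ref{lem:lac sum over primes}, which gives $\ll\frac{\log\log(1/L(1,\psi)\log D)}{\log(1/L(1,\psi)\log D)}\log D+L(1,\psi)^{1/2}(\log q)^{3/2}$; under the standing hypothesis $L(1,\psi)\log D=o(1)$ the second term is itself $\ll\frac{\log\log(1/L(1,\psi)\log D)}{\log(1/L(1,\psi)\log D)}\log D$, because with $y=1/(L(1,\psi)\log D)\to\infty$, $\log q\le C\log D$ and $L(1,\psi)\gg D^{-1/2}$ one has $L(1,\psi)^{1/2}(\log q)^{3/2}\ll(\log D)/y^{1/2}$, which is $o\big((\log\log y/\log y)\log D\big)$. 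For the $(\log p)^2$ sum I would not just differentiate Lemma \ref{lem:lac sum over primes} (that loses a power of $\log q$); instead I would re-run its proof with the heavier weight: with $\kappa=\frac{4\log\log(1/L(1,\psi)\log D)}{\log(1/L(1,\psi)\log D)}<1/20$ as there, split at $D^\kappa$ and $D$. The range $p\le D^\kappa$ contributes $\ll(\log D^\kappa)^2=\kappa^2(\log D)^2\ll\kappa(\log D)^2$; the range $D^\kappa<p\le D$ contributes $\ll(\log D)\cdot\kappa^{-1}(L(1,\psi)\log D)^{\kappa/2}(\log D)\ll\kappa(\log D)^2$ by Proposition 3.1 of [\textbf{\ref{FI13}}] together with the identity $(L(1,\psi)\log D)^{\kappa/2}=(\log(1/L(1,\psi)\log D))^{-2}$; and the range $D<p\le X$ contributes $\ll(\log X)^2\sum_{D<p\le X,\,\psi(p)=1}1/p\ll L(1,\psi)^{1/2}(\log q)^{5/2}$ by the tensor-power trick from Lemma \ref{lem:lac sum over primes}, which as above is $\ll\kappa(\log D)^2$. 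Hence $\sum_{p\le X,\,\psi(p)=1}(\log p)^2/p\ll\frac{\log\log(1/L(1,\psi)\log D)}{\log(1/L(1,\psi)\log D)}(\log D)^2$.

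Finally I would assemble the pieces. Since $\frac{\log\log(1/L(1,\psi)\log D)}{\log(1/L(1,\psi)\log D)}\log D\gg\log\log D$ (by $L(1,\psi)\gg D^{-1/2}$, exactly as at the end of Lemma \ref{lemmaA2}), the bound for $\mathcal{Q}_u$ collapses to $\mathcal{Q}_u\ll\frac{\log\log(1/L(1,\psi)\log D)}{\log(1/L(1,\psi)\log D)}\log D$, so $\partial_u\mathcal{S}_2(0,0)\ll\mathfrak{S}_2\,\mathcal{Q}_u+O_\eps(L(1,\psi)(\log q)^{34+\eps})$ is the first claim. For the second derivative, $\mathcal{Q}_u\mathcal{Q}_v\ll\big(\frac{\log\log(1/L(1,\psi)\log D)}{\log(1/L(1,\psi)\log D)}\log D\big)^2\le\frac{\log\log(1/L(1,\psi)\log D)}{\log(1/L(1,\psi)\log D)}(\log D)^2$ since the factor $\frac{\log\log(1/L(1,\psi)\log D)}{\log(1/L(1,\psi)\log D)}$ is $o(1)$, and $\mathcal{Q}_{uv}\ll\frac{\log\log(1/L(1,\psi)\log D)}{\log(1/L(1,\psi)\log D)}(\log D)^2$ by the previous paragraph; combined with the integration error $O_\eps(L(1,\psi)(\log q)^{35+\eps})$ this gives the second claim. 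The one genuinely new ingredient is the bound on the $(\log p)^2$-weighted prime sum over $\psi(p)=1$; everything else is a mechanical two-variable transcription of the first-moment argument, and that prime sum is the step I expect to require the most care.
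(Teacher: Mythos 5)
Your proposal is correct and follows essentially the same route as the paper: Cauchy's integral formula on circles of radius $(\log q)^{-1-\varepsilon_0}$, the Euler-product approximation $\mathcal{S}_2(u,v)=\mathcal{P}_2(u,v)+O_\eps\big(L(1,\psi)(\log q)^{33+\eps}\big)+O_A\big((\log q)^{-A}\big)$ obtained by a slight adjustment of Lemma \ref{lemmaB1}, logarithmic differentiation reducing the main terms to prime sums that only matter when $\psi(p)=1$ (or $p\mid D$), and then Lemma \ref{lem:lac sum over primes}. The only (harmless) divergence is the $(\log p)^2$-weighted sum: the paper simply bounds $\sum_{p\le X,\,\psi(p)=1}(\log p)^2/p\ll(\log q)\sum_{p\le X,\,\psi(p)=1}(\log p)/p$ and invokes Lemma \ref{lem:lac sum over primes} (using $\log q\asymp_C\log D$), whereas you re-run that lemma's proof with the heavier weight, which is a bit more work for essentially the same bound.
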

\begin{proof}
The proof is similar to the proof of Lemma \ref{lemmaA2}, but more tedious. We give some details, but leave the patient reader to work out the full argument.

We prove the second statement, since the first is similar and easier. Let $\varepsilon_0 > 0$ be a sufficiently small, fixed constant. We have
\begin{align*}
\partial_{uv}^2 \mathcal{S}_2(0,0) &= \frac{1}{(2\pi i)}\oint\oint \mathcal{S}_2(u,v) \frac{dudv}{u^2v^2},
\end{align*}
where $u$ and $v$ run over the circle centered at the origin of radius $(\log q)^{-1-\varepsilon_0}$. A slight adjustment of the proof of Lemma \ref{lemmaB1} shows that
\begin{align*}
\mathcal{S}_2(u,v) &=\mathcal{P}_2(u,v)+ O_\varepsilon (L(1,\psi)(\log q)^{33+\varepsilon})+O_A ((\log q)^{-A}),
\end{align*}
where
\begin{align*}
\mathcal{P}_2(u,v) &=\prod_{\substack{p \leq X}}\sum_{\substack{a,b,d,m_1,m_2,n \geq 0 \\ m_1+m_2 = a+b}}\frac{\rho_1(p^{d+m_1})\rho_1(p^{d+m_2})(1\star\psi)(p^{a+n})(1\star\psi)(p^{b+n})}{p^{a(1+u)+b(1+v)+d+n(1+u+v)}}.
\end{align*}
By integration we have
\begin{align*}
\frac{1}{2\pi i}\oint \mathcal{P}_2(u,v) \frac{dv}{v^2} = \frac{\partial}{\partial v}\mathcal{P}_2(u,v)\Big|_{v=0} = \mathcal{Q}_2(u) \mathcal{P}_2(u,0),
\end{align*}
say, where
\begin{align*}
\mathcal{Q}_2(u) &= \frac{1}{\mathcal{P}_2(u,0)}\frac{\partial}{\partial v}\mathcal{P}_2(u,v)\Big|_{v=0}
\end{align*}
is a sum over primes $p\leq X$. As $\mathcal{P}_2(u,v) = \mathcal{P}_2(v,u)$ we then deduce that
\begin{align*}
&\frac{1}{(2\pi i)}\oint\oint \mathcal{P}_2(u,v) \frac{dudv}{u^2v^2} = \left(\mathcal{Q}_2'(0) + \mathcal{Q}_2(0)^2\right)\mathcal{P}_2(0,0)= \left(\mathcal{Q}_2'(0) + \mathcal{Q}_2(0)^2\right)\mathfrak{S}_2.
\end{align*}
It then suffices to bound $\mathcal{Q}_2(0)$ and $\mathcal{Q}_2'(0)$. 

To bound these sums over primes we again discriminate according to the value of $\psi(p)$. Since
\begin{align*}
\sum_{p \mid D}\frac{(\log p)^{O(1)}}{p} \ll (\log \log D)^{O(1)}
\end{align*}
and
\begin{align*}
\sum_p \frac{(\log p)^{O(1)}}{p^2} \ll 1,
\end{align*}
it suffices to bound
\begin{align*}
\sum_{\substack{p \leq X \\ \psi(p) = 1}} \frac{(\log p)}{p}
\end{align*}
and
\begin{align*}
\sum_{\substack{p \leq X \\ \psi(p) = 1}} \frac{(\log p)^2}{p}\ll (\log q)\sum_{\substack{p \leq X \\ \psi(p) = 1}} \frac{(\log p)}{p}.
\end{align*}
Now apply Lemma \ref{lem:lac sum over primes} and the lemma follows.
\end{proof}

\begin{proof}[Proof of Proposition \ref{prop:second moment main term}]
We derive the expression for $\mathcal{M}_1^D$ immediately from \eqref{eq:intermediate eval of M_1 D}, the trivial bound $L'(1,\psi) \ll (\log D)^2$ and Lemma \ref{lemmaB1}.

Recall from \eqref{eq:intermediate eval M_2 D} that
\begin{align*}
\mathcal{M}_2^D &= 16\big(1-\psi(q)\big)^2\mathcal{S}_2(0,0)\Big(L''(1,\psi)+(\log Q-2\gamma)L'(1,\psi)\Big)^2\\
&\qquad\qquad+32\big(1-\psi(q)\big)^2 \partial_u \mathcal{S}_2(0,0) L'(1,\psi)\Big(L''(1,\psi)+(\log Q-2\gamma)L'(1,\psi)\Big)\\
&\qquad\qquad+16\big(1-\psi(q)\big)^2\partial_{uv}^2 \mathcal{S}_2(0,0) L'(1,\psi)^2+O_\eps\big(L(1,\psi)(\log q)^{39+\eps}\big)\\
&\qquad\qquad+O_A((\log q)^{-A})+O_\eps(q^{-1/4+\eps}D^{-1/4}X^{3/4}).
\end{align*}
By Lemma \ref{lemmaB1} and the trivial bounds $L'(1,\psi) \ll (\log D)^2, L''(1,\psi) \ll (\log D)^3$, the first term is equal to
\begin{align*}
&16\big(1-\psi(q)\big)^2 \mathfrak{S}_2\Big(L''(1,\psi)+(\log Q-2\gamma)L'(1,\psi)\Big)^2+O_\varepsilon(L(1,\psi)(\log q)^{39+\varepsilon}) + O_A((\log q)^{-A}).
\end{align*}
We use Lemma \ref{lemmaB2} 
to obtain that the second term is
\begin{align*}
&\ll_{\varepsilon,A}\mathfrak{S}_2|L'(1,\psi)| \Big|L''(1,\psi)+(\log Q-2\gamma)L'(1,\psi) \Big| \frac{\log \log(1/L(1,\psi)\log D)}{\log (1/L(1,\psi)\log D)} \log D\\
&\qquad\qquad + L(1,\psi)(\log q)^{39+\varepsilon}.
\end{align*}
We similarly see that the third term is
\begin{align*}
&\ll_{\varepsilon,A} \mathfrak{S}_2 L'(1,\psi)^2  \frac{\log \log(1/L(1,\psi)\log D)}{\log (1/L(1,\psi)\log D)}(\log D)^2 + L(1,\psi)(\log q)^{39+\varepsilon}.
\end{align*}
Using Lemma \ref{lem:deriv expressions} and the trivial bound $\mathfrak{S}_2\leq 1$ to simplify as in the proof of Proposition \ref{mainprop1}, we deduce that
\begin{align*}
&\mathcal{M}_2^D = 16\big(1-\psi(q)\big)^2 \mathfrak{S}_2\bigg(1 + O \Big(\frac{\log \log (1/L(1,\psi)\log D)}{\log (1/L(1,\psi)\log D)} \Big) \bigg)\\
&\qquad\times\Big(L''(1,\psi)+(\log Q-2\gamma)L'(1,\psi)\Big)^2+ O_\varepsilon( L(1,\psi)(\log q)^{39+\varepsilon})+O_\eps(q^{-1/4+\eps}D^{-1/4}X^{3/4}),
\end{align*}
as claimed.
\end{proof}

\section{The second moments - The error term $E_k$}\label{IkODfirst}

In Sections \ref{IkODfirst}--\ref{IkODlast} we shall prove Proposition \ref{propE_k}.


\subsection{Smooth partition of unity}
We apply a dyadic partition of unity to the sum over $m,n$ in $T(c)$ in \eqref{Tcformula}. Let $\omega$ be a smooth non-negative function supported in $[1, 2]$ such that 
\begin{equation*}\label{eq:funcF}
\sum_{M}\omega\Big(\frac xM\Big)=1,
\end{equation*}
where $M$ runs over a sequence of real numbers with $\#\{M: M\leq Z\}\ll \log Z$. Then 
 \begin{equation*} 
T(c)=\sum_{M,N}T_{M,N}(c),
\end{equation*}
where 
 \begin{align}\label{TMNc} 
T_{M,N}(c)&=\frac{cq}{\sqrt{MN}}\sum_{m,n}\,(1\star\psi)(m)(1\star\psi)(n)S(m,agn;cq)\\
&\qquad\qquad J_{1}\Big(\frac{4\pi\sqrt{agmn}}{cq}\Big)V_{k}\Big(\frac{d_1^2m}{Q}\Big)V_{k}\Big(\frac{d^2bg^2n}{Q}\Big)\omega_0\Big(\frac mM\Big)\omega_0\Big(\frac nN\Big)\nonumber
\end{align}
and
\[
\omega_0(x)=x^{-1/2}\omega(x).
\]

Due to the decay of the function $V_k$, we can restrict $M,N$ to $$M,N\ll Q^{1+\varepsilon}$$ with a negligible error term.

\subsection{Removing large values of $c$}

It is convenient to remove large values of $c$. We shall do this by appealing to Proposition 1 in [\textbf{\ref{DFI2}}].

\begin{lemma}\label{lem:removingC}
For any $C\geq q^{-1}\sqrt{MN}X$ we have
 \begin{equation*}
\sum_{c \geq C}\frac{T_{M,N}(c)}{c^2} \ll_{\varepsilon} (Cq)^{\varepsilon}q^{3/2}D(ag)^{3/4}\Big( \frac{\sqrt{MN}}{C}\Big)^{1/2}.
\end{equation*}
\end{lemma}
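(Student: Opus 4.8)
The plan is to bound $\sum_{c\geq C}T_{M,N}(c)/c^2$ by opening the Kloosterman sum $S(m,agn;cq)$ and treating the resulting additive character sums in $m$ and $n$ via Poisson summation, which is precisely the mechanism behind Proposition 1 of [\textbf{\ref{DFI2}}]. First I would record the structure of $T_{M,N}(c)$ from \eqref{TMNc}: it is $cq(MN)^{-1/2}$ times a bilinear sum over $m\asymp M$, $n\asymp N$ of $(1\star\psi)(m)(1\star\psi)(n)$ against the Kloosterman sum at modulus $cq$, weighted by the Bessel factor $J_1(4\pi\sqrt{agmn}/(cq))$ and the smooth cut-offs coming from $V_k$ and $\omega_0$. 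Because $M,N\ll Q^{1+\eps}$ and $C\geq q^{-1}\sqrt{MN}X$, the Bessel argument $4\pi\sqrt{agmn}/(cq)$ is controlled, so the analytic weight $W(m,n):=J_1(4\pi\sqrt{agmn}/(cq))V_k(d_1^2m/Q)V_k(d^2bg^2n/Q)\omega_0(m/M)\omega_0(n/N)$ is a smooth bump function on the box $[M,2M]\times[N,2N]$ whose derivatives satisfy the usual scale-respecting bounds, and the $(1\star\psi)$ weights contribute a factor of size $(MN)^{\eps}$ pointwise.

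The heart of the argument is to quote Proposition 1 of [\textbf{\ref{DFI2}}] in the form that bounds a sum over a modulus $c$ of such Kloosterman-weighted bilinear forms; the shift parameter there is $ag$, the modulus is $cq$, and the smooth weight is $W$. Applying it with the summation over $c\geq C$ gives a bound in which the key features are: the square-root cancellation in the Kloosterman sum contributes $(cq)^{1/2}$; the Poisson/large-sieve step over $m$ and $n$ contributes the lengths $M^{1/2}$ and $N^{1/2}$ up to $(MN)^\eps$; the shift $ag$ enters through a divisor-type factor $(ag)^{3/4+\eps}$ (this is exactly the exponent appearing in the statement); the factor $D$ comes from the conductor of $\psi$ in the $(1\star\psi)$ coefficients after they are split as $1\star\psi$ and Poisson-summed; and the sum $\sum_{c\geq C}c^{-3/2}\ll C^{-1/2}$ accounts for the $(\sqrt{MN}/C)^{1/2}$ shape once the $cq/c^2$ and $q^{1/2}$ factors are collected into $q^{3/2}$. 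Assembling these yields
\[
\sum_{c\geq C}\frac{T_{M,N}(c)}{c^2}\ll_\eps (Cq)^\eps q^{3/2}D(ag)^{3/4}\Big(\frac{\sqrt{MN}}{C}\Big)^{1/2},
\]
as claimed.

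The main obstacle is verifying that the hypotheses of Proposition 1 of [\textbf{\ref{DFI2}}] are genuinely met here, in particular that the condition $C\geq q^{-1}\sqrt{MN}X$ is exactly what makes the Bessel weight $J_1(4\pi\sqrt{agmn}/(cq))$ behave as an admissible test function (so that its oscillation does not spoil the Poisson step) and that $ag$, which can be as large as $X^2$, is still small enough relative to the modulus $cq$ for the divisor bound $(ag)^{3/4}$ to emerge rather than something worse. One also has to check that splitting $(1\star\psi)(n)=\sum_{\ell\mid n}\psi(\ell)$ and carrying out Poisson summation in $n$ over residues mod $\ell(cq)$ produces the stated $D$-dependence and no extra loss; this is routine but is where the exponent on $D$ is pinned down. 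Once these checks are in place, the remaining manipulations — bounding $\sum_{c\geq C}c^{-3/2}$, absorbing $(MN)^\eps$ and $(1\star\psi)$-factors into $(Cq)^\eps$, and collecting powers of $q$ — are entirely mechanical.
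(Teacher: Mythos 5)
Your proposal rests on the same key tool as the paper: the lemma is proved by applying Proposition 1 of [\textbf{\ref{DFI2}}] to the sum \eqref{TMNc} on dyadic blocks $C\leq c\leq 2C$ and then summing over the blocks, and you correctly identify the role of the hypothesis $C\geq q^{-1}\sqrt{MN}X$, which (since $ag\leq X^2$) keeps the Bessel argument $4\pi\sqrt{agmn}/(cq)\ll 1$, so that $J_1(\cdot)V_kV_k\omega_0\omega_0$ is an admissible weight for that proposition.

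However, your accounting of where the stated exponents come from is not the actual mechanism, and the step you flag as the ``main obstacle'' (pinning down the $D$-dependence by splitting $(1\star\psi)(n)=\sum_{\ell\mid n}\psi(\ell)$ and Poisson summing modulo $\ell cq$) is neither needed nor compatible with quoting the proposition as a black box: in the application the coefficients $(1\star\psi)(m)$, $(1\star\psi)(n)$ are treated as arbitrary divisor-bounded sequences, and the conductor of $\psi$ plays no role in the harmonic analysis. The bound for a single dyadic block is
\begin{align*}
\ll_\varepsilon \frac{(Cq)^{\varepsilon}q}{\sqrt{MN}}\Big(\frac{\sqrt{agMN}}{Cq}\Big)^{1/2}(q+M)^{1/2}(q+agN)^{1/2}\sqrt{MN},
\end{align*}
and the factor $D$ enters only through the trivial size restriction $M,N\ll Q^{1+\varepsilon}\ll (qD)^{1+\varepsilon}$, which gives $(q+M)^{1/2}(q+agN)^{1/2}\ll (ag)^{1/2}q^{1+\varepsilon}D$. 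Likewise $(ag)^{3/4}$ is not a divisor-type loss attached to the shift: it is the product of $(ag)^{1/4}$ from $(\sqrt{agMN}/(Cq))^{1/2}$ and $(ag)^{1/2}$ from $(q+agN)^{1/2}$; and the $(\sqrt{MN}/C)^{1/2}$ shape comes from that same factor combined with the prefactor $q/\sqrt{MN}$ and the coefficient norms $\sqrt{MN}$, the dyadic summation over $C$ being a harmless geometric series, rather than from $\sum_{c\geq C}c^{-3/2}\ll C^{-1/2}$. With this bookkeeping the lemma follows exactly as you intend; as written, your route to the exponents of $D$ and $ag$ would not go through, since the Poisson-in-$\psi$ analysis you propose is not part of the argument and the correct verification is a one-line size estimate.
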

\begin{proof}
Consider the summation over a dyadic interval, 
 \begin{align*}
\sum_{C\leq c \leq 2C}\frac{T_{M,N}(c)}{c^2}&=\frac{q}{\sqrt{MN}}\sum_{C\leq c \leq 2C}\sum_{m,n}\,(1\star\psi)(m)(1\star\psi)(n)\frac{S(m,agn;cq)}{c}\\
&\qquad\qquad J_{1}\Big(\frac{4\pi\sqrt{agmn}}{cq}\Big)V_{k}\Big(\frac{d_1^2m}{Q}\Big)V_{k}\Big(\frac{d^2bg^2n}{Q}\Big)\omega_0\Big(\frac mM\Big)\omega_0\Big(\frac nN\Big).
\end{align*}
By [\textbf{\ref{DFI2}}; Proposition 1] this is bounded by
\begin{align*}
&\ll_\varepsilon \frac{(Cq)^{\varepsilon}q}{\sqrt{MN}}\Big( \frac{\sqrt{agMN}}{Cq}\Big)^{1/2}(q+M)^{1/2} ( q+agN)^{1/2}\sqrt{MN}\\
& \ll_\varepsilon (Cq)^{\varepsilon}q^{3/2}D(ag)^{3/4}\Big( \frac{\sqrt{MN}}{C}\Big)^{1/2}.
\end{align*}
Summing over all the dyadic intervals we obtain the result.
\end{proof}

The above lemma implies that the contribution of the terms $c\geq C$ to $E_k$ is
\begin{equation}\label{boundtruncationc}
\ll_{\varepsilon} (Cq)
^\eps DX^{5/2}\Big( \frac{\sqrt{MN}}{Cq}\Big)^{1/2}.
\end{equation}
The value of $C$ is chosen by making this estimate equal to the first summand in \eqref{1stboundMkOOD}, namely
\[
DX^{5/2}\Big( \frac{\sqrt{MN}}{Cq}\Big)^{1/2}=CD^{9/4}M^{-1/2}N^{1/4}X^{5/2},
\]
which gives
\begin{align}\label{valueofC}
C=q^{-1/3}D^{-5/6}\sqrt{M}
\end{align}
(note that the entire sum on
$c$ is dropped when $M<q^{2/3}D^{5/3}$). To make sure the condition $C\geq q^{-1}\sqrt{MN}X$ is satisfied we shall need
\[
D^8X^6\ll q^{1-\eps}.
\]
Performing the dyadic summation over $M$ and $N$ we therefore find that the sum over $c$ in $E_{k}$ can be truncated to $1\leq c<C$ at the cost of an error term of size
\begin{equation}\label{boundtruncationc2}
\ll_{\varepsilon} q^\eps DX^{5/2}\big( q^{-2/3}D^{5/6}\sqrt{Q}\big)^{1/2}\ll_{\varepsilon} q^{-1/12+\eps} D^{5/3}X^{5/2}.
\end{equation}
%

\subsection{Voronoi summation formulas}

We need the following Voronoi summation formulas. They are both special cases of Proposition 3.3 in [\textbf{\ref{CI2}}] (see also [\textbf{\ref{CI}}; (15.1)] for the first one).

\begin{lemma}\label{voronoi}
Let $(a,c)=1$ and let $\psi = \psi_{1} \psi_{2}$, where $\psi_{1}$ and $\psi_{2}$ are real characters modulo $D_{1}=(c,D)$ and $D_{2}=D/D_{1}$, respectively. Then for any smooth function $g$ compactly supported in $\mathbb{R}_{>0}$ we have
\begin{align*}
\sum_n (1\star\psi)(n) e \left(\frac{an}{c} \right) g(n) &= \rho_1(a,c) L(1,\psi) \int_0^\infty g(x) dx + T_1(a,c),
\end{align*}
where
\begin{align*}
\rho_1(a,c) &=\frac{1}{c} \Big(\psi(c)+\mathbf{1}_{D|c}\tau(\psi)\psi(a)\Big)
\end{align*}
and
\begin{align*}
T_1(a,c) &= \frac{2\pi i \psi_{1}(a)\psi_{2}(c)}{c\sqrt{D_{2}}}\sum_m (\psi_{1} \star \psi_{2})(m) e \Big(-\frac{\overline{a D_{2}}m}{c} \Big)\int_0^\infty J_0 \Big(\frac{4\pi\sqrt{mx}}{c\sqrt{D_{2}}} \Big)g(x) dx.
\end{align*}
\end{lemma}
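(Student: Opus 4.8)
The plan is to deduce the lemma by specializing the general Voronoi summation formula for divisor-type coefficients, namely Proposition 3.3 of [\textbf{\ref{CI2}}], to the generalized divisor function $1\star\psi$; equivalently, one may regard $\sum_{n\geq 1}(1\star\psi)(n)e(nz)$ as, up to an additive constant, the Fourier expansion of a weight-one Eisenstein series for $\Gamma_0(D)$ with nebentypus $\psi$ — which is legitimate precisely because $\psi$ is odd and primitive — and invoke the classical transformation law for such forms. The key structural input is the factorization $\psi=\psi_1\psi_2$, where $\psi_1$ has conductor $D_1=(c,D)$ (so it ``sees'' the denominator $c$) while $\psi_2$ has conductor $D_2=D/D_1$ with $(D_2,c)=1$ (so it rides along multiplicatively): the additive twist by $a/c$ resonates with $\psi_1$, producing its Gauss sum, and the $\psi_2$-part is essentially inert.

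Concretely, I would start from the Mellin representation
\begin{align*}
\sum_{n\geq 1}(1\star\psi)(n)e\Big(\frac{an}{c}\Big)g(n)=\frac{1}{2\pi i}\int_{(\sigma)}E\Big(s,\frac ac\Big)\widetilde g(s)\,ds,\qquad \sigma>1,
\end{align*}
where $\widetilde g(s)=\int_0^\infty g(x)x^{s-1}\,dx$ is entire and rapidly decaying on vertical lines (since $g$ is compactly supported in $\mathbb{R}_{>0}$) and $E(s,a/c)=\sum_{n\geq1}(1\star\psi)(n)e(an/c)n^{-s}$ is the associated Estermann-type series, and then shift the contour to $\mathrm{Re}(s)=-\delta<0$. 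Two things happen. First, one crosses a simple pole at $s=1$ coming from the $\zeta$-factor hidden in $\zeta(s)L(s,\psi)$; computing the mean value of $(1\star\psi)(n)e(an/c)$ over $n$ — a Ramanujan/Gauss-sum evaluation — identifies the residue as $\rho_1(a,c)L(1,\psi)$ with $\rho_1(a,c)=\tfrac1c\big(\psi(c)+\mathbf 1_{D\mid c}\tau(\psi)\psi(a)\big)$, the second summand occurring exactly when $D\mid c$ since only then does $e(an/c)$ detect $\psi$ modulo its full conductor; this yields the main term $\rho_1(a,c)L(1,\psi)\widetilde g(1)=\rho_1(a,c)L(1,\psi)\int_0^\infty g(x)\,dx$. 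Second, on the shifted line I would apply the functional equation of $E(s,a/c)$: expanding $E(s,a/c)$ through Hurwitz-type zeta and $L$-functions attached to residues modulo $cD$ and invoking the functional equations of $\zeta(s)$ and $L(s,\psi_1)$, one finds $E(s,a/c)$ equal, up to the elementary factor $\tfrac{\psi_1(a)\psi_2(c)}{c\sqrt{D_2}}$ and an archimedean $\Gamma$-ratio, to $\sum_{m\geq1}(\psi_1\star\psi_2)(m)\,e(-\overline{aD_2}m/c)\,m^{-(1-s)}$, the multiplicative inverse $\overline{aD_2}$ being forced by the way the Gauss sum of $\psi_1$ inverts the additive character. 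Since the gamma factor of $\zeta(s)L(s,\psi)$ for odd $\psi$ is $\Gamma(s/2)\Gamma((s+1)/2)$, which by Legendre duplication is an elementary multiple of $\Gamma(s)$ — the weight-one archimedean type — the Mellin inversion of the resulting $\Gamma$-ratio against $\widetilde g$ produces exactly $\int_0^\infty J_0\big(4\pi\sqrt{mx}/(c\sqrt{D_2})\big)g(x)\,dx$; the oddness of $\psi$ is what makes this a pure $J_0$-kernel rather than a $Y_0$/$K_0$ combination. Assembling the pieces reproduces $T_1(a,c)$ and hence the lemma.

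Alternatively one can argue by hand: split $n=d\ell$ with weight $\psi(d)$, apply Poisson summation in $\ell$ to the linear phase $e(ad\ell/c)$, and then resum over $d$; the new $d$-sum against the dual phase reconstitutes $(\psi_1\star\psi_2)(m)$ together with $\tau(\psi)$, and the $d$-averaged Fourier integrals telescope — via the classical representation of $J_0$ as a superposition of cosines — into the Bessel integral. I expect the main obstacle, in either route, to be the bookkeeping: keeping the gcd conditions, the splitting $D_1D_2=D$, the factor $\sqrt{D_2}$ inside the Bessel argument, and the inverse $\overline{aD_2}$ all mutually consistent, and in particular verifying that the non-principal main term $\tfrac1c\mathbf 1_{D\mid c}\tau(\psi)\psi(a)$ appears with precisely the stated constant and support. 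Since these are exactly the points where [\textbf{\ref{CI2}}; Proposition 3.3] (and [\textbf{\ref{CI}}; (15.1)] for the untwisted shape) has already carried out the careful accounting, the cleanest writeup is to quote that proposition and check that our $1\star\psi$, $\rho_1(a,c)$ and $T_1(a,c)$ match its hypotheses and conclusion.
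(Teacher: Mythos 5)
Your proposal is correct and matches the paper's treatment: the paper likewise gives no independent proof, simply noting that the lemma (and its companion Lemma \ref{voronoi2}) is a special case of Proposition 3.3 of [\textbf{\ref{CI2}}] (with [\textbf{\ref{CI}}; (15.1)] for this first version), which is exactly the route you settle on after your (accurate) sketch of how such a formula would be derived via the Estermann-type series, the pole at $s=1$, and the functional equation with the $\psi=\psi_1\psi_2$ splitting.
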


\begin{lemma}\label{voronoi2}
Let $(a,\ell)=1$ and let $\psi =\psi_1\psi_2$, where $\psi_1,\psi_2$ are real characters modulo $D_{1},D_{2}$. Let
\[
D_1^\flat=(D_1,\ell), \qquad D_1'=D_1/D_1^\flat,\qquad D_2^\flat=(D_2,\ell), \qquad D_2'=D_2/D_2^\flat
\]
and let $\psi=\psi_{D_1^\flat D_2^\flat}\psi_{D_1'D_2'}=\psi_{D_1^\flat D_2'}\psi_{D_1'D_2^\flat}$ be the corresponding real characters.
 Then for any smooth function $g$ compactly supported in $\mathbb{R}_{>0}$ we have
\begin{align*}
\sum_n (\psi_1\star\psi_2)(n) e \left(\frac{an}{\ell} \right) g(n) &= \rho_2(a,\ell) L(1,\psi) \int_0^\infty g(x) dx + T_2(a,\ell),
\end{align*}
where
\begin{align*}
\rho_2(a,\ell) &= \frac{1}{\ell}\bigg(\tau(\psi_1)\psi_1(a)\psi_2\Big(\frac{\ell}{D_1}\Big)+\tau(\psi_2)\psi_2(a)\psi_1\Big(\frac{\ell}{D_2}\Big)\bigg)
\end{align*}
and
\begin{align*}
T_2(a,\ell) &= \frac{2\pi \epsilon_\ell\psi_{D_1^\flat D_2^\flat}(a)}{\ell\sqrt{D_1'D_2'}}\sum_m (\psi_{D_1^\flat D_2'} \star \psi_{D_1'D_2^\flat})(m) e \Big(-\frac{\overline{a D_1'D_2'}m}{\ell} \Big)\int_0^\infty J_0 \Big(\frac{4\pi\sqrt{mx}}{\ell\sqrt{D_1'D_2'}} \Big)g(x) dx,
\end{align*}
for some $\epsilon_\ell$ independent on $a$ and $|\epsilon_\ell|=1$.
\end{lemma}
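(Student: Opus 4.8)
The plan is to deduce the lemma from the general $\mathrm{GL}(2)$ Voronoi summation formula of Conrey and Iwaniec by recognising the coefficients $(\psi_1\star\psi_2)(n)$ as those of a weight-one Eisenstein series; an essentially equivalent self-contained route is sketched at the end.

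First I would note that $\sum_{n\ge1}(\psi_1\star\psi_2)(n)n^{-s}=L(s,\psi_1)L(s,\psi_2)$, so $n\mapsto(\psi_1\star\psi_2)(n)$ is, in the weight-one normalisation, the sequence of Fourier coefficients of the holomorphic Eisenstein series $E_{\psi_1,\psi_2}(z)=c_0+\sum_{n\ge1}(\psi_1\star\psi_2)(n)e(nz)$ of weight one, level $D_1D_2$ and nebentypus $\psi=\psi_1\psi_2$. Since $\psi$ is odd, $\psi_1(-1)\psi_2(-1)=-1$, so exactly one of $\psi_1,\psi_2$ is odd — precisely the condition under which this weight-one Eisenstein series exists and is holomorphic — and its archimedean factor is $\Gamma_{\mathbb{R}}(s)\Gamma_{\mathbb{R}}(s+1)$, whose associated Bessel kernel is $J_0$; this is the Bessel function appearing in $T_2(a,\ell)$. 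Primitivity of $\psi$ forces $(D_1,D_2)=1$, so the four moduli $D_1^\flat,D_1',D_2^\flat,D_2'$ (with $D_i^\flat=(D_i,\ell)$) are pairwise coprime, and decomposing $\psi$ over them and regrouping gives both identities $\psi=\psi_{D_1^\flat D_2^\flat}\psi_{D_1'D_2'}=\psi_{D_1^\flat D_2'}\psi_{D_1'D_2^\flat}$. (Lemma \ref{voronoi} is the analogous statement for the Eisenstein series $E_{1,\psi}$; there the constituent Dirichlet series is $\zeta(s)L(s,\psi)$, whose pole at $s=1$ accounts for the main term being present unconditionally.)

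Next I would apply [\textbf{\ref{CI2}}; Proposition 3.3] to $E_{\psi_1,\psi_2}$ with additive modulus $\ell$ — legitimate since $(a,\ell)=1$ — and test function $g$. That proposition rewrites $\sum_n(\psi_1\star\psi_2)(n)e(an/\ell)g(n)$ as a main term, coming from the constant term of $E_{\psi_1,\psi_2}$ at the cusp attached to $a/\ell$, plus a dual sum coming from the non-constant Fourier expansion there. The constant-term contribution is nonzero only when the additive twist trivialises one of the characters, i.e. when $D_1\mid\ell$ or $D_2\mid\ell$; the Gauss-sum expansion then shows that in the first case the surviving edge $L$-value is $L(s,\psi_1\psi_2)=L(s,\psi)$ (using that $\psi_1$ is real) at $s=1$, with coefficient $\tau(\psi_1)\psi_1(a)\psi_2(\ell/D_1)/\ell$, and symmetrically $\tau(\psi_2)\psi_2(a)\psi_1(\ell/D_2)/\ell$ in the second, so that the sum of the two is exactly $\rho_2(a,\ell)L(1,\psi)\int_0^\infty g$. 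The non-constant part produces a sum over $m$ whose coefficients are the Fourier coefficients $(\psi_{D_1^\flat D_2'}\star\psi_{D_1'D_2^\flat})(m)$ of $E_{\psi_1,\psi_2}$ at that cusp, carrying the phase $e(-\overline{aD_1'D_2'}m/\ell)$ and the Hankel integral against $J_0(4\pi\sqrt{mx}/(\ell\sqrt{D_1'D_2'}))$; the residual product of Gauss sums and root numbers has modulus one and is independent of $a$, so it can be absorbed into $\epsilon_\ell$, leaving the prefactor $2\pi\epsilon_\ell\psi_{D_1^\flat D_2^\flat}(a)/(\ell\sqrt{D_1'D_2'})$.

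The self-contained alternative carries out the same steps by hand: insert the Mellin transform $\widetilde g(s)=\int_0^\infty g(x)x^{s-1}\,dx$ to reduce to the analytic continuation and functional equation of the Estermann-type series $\sum_n(\psi_1\star\psi_2)(n)e(an/\ell)n^{-s}$; expand via $n=d_1d_2$ together with a Gauss-sum expansion of $e(an/\ell)$, evaluate the inner sums of $\psi_i$ over arithmetic progressions in terms of $L(s,\psi_i)$, apply their functional equations, and shift the $s$-contour past $s=1$, collecting the residue (the main term) and the reflected integral (the dual sum $T_2$), with the $J_0$ kernel emerging from the gamma ratio attached to $\Gamma_{\mathbb{R}}(s)\Gamma_{\mathbb{R}}(s+1)$. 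Either way the analytic content is routine; the main obstacle is the bookkeeping of conductors and Gauss sums — verifying that the surviving edge value is exactly $L(1,\psi)$ and not $L(1,\psi')$ for some imprimitive relative of $\psi$, that the inverse $\overline{aD_1'D_2'}$ in the phase is taken to the correct modulus, that the Bessel scaling involves precisely $\sqrt{D_1'D_2'}$ (the part of $D$ coprime to $\ell$), and that the leftover sign is genuinely independent of $a$ so that it can be packaged into $\epsilon_\ell$.
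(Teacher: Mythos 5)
Your proposal is correct and follows essentially the same route as the paper, which proves both Lemma \ref{voronoi} and Lemma \ref{voronoi2} simply by invoking Proposition 3.3 of [\textbf{\ref{CI2}}]; your identification of $(\psi_1\star\psi_2)(n)$ with weight-one Eisenstein coefficients (hence the $J_0$ kernel), of the main term with the $D_1\mid\ell$ or $D_2\mid\ell$ degenerate cases, and of the character flip $\psi_{D_1^\flat D_2'}\star\psi_{D_1'D_2^\flat}$ at the dual cusp is exactly the content of that citation. The extra self-contained sketch via the functional equations of the twisted Estermann-type series is a sound, if more laborious, alternative to the citation.
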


\subsection{Initial manipulations}

Let
\begin{equation*}
g(m,n):=V_{k}\Big(\frac{d_1^2m}{Q}\Big)V_{k}\Big(\frac{d^2bg^2n}{Q}\Big)\omega_0\Big(\frac mM\Big)\omega_0\Big(\frac nN\Big).
\end{equation*}
Opening the Kloosterman sum in \eqref{TMNc} and applying Lemma \ref{voronoi} to the sum on $m$ we obtain
\begin{equation*}
T_{M,N}(c)=T_{M,N}^{*}(c)+T_{M,N}^{-}(c),
\end{equation*}
where
\begin{align*}
T_{M,N}^{*}(c)&=\frac{L(1,\psi)}{\sqrt{MN}}\sum_{n}\,(1\star\psi)(n)\Big(\psi(cq)S(agn,0;cq)+\mathbf{1}_{D|c}\tau(\psi)S_\psi(agn,0;cq)\Big)G^*(n)
\end{align*}
and
\begin{equation*}
T_{M,N}^{-}(c)=\frac{2\pi i\psi_1(-D_2)\psi_2(cq)}{\sqrt{D_{2}MN}}\sum_{m',n} (\psi_{1} \star \psi_{2})(m')(1\star\psi)(n)S_{\psi_{1}}(m'- agD_{2}n,0;cq) G^-(m',n),
\end{equation*}
with $D_{1}=(c,D)$, $D_{2}=D/D_{1}$ and $\psi=\psi_{1}\psi_{2}$ being the corresponding real characters. Here
\begin{align*}
G^*(n)&=\int_0^\infty J_{1}\Big(\frac{4\pi\sqrt{agxn}}{cq}\Big)g(x,n) dx,\\
G^-(m',n)&= \int_0^\infty J_0 \Big(\frac{4\pi\sqrt{m' x}}{cq\sqrt{D_{2}}} \Big)J_{1}\Big(\frac{4\pi\sqrt{agxn}}{cq}\Big)g(x,n) dx.
\end{align*}

The contribution of the term $T_{M,N}^{*}(c)$ to $E_k$ is easy to handle. Using Lemma \ref{Nelsonlemma} and the bound $G^*(n)\ll M$, that is bounded by 
\begin{align*}
\ll_\eps L(1,\psi)q^{-2+\eps}X\sqrt{MN}\ll _\eps q^{-1+\eps}DX.
\end{align*}

For $T_{M,N}^{-}(c)$ we write
\begin{align}\label{sumoverh}
T_{M,N}^{-}(c)&=\sum_{h\in\mathbb{Z}}S_{\psi_{1}}(h,0;cq)T_{M,N,h}^{-}(c)\nonumber\\
&=\sum_{h=0}+\sum_{h\ne0}=T_{M,N}^{OD}(c)+T_{M,N}^{OOD}(c),
\end{align}
say, where
\begin{equation}\label{T-MNh}
T_{M,N,h}^{-}(c)=\frac{2\pi i\psi_1(-D_2)\psi_2(cq)}{\sqrt{D_{2}MN}}\sum_{m'- agD_{2}n=h} (\psi_{1} \star \psi_{2})(m')(1\star\psi)(n)G^-(m',n).
\end{equation}
We denote the contributions of $T_{M,N}^{OD}(c)$ and $T_{M,N}^{OOD}(c)$ by $\mathcal{M}_k^{OD}$ and $\mathcal{M}_k^{OOD}$, respectively, and hence
\begin{align}\label{firstestimate}
E_k=\mathcal{M}_k^{OD}+\mathcal{M}_k^{OOD}+O_\varepsilon\big(q^{-1/12+\eps} D^{5/3}X^{5/2}\big),
\end{align}
by \eqref{boundtruncationc2}. 
Note that the sums over $c$ in these terms are restricted to $1\leq c<C$.

%

\section{The off-diagonal $\mathcal{M}_k^{OD}$}\label{sectionMkOD}

We have
\begin{equation*}
T_{M,N}^{OD}(c)=\frac{2\pi i\psi_1(-D_2)\psi_2(cq)S_{\psi_1}(0,0;cq)}{\sqrt{D_2MN}}\sum_{n} (\psi_1 \star \psi_2)(agD_2n)(1\star\psi)(n)G^{-}(agD_2n,n).
\end{equation*}
Note that 
\begin{equation}\label{specialcase}
S_{\psi_1}(0,0;cq)=\begin{cases}
\varphi(cq) & \text{if }D_1=1,\\
0 & \text{if }D_1>1,
\end{cases}
\end{equation}
and so
\begin{align*}
T_{M,N}^{OD}(c)&=\frac{2\pi i\psi(cq)\varphi(cq)}{\sqrt{DMN}}\sum_{n} (1 \star \psi)(agn)(1\star\psi)(n)G^{-}(agDn,n)\\
&=\frac{2\pi i\psi(cq)\varphi(cq)}{\sqrt{DMN}}\sum_{n}(1 \star \psi)(agn)(1\star\psi)(n)V_{k}\Big(\frac{d^2bg^2n}{Q}\Big)\omega_0\Big(\frac nN\Big)\\
&\qquad\qquad\int_0^\infty J_0 \Big(\frac{4\pi\sqrt{agn x}}{cq} \Big)J_{1}\Big(\frac{4\pi\sqrt{agnx}}{cq}\Big)V_{k}\Big(\frac{d_1^2x}{Q}\Big) \omega_0\Big(\frac xM\Big)dx.\\
\end{align*}

We now put this into \eqref{IOD}. At this step we would like to remove the restriction $c< C$. Using the bound $J_0 (x)J_{1}(x)\ll x$ this can be done at the cost of an error term of size
\begin{align*}
&\ll_\eps q^{-2+\eps}D^{-1/2}\frac{MN}{C}X^2\ll_\eps q^{-1/6+\eps}D^{11/6}X^2,
\end{align*}
by \eqref{valueofC}. Once the restriction on $c$ is removed we execute the dyadic summation over $M$ and $N$. After using \eqref{recursion} and a change of variables we obtain
\begin{align*}
\mathcal{M}_k^{OD}&= \frac{i}{q\sqrt{D}}\sum_{\substack{d\\ab\leq X^2}}\frac{\psi(d)\rho_2(ab)}{dab}\sum_{n}\frac{(1 \star \psi)(an)(1\star\psi)(bn)}{n}V_{k}\Big(\frac{d^2bn}{Q}\Big)S(n)\\
&\qquad\qquad+O_\eps\big(q^{-1/6+\eps}D^{11/6}X^2\big),
\end{align*}
where
\[
S(n)=\sum_{d_1}\frac{\psi(d_1)}{d_1}\sum_{c\geq1}\frac{\psi(cq)\varphi(cq)}{c}\int_0^\infty J_0 (x)J_{1}(x)V_{k}\Big(\frac{d_1^2c^2q^2x^2}{16\pi^2anQ}\Big) dx.
\]

We use \eqref{formulaV} to obtain
\begin{align*}
S(n)&=\frac{1}{2\pi i}\int_{(1-\eps)}G(u)\Gamma(1+u)^2\Big(\frac{16\pi^2anQ}{q^2}\Big)^uL(1+2u,\psi)\\
&\qquad\qquad\qquad\bigg(\sum_{c\geq1}\frac{\psi(cq)\varphi(cq)}{c^{1+2u}}\bigg)\bigg(\int_0^\infty J_0 (x)J_{1}(x)x^{-2u}dx\bigg)\frac{du}{u^{k+1}}.
\end{align*}
The sum on $c$ is
\begin{equation}\label{sumoverc}
\psi(q)q\frac{L(2u,\psi)}{L(1+2u,\psi)}+O(1).
\end{equation}
On the other hand, we also have [\textbf{\ref{GR}}; 6.574(2)]
\begin{equation}\label{integraloverx}
\int_0^\infty J_0 (x)J_{1}(x)x^{-2u}dx=\frac{\Gamma(1/2+u)\Gamma(1-u)}{2\sqrt{\pi}\Gamma(1+u)^2},
\end{equation}
and so
\begin{align*}
S(n)&= \frac{\psi(q)q}{4\pi\sqrt{\pi}i}\int_{(1-\eps)}G(u)\Gamma\Big(\frac12+u\Big)\Gamma(1-u)\Big(\frac{16\pi^2anQ}{q^2}\Big)^uL(2u,\psi)\frac{du}{u^{k+1}}+O_\eps\big(q^{\eps}D^2X^2\big).
\end{align*}
It follows that
\begin{align}\label{567}
\mathcal{M}_k^{OD}&= \frac{\psi(q)}{4\pi\sqrt{\pi D}}\sum_{\substack{d\\ab\leq X^2}}\frac{\psi(d)\rho_2(ab)}{dab}\sum_{n}\frac{(1 \star \psi)(an)(1\star\psi)(bn)}{n}V_{k}\Big(\frac{d^2bn}{Q}\Big)\\
&\qquad\quad\int_{(1-\eps)}G(u)\Gamma\Big(\frac12+u\Big)\Gamma(1-u)\Big(\frac{16\pi^2anQ}{q^2}\Big)^uL(2u,\psi)\frac{du}{u^{k+1}}+O_\eps\big(q^{-1/6+\eps}D^{11/6}X^2\big).\nonumber
\end{align}

We now show that the first term in \eqref{567} is small for every fixed $k\in\mathbb{N}$, provided that $L(1,\psi)$ is small.
By multiplicativity we have
\begin{equation*}
\sum_{\substack{n\geq 1}} \frac{(1\star\psi)(an)(1\star\psi)(bn)}{n^s}=f_{a,b}(s)\prod_{p}\bigg(1+\sum_{j\geq1}\frac{(1\star\psi)(p^j)^2}{p^{js}}\bigg)
\end{equation*}
for $\text{Re}(s)>1$, where
\begin{align*}
f_{a,b}(s)&=\prod_{\substack{p| ab}}\bigg(1+\sum_{j\geq1}\frac{(1\star\psi)(p^j)^2}{p^{js}}\bigg)^{-1}\prod_{\substack{p^{a_p}||a\\p\nmid b}}\bigg((1\star\psi)(p^{a_p})+\sum_{j\geq1}\frac{(1\star\psi)(p^{a_p+j})(1\star\psi)(p^j)}{p^{js}}\bigg)\nonumber\\
&\qquad\qquad\prod_{\substack{p^{b_p}||b\\p\nmid a}}\bigg((1\star\psi)(p^{b_p})+\sum_{j\geq1}\frac{(1\star\psi)(p^{j})(1\star\psi)(p^{b_p+j})}{p^{js}}\bigg)\\
&\qquad\qquad\prod_{\substack{p^{a_p}||a\\p^{b_p}||b}}\bigg((1\star\psi)(p^{a_p})(1\star\psi)(p^{b_p})+\sum_{j\geq1}\frac{(1\star\psi)(p^{a_p+j})(1\star\psi)(p^{b_p+j})}{p^{js}}\bigg).\nonumber
\end{align*}
We also have
[\textbf{\ref{BPZ}}; p.12]
\begin{align*}
&\prod_{p}\bigg(1+\sum_{j\geq1}\frac{(1\star\psi)(p^j)^2}{p^{js}}\bigg)\nonumber\\
&\qquad\qquad=\prod_{\substack{p|D}}\bigg(1-\frac{1}{p^s}\bigg)^{-1}\prod_{\substack{\psi(p)=-1}}\bigg(1-\frac{1}{p^{2s}}\bigg)^{-1}\prod_{\substack{\psi(p)=1}}\bigg(1+\frac{1}{p^s}\bigg)\bigg(1-\frac{1}{p^s}\bigg)^{-3},
\end{align*}
and hence
\begin{align*}
\prod_p\bigg(1+\sum_{j\geq1}\frac{(1\star\psi)(p^j)^2}{p^{js}}\bigg)&=\frac{\alpha_D(s)L(s,\psi)^2\zeta(s)^2}{\zeta(2s)},
\end{align*}
where
\[
\alpha_D(s)=\prod_{p|D}\bigg(1+\frac{1}{p^{s}}\bigg)^{-1}.
\]
Thus from \eqref{formulaV} the first term in \eqref{567} is
\begin{align*}
& \frac{\psi(q)}{8\pi^2i\sqrt{\pi D}}\sum_{\substack{ab\leq X^2}}\frac{\rho_2(ab)}{ab}\int_{(1)}\int_{(1-\eps)}G(u)G(v)\Gamma\Big(\frac12+u\Big)\Gamma(1-u)\Gamma(1+v)^2\Big(\frac{16\pi^2aQ}{q^2}\Big)^u\Big(\frac {Q}{b}\Big)^v\nonumber\\
&\qquad\qquad L(2u,\psi)L(1+2v,\psi)\sum_{n\geq1}\frac{(1\star\psi)(an)(1\star\psi)(bn)}{n^{1-u+v}}\frac{dudv}{u^{k+1}v^{k+1}}\\
&\ = \frac{\psi(q)}{8\pi^2i\sqrt{\pi D}}\sum_{\substack{ab\leq X^2}}\frac{\rho_2(ab)}{ab}\int_{(1)}\int_{(1-\eps)}G(u)G(v)\Gamma\Big(\frac12+u\Big)\Gamma(1-u)\Gamma(1+v)^2\Big(\frac{16\pi^2aQ}{q^2}\Big)^u\Big(\frac {Q}{b}\Big)^v\nonumber\\
&\qquad\qquad L(2u,\psi)L(1+2v,\psi)L(1-u+v,\psi)^2\zeta(1-u+v)^2\frac{(\alpha_Df_{a,b})(1-u+v)dudv}{u^{k+1}v^{k+1}\zeta\big(2(1-u+v)\big)}.
\end{align*}

We move the $v$-contour to $\text{Re}(v)=1/2+\eps$, crossing a double pole at $v=u$, and the new integral is trivially bounded by
\[
\ll_\eps q^{-1/2+\eps}D^{3/2}X^2,
\]
by using the bounds $\alpha_D\ll_\eps D^\eps$ and $f_{a,b}\ll_\eps (ab)^\eps$ along these contours. For the residue at $v=u$, we move the line of integration to $\text{Re}(u)=1/\log q$ and see that the contribution of the residue is bounded by
\[
\ll_\eps L(1,\psi)^2(\log q)^{2k+23+\eps}+L(1,\psi)L'(1,\psi)(\log q)^{2k+22+\eps}\ll_\eps L(1,\psi)(\log q)^{2k+24+\eps},
\]
as $f_{a,b}(1)\ll \tau(a)\tau(b)$.
So
\begin{equation}\label{500}
\mathcal{M}_k^{OD}\ll_\eps L(1,\psi)(\log q)^{2k+24+\eps}+q^{-1/6+\eps}D^{11/6}X^2.
\end{equation}

\section{Evaluating $T_{M,N,h}^{-}(c)$}

\subsection{Shifted convolution sum}

We first study the shifted convolution sum
\[
\sum_{\substack{am- bn=h}}(\psi_1\star\psi_2)(m)(1\star\psi)(n)
\]
over dyadic intervals. 

\begin{proposition}\label{propoff}
Let $h\in\mathbb{Z}\backslash\{0\}$, $M,N,P\geq1$. Suppose $\psi_1,\psi_2$ are real characters modulo $D_1, D_2$ and $\psi=\psi_1\psi_2$. 
Let $f$ be a smooth function supported on $\mathbb{R}^+\times\mathbb{R}^+$ such that
\[
m^in^jf^{(ij)}(m,n)\ll \Big(1+\frac mM\Big)^{-A}\Big(1+\frac nN\Big)^{-A}P^{i+j}
\] for any fixed $A,i,j\geq0$. Let
\begin{equation}\label{555}
\mathcal S_{a,b,h} =\sum_{\substack{am-bn=h}}(\psi_1\star\psi_2)(m)(1\star\psi)(n)f(m,n).
\end{equation}
Then
\begin{align*}
\mathcal S_{a,b,h}&=L(1,\psi)^2\mathfrak{S}_{a,b}(h)\frac{1}{ab}\int f\Big(\frac{y+h}{a},\frac{y}{b}\Big) dy\\
&\qquad\qquad+O_\eps\big((abhDMN)^\eps DP^{5/4}(abMN)^{1/4}(aM+bN)^{1/4}\big) ,
\end{align*}
where
\begin{align}\label{mathfrakS1}
\mathfrak{S}_{a,b}(h)&=\sum_{\ell\geq 1}\frac{1}{\ell_a'\ell_b'}\bigg(\tau(\psi_1)\psi(\ell_b')\psi_1(-a')\psi_2\Big(\frac{\ell_a'}{D_1}\Big)S_{\psi_1}(h,0;\ell)\nonumber\\
&\qquad\qquad\qquad\qquad+\tau(\psi_2)\psi(\ell_b')\psi_2(-a')\psi_1\Big(\frac{\ell_a'}{D_2}\Big)S_{\psi_2}(h,0;\ell)\nonumber\\
&\qquad\qquad\qquad\qquad+\mathbf{1}_{D|\ell_b'}\tau(\psi)\tau(\psi_1)\psi(b')\psi_1(-a')\psi_2\Big(\frac{\ell_a'}{D_1}\Big)S_{\psi\psi_1}(h,0;\ell)\nonumber\\
&\qquad\qquad\qquad\qquad+\mathbf{1}_{D|\ell_b'}\tau(\psi)\tau(\psi_2)\psi(b')\psi_2(-a')\psi_1\Big(\frac{\ell_a'}{D_2}\Big)S_{\psi\psi_2}(h,0;\ell)\bigg).
\end{align}
Here $a ' =a/(a, \ell)$, $b ' =b/(b, \ell)$ and $\ell_a' = \ell/(a, \ell)$, $\ell_b' = \ell/(b, \ell)$.

\end{proposition}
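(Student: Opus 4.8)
The strategy is to detect the linear equation $am-bn=h$ by a form of the circle method (the $\delta$-method of Duke--Friedlander--Iwaniec) and then to apply the Voronoi summation formulae of Lemmas \ref{voronoi} and \ref{voronoi2} to the resulting sums over $m$ and over $n$. After pulling out the common divisors of $a$, $b$ and $h$ --- which only affects the shape of $\mathfrak S_{a,b}(h)$ through elementary gcd bookkeeping --- one obtains an identity of the form
\[
\mathcal S_{a,b,h}=\sum_{\ell}\frac{1}{\ell}\sum_{\substack{c\bmod\ell\\(c,\ell)=1}}e\Big(-\frac{ch}{\ell}\Big)\sum_{m}(\psi_1\star\psi_2)(m)\,e\Big(\frac{cam}{\ell}\Big)\sum_{n}(1\star\psi)(n)\,e\Big(-\frac{cbn}{\ell}\Big)\,\widetilde f_\ell(m,n),
\]
where $\ell$ runs up to roughly $(aM+bN+|h|)^{1/2}$ and $\widetilde f_\ell$ is $f$ times the smooth weight supplied by the $\delta$-symbol, with the same localization as $f$ but with its effective oscillation parameter enlarged to $P'\ll P+(aM+bN)^{1/2}$. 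Reducing the additive characters to lowest terms, $e(cam/\ell)=e(ca'm/\ell_a')$ with $(ca',\ell_a')=1$ and likewise $e(-cbn/\ell)=e(-cb'n/\ell_b')$, puts the $m$-sum in the scope of Lemma \ref{voronoi2} (modulus $\ell_a'$) and the $n$-sum in the scope of Lemma \ref{voronoi} (modulus $\ell_b'$).

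Each Voronoi formula splits its sum into a \emph{principal} term of the shape $\rho_\bullet(\cdot,\cdot)L(1,\psi)\int(\cdot)$ and a \emph{dual} term ($T_2$, resp.\ $T_1$) carrying a Bessel factor $J_0$ and a new sum over a dual variable. Multiplying the two decompositions yields four pieces. The \emph{principal $\times$ principal} piece equals $L(1,\psi)^2$ times
\[
\sum_{\ell}\frac{1}{\ell}\sum_{\substack{c\bmod\ell\\(c,\ell)=1}}e\Big(-\frac{ch}{\ell}\Big)\,\rho_2(ca',\ell_a')\,\rho_1(-cb',\ell_b')\int\widetilde f_\ell,
\]
and here carrying out the sum over $c$ converts the four cross-terms coming from the binomials $\rho_2=\frac{1}{\ell_a'}\big(\tau(\psi_1)(\cdots)+\tau(\psi_2)(\cdots)\big)$ and $\rho_1=\frac{1}{\ell_b'}\big(\psi(\cdot)+\mathbf{1}_{D\mid\cdot}\tau(\psi)\psi(\cdot)\big)$ into exactly the four hybrid Gauss--Kloosterman sums $S_{\psi_1},S_{\psi_2},S_{\psi\psi_1},S_{\psi\psi_2}$ and the character twists appearing in \eqref{mathfrakS1}; simultaneously the $\delta$-method weight, re-summed over $\ell$, collapses the integral to the geometric quantity $\frac{1}{ab}\int f((y+h)/a,y/b)\,dy$. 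This is the asserted main term.

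The remaining three pieces (at least one dual frequency nonzero) form the error term. Each is a sum over $\ell$, over $c\bmod\ell$, and over one or two dual variables of a product of: a hybrid Gauss--Kloosterman sum modulo $\ell$ (estimated by the Weil bound), one or two Bessel factors $J_0$ (estimated by $J_0(x)\ll\min(1,x^{-1/2})$ together with the oscillation of the phase), and an oscillatory integral against $\widetilde f_\ell$, which after repeated integration by parts confines the dual variables to near-dyadic ranges. Assembling these geometric sums and balancing the Voronoi error against the contribution of large $\ell$ leads, after optimizing the $\delta$-method parameter, to the bound $O_\eps\big((abhDMN)^\eps DP^{5/4}(abMN)^{1/4}(aM+bN)^{1/4}\big)$.

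This error analysis is the crux of the proof: one must control oscillatory integrals of products of Bessel functions --- the $J_0$ produced by Voronoi, and (at the point where this proposition is applied to $T_{M,N,h}^{-}(c)$) the $J_1$ already present --- against a weight that itself oscillates at scale $P$, and then sum a genuinely multilinear expression in $\ell$, $c$ and the dual variables while tracking the nested gcd conditions with $a$, $b$ and $D$ (the splittings $D_1=(\ell,D)$, $D_1^\flat=(D_1,\ell_a')$, and so on) built into Lemma \ref{voronoi2}. A subsidiary, purely computational, difficulty is to verify that the $c$-sum in the principal piece reproduces $\mathfrak S_{a,b}(h)$ exactly --- including the indicator $\mathbf{1}_{D\mid\ell_b'}$ and the precise twists $\psi_1(-a')$, $\psi(b')$ --- which, since $\psi$ is odd, requires some care with signs of Gauss sums.
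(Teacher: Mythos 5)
Your proposal follows essentially the same route as the paper: detecting $am-bn=h$ with the Duke--Friedlander--Iwaniec $\delta$-method, applying Lemmas \ref{voronoi2} and \ref{voronoi} to the $m$- and $n$-sums, identifying the principal$\times$principal piece (whose $c$-sum of $\rho_2(ka',\ell_a')\rho_1(-kb',\ell_b')$ reproduces the four hybrid Gauss--Kloosterman terms of \eqref{mathfrakS1} and whose $\delta$-weight collapses to $\frac{1}{ab}\int f((y+h)/a,y/b)\,dy$), and bounding the dual pieces via the Weil bound, Bessel estimates and integration by parts before optimizing the $\delta$-method modulus cutoff. This matches the paper's argument in structure and in all key steps, differing only in unexecuted details of the error analysis and the precise choice of the cutoff $L=P^{-1/2}\min\{\sqrt{aM},\sqrt{bN}\}$.
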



 \begin{proof}
We use the delta method, as developed by Duke, Friedlander and Iwaniec in [\textbf{\ref{DFI}}]. 
As  usual, let
$\delta(0)=1$ and $\delta(n)=0$ for $n\ne 0$. Let $L\leq P^{-1/2}\min\{\sqrt{aM},\sqrt{bN}\}$. Then
\begin{equation}\label{deltafnc}
\delta(n) = \sum_{\ell\geq 1}\, \sideset{}{^*}\sum_{k(\text{mod}\ \ell)} e\Big(\frac{kn}{\ell}\Big) \Delta_\ell(n),
\end{equation}
where $\Delta_\ell(u)$ is some smooth function that vanishes if $|u| \leq U=L^2$ and $\ell \geq 2L$ (see [\textbf{\ref{DFI}}; Section 4]) and satisfies (see [\textbf{\ref{DFI}}; Lemma 2])
\begin{equation}\label{bdDelta}
\Delta_\ell(u)\ll \big(\ell L+L^2\big)^{-1}+\big(\ell L+|u|\big)^{-1}.
\end{equation}
We also attach to both sides of \eqref{deltafnc} a redundant factor $\phi(n)$, where $\phi(u)$ is a smooth function supported on $|u|<U$ satisfying $\phi(0)=1$ and $\phi^{(j)}(u)\ll U^{-j}$ for any fixed $j\geq0$. Applying this to \eqref{555} we see that
\begin{align}\label{summn}
\mathcal S_{a,b,h} =\sum_{\ell<2L}\ \sideset{}{^*}\sum_{k(\text{mod}\ \ell)}e\Big(\frac{-hk}{\ell}\Big)\sum_{m,n}(\psi_1\star\psi_2)(m)(1\star\psi)(n)e\Big(\frac{k(am-bn)}{\ell}\Big)F(m,n),
\end{align}
where
\[
F(m,n)=\Delta_\ell(am- bn-h)\phi(am-bn-h)f(m,n).
\]

Let $a ' =a/(a, \ell)$, $b ' =b/(b, \ell)$ and $\ell_a' = \ell/(a, \ell)$, $\ell_b' = \ell/(b, \ell)$. Let 
\[
D_1^\flat=(D_1,\ell_a'), \qquad D_1'=D_1/D_1^\flat,\qquad D_2^\flat=(D_2,\ell_a'), \qquad D_2'=D_2/D_2^\flat,
\]
\[
D_1^*=(\ell_b',D),\qquad D_2^*=D/D_1^*
\]
and let $\psi=\psi_{D_1^\flat D_2^\flat}\psi_{D_1'D_2'}=\psi_{D_1^\flat D_2'}\psi_{D_1'D_2^\flat}=\psi_{1}^*\psi_{2}^*$ be the corresponding real characters. We apply the Voronoi summation
formulas in Lemma \ref{voronoi} and Lemma \ref{voronoi2} to the sums over $n$ and $m$ in \eqref{summn}. In doing so, we can write $\mathcal{S}_{a,b,h}$ as a
principal term plus three error terms,
\begin{equation}\label{afterVoronoi}
\mathcal{S}_{a,b,h}=\mathcal{M}_{a,b,h}+\mathcal{E}_{1;a,b,h}+\mathcal{E}_{2;a,b,h}+\mathcal{E}_{3;a,b,h}.
\end{equation}

We first deal with the error terms. All the three error terms can be treated similarly, so we only focus here on one of them, 
\begin{align*}
\mathcal{E}_{1;a,b,h}&=\sum_{\ell<2L}\ \sideset{}{^*}\sum_{k(\text{mod}\ \ell)}\psi_{D_1^\flat D_2^\flat}(k)\psi_{1}^*(k)e\Big(\frac{-hk}{\ell}\Big)\\
&\qquad \sum_{m',n'}\big(\psi_{D_1^\flat D_2'}\star\psi_{D_1'D_2^\flat}\big)(m')\big(\psi_{1}^*\star\psi_{2}^*\big)(n')e\Big(-\frac{\overline{ka'D_1'D_2'}
m'}{\ell_a'}+\frac{\overline{kb'D_{2}^*}n'}{\ell_b'}\Big)H(m',n'),
\end{align*}
where
\[
H(m',n')=\frac{4\pi^2\epsilon_{a,b,\ell}}{\ell_a'\ell_b'\sqrt{D_1'D_2'D_2^*}}\int_{0}^{\infty}\int_{0}^{\infty} J_0\Big(\frac{4\pi\sqrt{m'x}}{\ell_a'\sqrt{D_{1}'D_2'}}\Big)J_0\Big(\frac{4\pi\sqrt{n'y}}{\ell_b'\sqrt{D_{2}^*}}\Big)F(x,y)dxdy
\]
for some $|\epsilon_{a,b,\ell}|=1$. The sum over $k$ is $S_{\psi_{D_1^\flat D_2^\flat}\psi_{1}^*}(-h,*;\ell)$, which is bounded by
\[
\ll_\eps (h,\ell)^{1/2}\ell^{1/2+\eps},
\]
by the Weil bound [\textbf{\ref{BC}}; Lemma 3]. Hence
\begin{align*}
\mathcal{E}_{1;a,b,h}&\ll_\eps \sum_{\ell<2L}\, (h,\ell)^{1/2}\ell^{1/2+\eps}\sum_{m',n'}\tau(m')\tau(n')\big|H(m',n')\big|.
\end{align*} 

We have 
\[
F^{(ij)}\ll_{i,j} \frac{1}{\ell L}\Big(\frac{ab}{\ell L}\Big)^{i+j}
\]
for any fixed $i,j\geq0$. Using the recurrence formula $(x^\nu J_\nu(x))'=x^\nu J_{\nu-1}(x)$, integration by parts then implies that the sums are negligible unless 
\begin{equation}\label{rangemn}
m'\ll_\eps (abDMN)^\eps\, \frac{a'^2MD_{1}'D_2'}{L^{2}}\qquad\text{and}\qquad n'\ll_\eps (abDMN)^\eps\,\frac{b'^2ND_{2}^*}{L^{2}}.
\end{equation}
For $m',n'$ in these ranges we bound $H(m',n')$ trivially using $J_0(x)\ll x^{-1/2}$ and get
\begin{align*}
H(m',n')&\ll \frac{1}{(\ell_a'\ell_b')^{1/2}(D_1'D_2'D_2^*m'n'MN)^{1/4}}\\
&\qquad\qquad\int\int\bigg|\Delta_\ell(ax-by-h)\phi(ax-by-h)f(x,y)\bigg| dxdy\nonumber\\
&\ll_\eps (abMN)^\eps \frac{1}{ab(\ell_a'\ell_b')^{1/2}(D_1'D_2'D_2^*m'n'MN)^{1/4}}\min\{aM,bN\}\int\big|\Delta_\ell(u)\big| du\nonumber\\
&\ll_\eps (abMN)^\eps \frac{(MN)^{3/4}}{(\ell_a'\ell_b')^{1/2}(D_1'D_2'D_2^*m'n')^{1/4}(aM+bN)},
\end{align*}
by \eqref{bdDelta}. So summing over $m',n'$ in the range \eqref{rangemn} we obtain
\[
\sum_{m',n'}\tau(m')\tau(n')\big|H(m',n')\big|\ll_\eps (abDMN)^\eps L^{-3}\frac{(D_1'D_2'D_2^*)^{1/2}(a'b'MN)^{3/2}}{(\ell_a'\ell_b')^{1/2}(aM+bN)},
\]
and hence
\begin{align}\label{boundE}
\mathcal{E}_{1;a,b,h}\ll_\eps (abhDMN)^\eps L^{-5/2}D\frac{(abMN)^{3/2}}{aM+bN}.
\end{align}

We now return to the principal term $\mathcal{M}_{a,h}$ in \eqref{afterVoronoi}. This corresponds to the product of the two constant terms after the applications of Lemma \ref{voronoi} and Lemma \ref{voronoi2}, and hence
\begin{align}\label{MVoronoi}
\mathcal{M}_{a,h}&=L(1,\psi)^2\sum_{\ell<2L}\ \sideset{}{^*}\sum_{k(\text{mod}\ \ell)}\rho_1(-kb',\ell_b')\rho_2(ka',\ell_a')e\Big(\frac{-hk}{\ell}\Big)\nonumber\\
&\qquad\qquad\int_{0}^{\infty}\int_{0}^{\infty}\Delta_\ell(ax- by-h)\phi(ax- by-h)f(x,y)dxdy.
\end{align}
We have
\begin{align*}
&\rho_1(-kb',\ell_b')\rho_2(ka',\ell_a')\\
&\qquad\quad=\frac{1}{\ell_a'\ell_b'}\big(\psi(\ell_b')+\mathbf{1}_{D|\ell_b'}\tau(\psi)\psi(-kb')\big)\bigg(\tau(\psi_1)\psi_1(ka')\psi_2\Big(\frac{\ell_a'}{D_1}\Big)+\tau(\psi_2)\psi_2(ka')\psi_1\Big(\frac{\ell_a'}{D_2}\Big)\bigg).
\end{align*}
So
\begin{align}\label{sumk1}
&\sideset{}{^*}\sum_{k(\text{mod}\ \ell)}\rho_1(-kb',\ell_b')\rho_2(ka',\ell_a')e\Big(\frac{-hk}{\ell}\Big)\nonumber\\
&\quad=\frac{1}{\ell_a'\ell_b'}\bigg(\tau(\psi_1)\psi(\ell_b')\psi_1(-a')\psi_2\Big(\frac{\ell_a'}{D_1}\Big)S_{\psi_1}(h,0;\ell)+\tau(\psi_2)\psi(\ell_b')\psi_2(-a')\psi_1\Big(\frac{\ell_a'}{D_2}\Big)S_{\psi_2}(h,0;\ell)\nonumber\\
&\qquad\qquad\qquad\qquad+\mathbf{1}_{D|\ell_b'}\tau(\psi)\tau(\psi_1)\psi(b')\psi_1(-a')\psi_2\Big(\frac{\ell_a'}{D_1}\Big)S_{\psi\psi_1}(h,0;\ell)\nonumber\\
&\qquad\qquad\qquad\qquad+\mathbf{1}_{D|\ell_b'}\tau(\psi)\tau(\psi_2)\psi(b')\psi_2(-a')\psi_1\Big(\frac{\ell_a'}{D_2}\Big)S_{\psi\psi_2}(h,0;\ell)\bigg).
\end{align}

For the double integrals in \eqref{MVoronoi}, a change of variables gives
\begin{align*}
&\int_{0}^{\infty}\int_{0}^{\infty}\Delta_\ell(ax-by-h)\phi(ax-by-h)f(x,y)dxdy\\
&\qquad\qquad=\frac{1}{ab}\int\int\Delta_\ell(u)\phi(u)f\Big(\frac{y+u+h}{a},\frac{y}{b}\Big) dydu.
\end{align*}
By \eqref{bdDelta} this is bounded by
\[
\ll_\eps (abMN)^\eps\frac{\min\{aM,bN\}}{ab}\ll_\eps (abMN)^\eps\frac{MN}{aM+bN}.
\]
Moreover, if $\ell\ll L^{1-\eps}$, then by [\textbf{\ref{DFI}}; (18)], this is equal to
\[
\frac{1}{ab}\int f\Big(\frac{y+h}{a},\frac{y}{b}\Big) dy+O_A\big(L^{-A}\big)
\]
for any fixed $A>0$. So in view of \eqref{sumk1} and the Weil bound, we can first restricted the sum over $\ell$ in \eqref{MVoronoi} to $\ell\ll L^{1-\eps}$, and then extend it to all $\ell\geq1$ at the cost of an error term of size
\begin{align*}
O_\eps\big((abhMN)^\eps L(1,\psi)^2 (h,D)^{1/2}L^{-1/2}MN(aM+bN)^{-1}\big).
\end{align*}
Hence
\begin{align}\label{Mab}
\mathcal{M}_{a,b,h}&=L(1,\psi)^2\mathfrak{S}_{a,b}(h)\frac{1}{ab}\int f\Big(\frac{y+h}{a},\frac{y}{b}\Big) dy\nonumber\\
&\qquad\qquad+O_\eps\big((abhMN)^\eps L(1,\psi)^2 (h,D)^{1/2}L^{-1/2}MN(aM+bN)^{-1}\big),
\end{align}
where $\mathfrak{S}_{a,b}(h)$ is defined in \eqref{mathfrakS1}.

Finally, combining \eqref{boundE} and \eqref{Mab}, and choosing $L=P^{-1/2}\min\{\sqrt{aM},\sqrt{bN}\}$ we obtain the proposition.
\end{proof}

\subsection{Evaluating $T_{M,N,h}^{-}(c)$}

Recall that
\[
G^-(m',n)=\int_0^\infty J_0 \Big(\frac{4\pi\sqrt{ m'x}}{cq\sqrt{D_{2}}} \Big)J_{1}\Big(\frac{4\pi\sqrt{agnx}}{cq}\Big)g(x,n) dx.
\]
Let $$P=1+\frac{\sqrt{agMN}}{cq}\qquad\text{and}\qquad M'=\frac{c^2q^2D_2P^2}{M}.$$ Note that $M'>agD_2N$. With respect to $x$, we do nothing if $m'\leq M'$, but if $m'>M'$ we integrate by parts in $x$ several times. Then we differentiate $i$ times in $m'$ and $j$ times in $n$. Using the bounds $J_0(x)\ll (1+x)^{-1/2}$ and $J_1(x)\ll x(1+x)^{-3/2}$ we get
\begin{equation}\label{boundforG-}
m'^in^j G^{-(ij)}(m',n)\ll_{A,i,j} \Big(1+\frac{m'}{M'}\Big)^{-A}\Big(1+\frac{n}{N}\Big)^{-A} \frac{M\sqrt{agMN}}{cq}P^{i+j-2}
\end{equation}
for any fixed $A,i,j\geq0$. Applying Proposition \ref{propoff} we obtain
\begin{align}\label{formulaTMNh-}
T_{M,N,h}^{-}(c)&=2\pi iL(1,\psi)^2\frac{\psi_1(-D_2)\psi_2(cq)}{\sqrt{D_{2}MN}}\mathfrak{S}_{1,agD_2}(h)W(h)+O_\eps\big(q^\eps D^{5/4}(ag)^{3/4}M^{1/2}(NP)^{1/4}\big),
\end{align}
where $\mathfrak{S}_{a,b}(h)$ is given in \eqref{mathfrakS1} and
\[
W(h)=\frac{1}{agD_2} \int\int J_0 \Big(\frac{4\pi\sqrt{ x(y+h)}}{cq\sqrt{D_{2}}} \Big)J_{1}\Big(\frac{4\pi\sqrt{xy}}{cq\sqrt{D_2}}\Big)g\Big(x,\frac{y}{agD_2}\Big) dxdy.
\] Note that as in the proof of Proposition \ref{propoff} we can truncate the sum over $\ell$ in $\mathfrak{S}_{1,agD_2}(h)$ to $\ell < cq$ and the resulting error is absorbed  in the above error term.

\section{The off-off-diagonal $\mathcal{M}_k^{OOD}$ - Initial manipulations}

In view of \eqref{formulaTMNh-} and \eqref{mathfrakS1} we write
\begin{equation}\label{formulaTMNh-1}
T_{M,N,h}^{-}(c)=\sum_{j=1}^{4}T_{j;M,N,h}^{-}(c)+O_\eps\big(q^\eps D^{5/4}(ag)^{3/4}M^{1/2}(NP)^{1/4}\big).
\end{equation}
Observe from \eqref{boundforG-} that the contribution of the terms with $m'\gg q^\eps M'$ is negligible. We also have $M'>agD_2N$ so we can restrict $|h|\leq H=q^\eps M'$ in \eqref{sumoverh} at the cost of a negligible error term. Hence, using Lemma \ref{Nelsonlemma} the contribution of the $O$-term in \eqref{formulaTMNh-1} to $\mathcal{M}_k^{OOD}$ is
\begin{align}\label{1stboundMkOOD}
&\ll_\eps q^\eps D^{9/4}\big(CM^{-1/2}N^{1/4}X^{5/2}+q^{-9/4}M^{5/8}N^{11/8}X^{19/4}\big)\nonumber\\
&\ll_\eps q^{-1/12+\eps}D^{5/3}X^{5/2}+q^{-1/4+\eps}D^{17/4}X^{19/4},
\end{align}
by \eqref{valueofC}.

All the four terms $T_{j;M,N,h}^{-}(c)$ are in similar forms, so we only illustrate how to estimate the contribution of $T_{1;M,N,h}^{-}(c)$ to $\mathcal{M}_k^{OOD}$.  We have
\begin{align*}\label{sumhT-}
&\sum_{h\ne0}S_{\psi_1}(h,0;cq)T_{1;M,N,h}^{-}(c)\\
&\qquad=2\pi iL(1,\psi)^2\frac{\tau(\psi_1)\psi_1(D_2)\psi_2(cq)}{\sqrt{D_{2}MN}}\sum_{\ell<cq}\frac{\psi(\ell')\psi_2(\ell/D_1)}{\ell\ell'}\sum_{h\ne0}S_{\psi_1}(h,0;cq)S_{\psi_1}(h,0;\ell)W(h),
\end{align*}
where $\ell'=\ell/(agD_2,\ell)$. Now we would like to re-extend the sum over $c$ to $c\geq 1$ but this step is not straightforward as in Section \ref{sectionMkOD}. Instead we denote the contribution of the complete sum $c\geq 1$ by $\mathcal{M}_{1;k}^{OOD}$ and the contribution of the terms with $c\geq C$ by $\mathcal{M}_{2;k}^{OOD}$,
\begin{align*}
\mathcal{M}_{1;k}^{OOD}&=\frac{2\pi iL(1,\psi)^2}{q^{2}}\sum_{\substack{d_1,d\\abg\leq X^2}}\frac{\psi(d_1)\psi(d)\mu(g)(1\star\psi)(b)\rho_2(abg)}{d_1d\sqrt{a}bg^{3/2}}\\
&\qquad \sum_{M,N}\sum_{c\geq1}\frac{\tau(\psi_1)\psi_1(D_2)\psi_2(cq)}{c^2\sqrt{D_{2}MN}}\sum_{\ell<cq}\frac{\psi(\ell')\psi_2(\ell/D_1)}{\ell\ell'}\sum_{h\ne0}S_{\psi_1}(h,0;cq)S_{\psi_1}(h,0;\ell)W(h)
\end{align*}
and
\begin{align*}
\mathcal{M}_{2;k}^{OOD}&=\frac{2\pi iL(1,\psi)^2}{q^{2}}\sum_{\substack{d_1,d\\abg\leq X^2}}\frac{\psi(d_1)\psi(d)\mu(g)(1\star\psi)(b)\rho_2(abg)}{d_1d\sqrt{a}bg^{3/2}}\\
&\qquad \sum_{M,N}\sum_{c\geq C}\frac{\tau(\psi_1)\psi_1(D_2)\psi_2(cq)}{c^2\sqrt{D_{2}MN}}\sum_{\ell<cq}\frac{\psi(\ell')\psi_2(\ell/D_1)}{\ell\ell'}\sum_{h\ne 0}S_{\psi_1}(h,0;cq)S_{\psi_1}(h,0;\ell)W(h).\nonumber
\end{align*} 
Then
\begin{equation}\label{501}
\mathcal{M}_{k}^{OOD}=\mathcal{M}_{1;k}^{OOD}-\mathcal{M}_{2;k}^{OOD}+O_\eps\big( q^{-1/12+\eps}D^{5/3}X^{5/2}\big)+O_\eps\big(q^{-1/4+\eps}D^{17/4}X^{19/4}\big),
\end{equation}
by \eqref{1stboundMkOOD}. 


We will estimate $\mathcal{M}_{1;k}^{OOD}$ and $\mathcal{M}_{2;k}^{OOD}$ in the next two sections. The term $\mathcal{M}_{1;k}^{OOD}$ is delicate and we first provide an outline. Out strategy is as follows:
\begin{enumerate}
\item We start by bringing in the Mellin transforms of the Bessel $J$-functions and formula \eqref{formulaV} to express $W(h)$ as a triple contour integral.
\item We then convert the Gauss-Ramanujan sums to divisibility conditions.
\item The sum involving $h$ after Step 1 is
\[
\sum_{h>0}\int_{(w_0)}\int_{(v_0)}\int_{(u_0)}S_{\psi_1}(h,0;cq)S_{\psi_1}(h,0;\ell)h^{1-u-v-w}dudvdw,
\]
provided that
\[
\begin{cases}
u_0,v_0,w_0>0,\\
u_0+w_0<1,\\
v_0+w_0<1,\\
 u_0+v_0+w_0>1.
\end{cases}
\]
We want to move the sum over $h$ inside the integrals and replace it with the zeta-functions. This requires $\text{Re}(u+v+w)>2$, but note that the above conditions force $u_0+v_0+w_0<2$. We therefore have to move the contours of integration several times to make sure the $h$-sum converges absolutely. The extra factor $G(u)$ in the integral expression of $V_k$ in  \eqref{formulaV} is crucial in this step.
\end{enumerate}

\section{Evaluating $\mathcal{M}_{1;k}^{OOD}$}\label{afeood}

\subsection{Bringing in the Mellin transforms}

Removing the partition of unity we obtain
\begin{equation*}
\sum_{M,N}W(h)=\frac{1}{\sqrt{agD_2}} \int\int J_0 \Big(\frac{4\pi\sqrt{ x(y+h)}}{cq\sqrt{D_{2}}} \Big)J_{1}\Big(\frac{4\pi\sqrt{xy}}{cq\sqrt{D_{2}}}\Big)V_{k}\Big(\frac{d_1^2x}{Q}\Big)V_{k}\Big(\frac{d^2bgy}{aD_2Q}\Big) \frac{dxdy}{\sqrt{xy}}.
\end{equation*}
We bring in the Mellin transforms of $J_0(x)$ and $J_1(x)$: 
\[
J_0(x)=\frac{1}{2\pi i}\int_{(v_0)}\frac{\Gamma(v)}{\Gamma(1-v)}\Big(\frac{x}{2}\Big)^{-2v}dv
\]
and
\[
J_1(x)=\frac{1}{2\pi i}\int_{(w_0)}\frac{\Gamma(w)}{\Gamma(2-w)}\Big(\frac{x}{2}\Big)^{1-2w}dw
\]
for $v_0,w_0>0$ (see, for instance, [\textbf{\ref{GR}}; 6.561(14), 8.412(4)]). These together with the integral formula \eqref{formulaV} for the second factor $V_k$ imply that
\begin{align*}
&\sum_{M,N}W(h)=\frac{1}{ \sqrt{agD_2}}\frac{1}{(2\pi i)^3}\int_{(w_0)}\int_{(v_0)}\int_{(u_0)}G(u)\frac{\Gamma(1+u)^2\Gamma(v)\Gamma(w)}{\Gamma(1-v)\Gamma(2-w)}\Big(\frac{d^2bg}{aD_2Q}\Big)^{-u}\\
&\qquad\qquad\Big(\frac{2\pi}{cq\sqrt{D_{2}}}\Big)^{1-2v-2w}\bigg(\int x^{-(v+w)}V_{k}\Big(\frac{d_1^2x}{Q}\Big)dx\bigg)\bigg(\int (y+h)^{-v}y^{-(u+w)}dy\bigg)\frac{dudvdw}{u^{k+1}}.
\end{align*}

By \eqref{formulaV} and the inversion Mellin transform, the $x$-integral is 
\[
\Big(\frac{Q}{d_1^2}\Big)^{1-v-w}G(1-v-w)\frac{\Gamma(2-v-w)^2}{(1-v-w)^{k+1}}
\]
if
\begin{equation}\label{contour1}
v_0+w_0<1.
\end{equation}
For the $y$-integral, note that if $h<0$ then the integral is restricted to $y>-h$, and if $h>0$ then it is over $y>0$. For absolute convergence we need to impose the conditions
\begin{equation}\label{contour2}\begin{cases}
 u_0+v_0+w_0>1,\\
u_0+w_0<1.
\end{cases}\end{equation} Under these assumptions, the $y$-integral is equal to (see, for instance, [\textbf{\ref{GR}}; 17.43(21), 17.43(22)])
\[
|h|^{1-u-v-w}\times\begin{cases}
\frac{\Gamma(u+v+w-1)\Gamma(1-v)}{\Gamma(u+w)}& \qquad\text{if }h<0,\\
\frac{\Gamma(u+v+w-1)\Gamma(1-u-w)}{\Gamma(v)} & \qquad\text{if }h>0 .
\end{cases}
\]
Hence provided that $u_0,v_0,w_0$ satisfy conditions \eqref{contour1} and \eqref{contour2}  we have
\begin{align}\label{MOODintegral}
\mathcal{M}_{1;k}^{OOD}&=L(1,\psi)^2\sum_{h>0}\frac{1}{(2\pi i)^2}\int_{(w_0)}\int_{(v_0)}\int_{(u_0)}\gamma(u,v,w)I_h(u,v,w)\frac{dudvdw}{u^{k+1}(1-v/2-w)^{k+1}},
\end{align}
where
\begin{align}\label{gammauvw}
\gamma(u,v,w)&=G(u)G(1-v-w)\frac{\Gamma(1+u)^2\Gamma(w)\Gamma(2-v-w)^2\Gamma(u+v+w-1)}{\Gamma(2-w)}\nonumber\\
&\qquad\qquad\Big(\frac{\Gamma(v)}{\Gamma(u+w)}+\frac{\Gamma(1-u-w)}{\Gamma(1-v)}\Big)
\end{align}
and
\begin{align*}
&I_h(u,v,w)=L(1+2u,\psi)L(3-2v-2w,\psi)\frac{(2\pi)^{1-2v-2w}}{q^{3-2v-2w}}Q^{1+u-v-w}\sum_{\substack{abg\leq X^2}}\frac{\mu(g)(1\star\psi)(b)\rho_2(abg)}{a^{1-u}b^{1+u}g^{2+u}}\\
&\qquad\qquad\sum_{c\geq1}\frac{\tau(\psi_1)\psi_1(D_2)\psi_2(cq)}{c^{3-2v-2w}D_2^{3/2-u-v-w}}\sum_{\ell\geq1}\frac{\psi(\ell')\psi_2(\ell/D_1)}{\ell\ell'}S_{\psi_1}(h,0;cq)S_{\psi_1}(h,0;\ell)h^{1-u-v-w}.
\end{align*}

To make sure the sums over $a$ and $b$ are controllable we want $u_0\asymp 1/\log q$. Later we also want $(1-w_0)\asymp 1/\log q$. So we choose
\[
u_0=\frac{2}{\log q},\qquad v_0=\frac{2}{\log q}\qquad\text{and}\qquad w_0=1-\frac{3}{\log q}.
\]

\subsection{Converting the Gauss-Ramanujan sums}

We write $c=c_1c_2$, $\ell=\ell_1\ell_2$ and $h=h_1h_2$, where $c_1\ell_1h_1|D_1^\infty$ and $(c_2\ell_2h_2,D_1)=1$. Using Lemma \ref{Nelsonlemma} the product of the Gauss-Ramanujan sums vanishes unless $D_1h_1 = c_1=\ell_1$, and in that case we have
\begin{equation}\label{GRsum1}
S_{\psi_1}(h,0;cq)=h_1\tau(\psi_1)\psi_1(c_2qh_2)\sum_{\substack{rc_2'=c_2q\\r|h_2}}\mu(c_2')r
\end{equation}
and
\begin{equation}\label{GRsum2}
S_{\psi_1}(h,0;\ell)=h_1\tau(\psi_1)\psi_1(\ell_2h_2)\sum_{\substack{s\ell_2'=\ell_2\\s|h_2}}\mu(\ell_2')s.
\end{equation}

There are two cases: $(q,c_2')=1$, which will force $q|r|h_2$, or $q|c_2'$. The former leads to two different cases to consider: $(q,s)=1$ or $q|s|h$. 

\subsection{The $h$-sum}

We consider the terms with $(q,c_2's)=1$. The other cases can be dealt with in the same way and can be adsorbed in the expression \eqref{Itozetafc} below.  By \eqref{GRsum1} and \eqref{GRsum2} we get
\begin{align*}
&\sum_{c\geq1}\frac{\tau(\psi_1)\psi_1(D_2)\psi_2(cq)}{c^{3-2v-2w}D_2^{3/2-u-v-w}}\sum_{\ell< q}\frac{\psi(\ell')\psi_2(\ell/D_1)}{\ell\ell'}\sum_{h>0}S_{\psi_1}(h,0;cq)S_{\psi_1}(h,0;\ell)h^{1-u-v-w}\\
&\quad=\frac{\psi(q)q^{2-u-v-w}}{L(3-2v-2w,\psi)D^{3/2-u-v-w}}\sum_{\substack{D_1D_2=D\\D_1|ag}}\frac{\tau(\psi_1)^3\psi_1(D_2)\psi_2(D_1)}{D_1^{5/2+u-v-w}}\sum_{h_1|D_1^\infty}\frac{1}{h_1^{2+u-v-w}}\\
&\qquad\sum_{(h_2,D_1)=1}h_2^{1-u-v-w} \sum_{\substack{r|h_2\\s|h_2\\(s,q)=1}}\frac{\psi(rs)}{r^{2(1-v-w)}s}\sum_{\ell\geq1}\frac{\mu(l)\psi(h_1s\ell/(ag/D_1,h_1s\ell))\psi(\ell)(ag/D_1,h_1s\ell)}{\ell^2}.
\end{align*}
Writing
\[
\sum_{(h_2,D_1)=1} \sum_{\substack{r|h_2\\s|h_2\\(s,q)=1}}=\sum_{\substack{r,s\geq1\\(s,q)=1}}\sum_{\substack{[r,s]|h_2\\(h_2,D_1)=1}}
\]
and using the Mobius inversion formula we see that
\begin{align}\label{Itozetafc}
\sum_{h>0}I_h(u,v,w)&=D^{-1/2+2u}\frac{L(1+2u,\psi)L(1+u-v-w,\psi)L_q(u+v+w,\psi)}{\zeta_{qD}(1+2u)}\nonumber\\
&\qquad\qquad \zeta_{qD}(2+u-v-w)\zeta(u+v+w-1)J(u,v,w),
\end{align}
where
\begin{align}\label{propertiesJ}
J(u,v,w)\ll_\eps (\log q)^{16+\eps}X^{2\text{Re}(u)}
\end{align}
uniformly for $\text{Re}(u)>0$, $\text{Re}(v)=v_0$ and $\text{Re}(w)=w_0$.

For convergence we need
\begin{align}\label{contour3}
\begin{cases}
\text{Re}(u)>\text{Re}(v+w),\\
\text{Re}(u+v+w)>2.
\end{cases}
\end{align}
Hence in order to be able to use \eqref{Itozetafc} we need to move the contours of integration several times.

\subsection{Moving the contours}\label{movingcontour}

We first move the $u$-contour in \eqref{MOODintegral} to the right to $\text{Re}(u)=1+2/\log q$, encountering a simple pole at $u=1-w$ from the gamma factor $\Gamma(1-u-w)$ in $\gamma(u,v,w)$. In doing so we obtain 
\[
\mathcal{M}_{1;k}^{OOD}=\mathcal{M}_{1;k}^{OOD'}-\mathcal{R}_1^{OOD},
\]
where $\mathcal{M}_{1;k}^{OOD'}$ is the new integral and
\begin{align}\label{R1OOD}
\mathcal{R}_1^{OOD}&=-L(1,\psi)^2\sum_{h>0}\frac{1}{(2\pi i)^2}\int_{(w_0)}\int_{(v_0)}G(1-w)G(1-v-w)\\
&\qquad\qquad \frac{\Gamma(2-w)\Gamma(w)\Gamma(2-v-w)^2\Gamma(v)}{\Gamma(1-v)}I_h(1-w,v,w)\frac{dvdw}{(1-w)^{k+1}(1-v-w)^{k+1}}.\nonumber
\end{align}

With $\mathcal{M}_{1;k}^{OOD'}$ we can move the sum over $h$ inside the integrals and replace the sum over $h$ using \eqref{Itozetafc} (see condition \eqref{contour3}). We then shift the $u$-contour to $\text{Re}(u)=4/\log q$, crossing no poles. Note that the pole of $\zeta(u+v/2+w-1)$ is cancelled as $\gamma(2-v/2-w,v,w)=0$. Hence
\begin{align*}
\mathcal{M}_{1;k}^{OOD'}&=L(1,\psi)^2\frac{1}{(2\pi i)^3}\int_{(w_0)}\int_{(v_0)}\int_{(4/\log q)}\gamma(u,v,w)D^{-1/2+2u}\\
&\qquad\quad\frac{L(1+2u,\psi)L(1+u-v-w,\psi)L_q(u+v+w,\psi)}{\zeta_{qD}(1+2u)}\\
&\qquad\qquad\quad\zeta_{qD}(2+u-v-w)\zeta(u+v+w-1)J(u,v,w)\frac{dudvdw}{u^{k+1}(1-v-w)^{k+1}}.
\end{align*}
Bounding this trivially using \eqref{propertiesJ} we get
\[
\mathcal{M}_{1;k}^{OOD'}\ll_\eps L(1,\psi)^2(\log q)^{2k+20+\eps}.
\]

For $\mathcal{R}_1^{OOD}$ we move the $w$-contour in \eqref{R1OOD} to $\text{Re}(w)=-2/\log q$, followed by moving the $v$-contour to $\text{Re}(v)=1+1/\log q$. Both times we cross no poles as the pole at $w=0$ of $\Gamma(w)$ is cancelled by the factor $G(1-w)$. Along these new contours, conditions \eqref{contour3} are now satisfied and hence the sum over $h$ can be moved inside the integrals and replaced by \eqref{Itozetafc}. So
\begin{align*}
&\mathcal{R}_1^{OOD}=-L(1,\psi)^2\frac{1}{(2\pi i)^2}\int_{(-2/\log q)}\int_{(1+1/\log q)}G(1-w)G(1-v-w)\\
&\qquad\quad \frac{\Gamma(2-w)\Gamma(w)\Gamma(2-v-w)^2\Gamma(v)}{\Gamma(1-v)}D^{3/2-2w} \frac{L(3-2w,\psi)L(2-v-2w,\psi)L_q(1+v,\psi)}{\zeta_{qD}(3-2w)}\\
&\qquad\qquad\qquad\zeta_{qD}(3-v-2w)\zeta(v)J(1-w,v,w)\frac{dvdw}{(1-w)^{k+1}(1-v/2-w)^{k+1}}.\nonumber
\end{align*}

We next move the $v$-contour back to $\text{Re}(v)=v_0$, and then the $w$-contour back to $\text{Re}(w)=w_0$. Again there are no poles as $G(1-w)\Gamma(w)$ has no pole at $w=0$ and $\zeta(v)/\Gamma(1-v)$ has no pole at $v=1$. We bound the resulting integral trivially using  \eqref{propertiesJ} and get
\[
\mathcal{R}_1^{OOD}\ll_\eps L(1,\psi)^2(\log q)^{2k+19+\eps}.
\]
It follows that
\begin{equation}\label{502}
\mathcal{M}_{1;k}^{OOD}\ll_\eps L(1,\psi)^2(\log q)^{2k+20+\eps}.
\end{equation}

\section{Evaluating $\mathcal{M}_{2;k}^{OOD}$}\label{IkODlast}

To estimate $\mathcal{M}_{2;k}^{OOD}$ 
%
we shall use the following lemma.

\begin{lemma}\label{orthoRam}
Let $c_1,c_2\in\mathbb{N}$ with $c_1\ne c_2$ and let $f$ be a smooth compactly supported function on $\mathbb{R}$. Then we have
\[
\sum_h S_{\chi}(h,0;c_1)S_{\chi}(h,0;c_2)f(h)=\sum_{u\ne 0}\widehat{f}\Big(\frac{u}{[c_1,c_2]}\Big)\nu_{c_1,c_2}(u),
\]
with $|\nu_{c_1,c_2}(u)|\leq \varphi((c_1,c_2))$.
\end{lemma}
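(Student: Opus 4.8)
The plan is to expand the two Gauss--Ramanujan sums into a double sum over residues, push the (finite) sum over $h$ inside, and apply Poisson summation; this produces the asserted identity together with an explicit formula for $\nu_{c_1,c_2}$, after which the two genuine assertions --- the bound $|\nu_{c_1,c_2}(u)|\le\varphi((c_1,c_2))$ and the vanishing $\nu_{c_1,c_2}(0)=0$ --- are extracted by a prime-by-prime analysis.

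In detail, writing $S_{\chi}(h,0;c_i)=\sideset{}{^*}\sum_{a(\text{mod}\ c_i)}\chi(a)e(ha/c_i)$, multiplying the two sums and interchanging the order of summation gives
\[
\sum_h S_{\chi}(h,0;c_1)S_{\chi}(h,0;c_2)f(h)=\sideset{}{^*}\sum_{a(c_1)}\ \sideset{}{^*}\sum_{b(c_2)}\chi(a)\chi(b)\sum_h f(h)e\Big(h\Big(\tfrac{a}{c_1}+\tfrac{b}{c_2}\Big)\Big).
\]
With $d=(c_1,c_2)$ and $\delta=[c_1,c_2]=c_1c_2/d$ one has $\tfrac{a}{c_1}+\tfrac{b}{c_2}=\tfrac{a(c_2/d)+b(c_1/d)}{\delta}$, so setting $\widehat f(\xi)=\int_{\mathbb R}f(x)e(-x\xi)\,dx$, Poisson summation yields $\sum_h f(h)e(ht/\delta)=\sum_{u\equiv -t\,(\mathrm{mod}\ \delta)}\widehat f(u/\delta)$ for every integer $t$. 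Grouping the pairs $(a,b)$ according to the residue of $a(c_2/d)+b(c_1/d)$ modulo $\delta$ then gives exactly $\sum_{u\in\mathbb Z}\widehat f(u/\delta)\,\nu_{c_1,c_2}(u)$, with
\[
\nu_{c_1,c_2}(u)=\sideset{}{^*}\sum_{a(c_1)}\ \sideset{}{^*}\sum_{\substack{b(c_2)\\ a(c_2/d)+b(c_1/d)\equiv -u\pmod{\delta}}}\chi(a)\chi(b);
\]
absolute convergence of the $u$-sum is immediate since $\widehat f$ is Schwartz and $\nu_{c_1,c_2}$ will be bounded.

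For the bound I would factor $\nu_{c_1,c_2}(u)=\prod_{p\mid c_1c_2}\nu_p$ by the Chinese Remainder Theorem. Fixing $p$ and setting $\alpha=v_p(c_1)$, $\beta=v_p(c_2)$, we may assume $\alpha\le\beta$ by the symmetry of $\nu_{c_1,c_2}$ in $c_1$ and $c_2$; then $v_p(d)=\alpha$, $v_p(\delta)=\beta$, and after clearing a unit the local congruence takes the shape $\lambda a p^{\beta-\alpha}+b\equiv e_p\pmod{p^\beta}$ for some $\lambda\in(\mathbb Z/p^\beta)^{\times}$ and some $e_p$ depending on $u$. For each admissible $a$ (of which there are at most $\varphi(p^\alpha)$) this determines $b$ uniquely modulo $p^\beta$, so $|\nu_p|\le\varphi(p^\alpha)=\varphi(p^{v_p((c_1,c_2))})$, and multiplying over $p$ gives $|\nu_{c_1,c_2}(u)|\le\varphi((c_1,c_2))$. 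For the vanishing at $u=0$ one has $e_p\equiv 0\pmod{p^\beta}$ for every $p$; since $c_1\ne c_2$ there is a prime $p$ with $\alpha<\beta$ (or, symmetrically, $\beta<\alpha$), and for that $p$ the congruence forces $p\mid b$, contradicting $(b,p)=1$, whence $\nu_p=0$ and $\nu_{c_1,c_2}(0)=0$. This removes the $u=0$ term, leaving the stated sum over $u\ne 0$.

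I do not anticipate a substantial obstacle here: the argument is entirely elementary. The only points requiring care are getting the Poisson normalisation and the passage $\tfrac{a}{c_1}+\tfrac{b}{c_2}\mapsto\tfrac{\cdot}{[c_1,c_2]}$ exactly right, so that $\nu_{c_1,c_2}$ is genuinely $[c_1,c_2]$-periodic, and tracking in the CRT step the local components of $\chi$ and the units produced by clearing denominators --- none of which affects the modulus estimate. The hypothesis $c_1\ne c_2$ is used solely to kill the term $u=0$.
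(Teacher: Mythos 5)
Your proposal is correct and follows essentially the same route as the paper: Poisson summation (the paper splits $h$ into progressions mod $[c_1,c_2]$ first, you twist and apply Poisson directly, which is the same computation in a different order) reduces everything to the character-weighted count of solutions of $a\,c_2/(c_1,c_2)+b\,c_1/(c_1,c_2)\equiv \pm u \pmod{[c_1,c_2]}$, which you bound by $\varphi((c_1,c_2))$ and show is empty for $u=0$ because $c_1\neq c_2$. Your prime-by-prime count just fills in the detail the paper leaves to the reader (and the sign convention in $\nu_{c_1,c_2}$ is immaterial), so no changes are needed.
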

\begin{proof}
We split the sum into progressions $h\equiv a(\text{mod}\ [c_1,c_2])$ and then apply the Poisson summation formula. In doing so we obtain
\begin{align*}
&\frac{1}{[c_1,c_2]}\sum_{a(\text{mod}\ [c_1,c_2])} S_{\chi}(a,0;c_1)S_{\chi}(a,0;c_2) \sum_{u}e\Big(\frac{-au}{[c_1,c_2]}\Big)\widehat{f}\Big(\frac{u}{[c_1,c_2]}\Big)\\
&\qquad\quad=\sum_{u}\widehat{f}\Big(\frac{u}{[c_1,c_2]}\Big)\nu_{c_1,c_2}(u),
\end{align*}
where 
\begin{align*}
\nu_{c_1,c_2}(u)&=\frac{1}{[c_1,c_2]}\ \sideset{}{^*}\sum_{v(\text{mod}\ c_1)}\ \sideset{}{^*}\sum_{w(\text{mod}\ c_2)}\ \chi(vw)\sum_{a(\text{mod}\ [c_1,c_2])}e\bigg(a\Big(\frac{v}{c_1}+\frac{w}{c_2}-\frac{u}{[c_1,c_2]}\Big)\bigg)\\
&=\sideset{}{^*}\sum_{v(\text{mod}\ c_1)}\ \sideset{}{^*}\sum_{w(\text{mod}\ c_2)}\mathbf{1}_{v[c_1,c_2]/c_1+w[c_1,c_2]/c_2\equiv u(\text{mod}\ [c_1,c_2])}\ \chi(vw).
\end{align*}
Observe that the congruence equation has no solutions unless $(u,[c_1c_2]/(c_1,c_2))=1$ (so, in particular, $u\ne0$ as $c_1\ne c_2$), and in that case the number
of solutions is less than or equal to $\varphi((c_1,c_2))$. This completes the proof of the lemma.
\end{proof}

In order to apply this we first write
\[
W(h)=\frac{1}{agD_2} \int_{M}^{\infty}\int_{agD_2MN/x}^{4agD_2MN/x} J_0 \Big(\frac{4\pi\sqrt{ x(y+1)}}{cq\sqrt{D_{2}}} \Big)J_{1}\Big(\frac{4\pi\sqrt{xy}}{cq\sqrt{D_2}}\Big)f(h) dydx,
\]
where
\[
f(h)=g\Big(\frac{x}{h},\frac{yh}{agD_2}\Big),
\]
if $h>0$, and a a similar representation holds for $h<0$. Integrating by parts twice we see that the Fourier transform of $f$ is bounded by
\[
\widehat{f}(u)\ll \big(1+|u|xM^{-1}\big)^{-2}xM^{-1}.
\]
Hence by Lemma \ref{orthoRam} we obtain
\[
\sum_{h> 0}S_{\psi_1}(h,0;cq)S_{\psi_1}(h,0;\ell)f(h)\ll cq\ell.
\]
Moreover we have
\[
\int_{M}^{\infty}\int_{agD_2MN/x}^{4agD_2MN/x} \bigg|J_0 \Big(\frac{4\pi\sqrt{ x(y+1)}}{cq\sqrt{D_{2}}} \Big)J_{1}\Big(\frac{4\pi\sqrt{xy}}{cq\sqrt{D_2}}\Big)\bigg| dydx\ll \frac{(agD_2MN)^{3/2}}{cq\sqrt{D_2}}(\log q),
\]
by using the bounds $J_0(x)\ll (1+x)^{-1/2}$ and $J_1(x)\ll x$. Thus
\[
\sum_{h> 0}S_{\psi_1}(h,0;cq)S_{\psi_1}(h,0;\ell)W(h)\ll (ag)^{1/2}(MN)^{3/2}\ell(\log q).
\]

The same bound holds for the sum over $h<0$. Putting this into  $\mathcal{M}_{2;k}^{OOD}$ we obtain
\begin{align*}
\mathcal{M}_{2;k}^{OOD}&\ll_\eps \frac{L(1,\psi)^2}{q^{2-\eps}} \frac{MN}{C\sqrt{D}}X^2\ll_\eps q^{-1/6+\eps}D^{11/6}X^2,
\end{align*}
by \eqref{valueofC}. This together with \eqref{firstestimate}, \eqref{500}, \eqref{501} and \eqref{502} complete the proof of Proposition \ref{propE_k}.

\section{Proofs of the main theorems}\label{sec:proof of main thm}

As mentioned in the introduction, Theorem \ref{Maintheorem} is a straightforward consequence of Theorem \ref{Maintheorem2}. Here we give the easy proof.
\begin{proof}[Proof of Theorem \ref{Maintheorem} assuming Theorem \ref{Maintheorem2}]
We may assume $L(1,\psi) \leq (\log D)^{-50}$. By the work of Gross and Zagier [\textbf{\ref{GZ}}], the product
\[
\prod_{\substack{f\in S_2^*(q)\\r_f=1}}L(f,s)
\]
is the $L$-function of a
quotient of $J_0(q)$ with rank exactly equal to its dimension. Hence the lower bound
\begin{align*}
\textup{rank}(J_0(q)) \geq \left(\frac{1}{2} + O\Big(\frac{\log\log\log q}{\log\log q} \Big) \right) \textup{dim}(J_0(q))
\end{align*}
follows from Theorem \ref{Maintheorem2}, since the proportion of odd forms is $1/2 + O(q^{-\varepsilon})$.

For the upper bound, it follows from Theorem \ref{Maintheorem2} that it suffices to prove
\begin{align*}
\frac{1}{|S_2^*(q)|}\sum_{\substack{f \in S_2^*(q) \\ r_f \geq 3}} r_f \ll \sqrt{\frac{\log\log\log q}{\log\log q}} .
\end{align*}
By Cauchy-Schwarz's inequality the left-hand side is
\begin{align*}
&\leq \frac{1}{|S_2^*(q)|}\bigg(\sum_{\substack{f \in S_2^*(q) \\ r_f \geq 3}} 1 \bigg)^{1/2}\bigg(\sum_{\substack{f \in S_2^*(q)}} r_f^2 \bigg)^{1/2}\ll q^{-1/2}\bigg(\sum_{\substack{f \in S_2^*(q) \\ r_f \geq 3}} 1 \bigg)^{1/2},
\end{align*}
by Theorem 8.1 in [\textbf{\ref{KMV1}}].
By Theorem \ref{Maintheorem2} again, this is
\begin{align*}
\ll \sqrt{\frac{\log\log\log q}{\log\log q}},
\end{align*}
as required.
\end{proof}

As stated above, Theorem \ref{Maintheorem} is a result about the natural average of analytic ranks of $L$-functions. We first prove a result for the harmonic average, and then describe the necessary modifications to obtain Theorem \ref{Maintheorem}.

\begin{theorem}\label{thm:harmonic average thm}
Let $C \geq 750$ be a fixed real number. Let $D$ be large, and let $\psi$ be a real, odd, primitive Dirichlet character modulo $D$.Then for any $\varepsilon > 0$ and any prime $q$ satisfying
\begin{align*}
D^{750} \leq q \leq D^{C}
\end{align*}
we have 
\begin{align*}
\sideset{}{^h}\sum_{\substack{f \in S_2^*(q) \\ r_f \leq 1}}1 = 1 + O_\varepsilon( L(1,\psi)(\log q)^{45+\varepsilon}) + O_\varepsilon( L(1,\psi)^2(\log q)^{56+\varepsilon}) + O_A((\log q)^{-A})
\end{align*}
if $\psi(q) = 1$, and
\begin{align*}
\sideset{}{^h}\sum_{\substack{f \in S_2^*(q) \\ r_f \leq 2}}1 &= 1+ O \left(\frac{\log \log (1/L(1,\psi)\log D)}{\log (1/L(1,\psi)\log D)} \right) + O_\varepsilon( L(1,\psi)(\log q)^{45+\varepsilon})\\
&\qquad\qquad + O_\varepsilon( L(1,\psi)^2(\log q)^{56+\varepsilon}) 
\end{align*}
if $\psi(q) = -1$ and $L(1,\psi)(\log D)=o(1)$.
\end{theorem}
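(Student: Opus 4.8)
The plan is to run a standard mollified-moments / Cauchy–Schwarz argument, combining the first-moment estimates of Proposition \ref{mainprop1} with the second-moment estimates of Propositions \ref{mainprop2}--\ref{mainprop3}, and then to convert ``$\Lambda_{f,\psi}^{(k)}(1/2)\neq 0$'' into ``$r_f\leq k$''. We treat the two cases $\psi(q)=\pm 1$ separately, since the relevant derivative is the first derivative when $\psi(q)=1$ and the second derivative when $\psi(q)=-1$ (this is dictated by the sign $-\psi(q)$ in the functional equation \eqref{fe} and by the vanishing factors $(1+(-1)^{k+1}\psi(q))$ appearing throughout). In either case, fix the relevant $k\in\{1,2\}$ and let $\mathcal N_k$ denote the number of $f\in S_2^*(q)$ (weighted harmonically) with $\Lambda_{f,\psi}^{(k)}(1/2)\neq 0$. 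By Cauchy–Schwarz,
\begin{align*}
\Big(\sideset{}{^h}\sum_{f\in S_2^*(q)}\Lambda_{f,\psi}^{(k)}\Big(\tfrac12\Big)M_{f,\psi}\Big)^2 \leq \mathcal N_k \cdot \sideset{}{^h}\sum_{f\in S_2^*(q)}\Lambda_{f,\psi}^{(k)}\Big(\tfrac12\Big)^2 M_{f,\psi}^2,
\end{align*}
so $\mathcal N_k \geq (\text{first moment})^2/(\text{second moment})$.

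Next I would insert the asymptotics. Take $X = q^{\eta}$ for a small fixed $\eta>0$ chosen so that all the error terms in Propositions \ref{mainprop1}--\ref{mainprop3} that are powers of $q$ (e.g.\ $q^{-1/2+\varepsilon}DX$, $q^{-1/12+\varepsilon}D^{5/3}X^{5/2}$, $q^{-1/4+\varepsilon}D^{17/4}X^{19/4}$) are $O_A((\log q)^{-A})$; here the hypotheses $D^{750}\leq q\leq D^C$ and $X\geq D^{24}$, $D^8X^6\ll q^{1-\varepsilon}$ are exactly what make such a choice of $\eta$ possible, and the constant $750$ is presumably the threshold needed for all constraints to be simultaneously satisfiable. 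With $\psi(q)=1$ and $k=1$: the main term of the first moment is $\gg \mathfrak S_1 L'(1,\psi)$, the main term of the second moment is $\ll \mathfrak S_2 L'(1,\psi)^2 = \mathfrak S_1^2 L'(1,\psi)^2$, so the ratio gives $\mathcal N_1 \geq (1+o(1))\sideset{}{^h}\sum_{f}1$ up to the stated error terms; here I would use $L'(1,\psi)\asymp \sum_{n\leq D^2}(1\star\psi)(n)/n \geq 1$ (nonnegativity, Lemma \ref{lem:deriv expressions}) to ensure the main term is genuinely of size $\gg \mathfrak S_1$ and absorb the relative error $O_\varepsilon(L(1,\psi)(\log q)^{\text{const}}/(\mathfrak S_1 L'(1,\psi)))$ into $O_\varepsilon(L(1,\psi)(\log q)^{45+\varepsilon})$ after squaring and dividing — tracking the exponents ($7$ from Prop.\ \ref{mainprop1}, $37$ from Prop.\ \ref{mainprop2}, and bounding $1/\mathfrak S_1$ trivially) to land on $45$ and $56$. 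For $\psi(q)=-1$, $k=2$: the extra factor $(1+O(\log\log(1/L(1,\psi)\log D)/\log(1/L(1,\psi)\log D)))$ in Propositions \ref{mainprop1} and \ref{mainprop3}, together with the fact that the quantity $L''(1,\psi)+(\log Q-2\gamma)L'(1,\psi)$ appears squared in the second moment and linearly in the first, produces exactly the term $O(\log\log(1/L(1,\psi)\log D)/\log(1/L(1,\psi)\log D))$ in the statement; the hypothesis $L(1,\psi)\log D = o(1)$ is what makes this error term meaningful (i.e.\ $<1$).

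Finally I would relate $\mathcal N_k$ to ranks. From the functional equation \eqref{fe1}, $\Lambda(f,1/2+s) = \epsilon_f \Lambda(f,1/2-s)$, so if $f$ is even then $r_f$ is even and if $f$ is odd then $r_f$ is odd; combined with the observation that $\Lambda_{f,\psi}(1/2+s) = \Lambda(f,1/2+s)\Lambda(f\otimes\psi,1/2+s)$ vanishes to order $r_f + r_{f\otimes\psi}$ at $s=0$, and that $\psi$ odd forces a definite parity on this total order via the sign $-\psi(q)$ in \eqref{fe}. The key point: when $\psi(q)=1$, \eqref{fe} has sign $-1$, so $\Lambda_{f,\psi}$ is odd and $\Lambda_{f,\psi}'(1/2)\neq 0$ forces the order of vanishing to be exactly $1$, hence $r_f + r_{f\otimes\psi} = 1$, which (both being nonnegative) forces $r_f \leq 1$. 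When $\psi(q)=-1$, \eqref{fe} has sign $+1$, $\Lambda_{f,\psi}$ is even, and $\Lambda_{f,\psi}''(1/2)\neq 0$ forces the order of vanishing to be $\leq 2$, hence $r_f \leq r_f + r_{f\otimes\psi}\leq 2$. Thus $\mathcal N_k \leq \sideset{}{^h}\sum_{r_f\leq k}1 \leq \sideset{}{^h}\sum_f 1 = 1+O(q^{-3/2})$, and the lower bound on $\mathcal N_k$ gives the theorem.

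The main obstacle I anticipate is the bookkeeping of the power-of-$\log q$ error terms: one must verify that after squaring the first moment and dividing by the second moment, the accumulated errors (including the trivial bound $1/\mathfrak S_1 \ll (\log q)^{O(1)}$, which needs to be made explicit — $\mathfrak S_1 \gg 1/\log q$ or so by Mertens-type estimates on the Euler product \eqref{singularseries1}) combine to exactly the exponents $45$ and $56$ claimed, and that the secondary main term $O(\log\log(1/L(1,\psi)\log D)/\log(1/L(1,\psi)\log D))$ in the $\psi(q)=-1$ case does not get amplified when passing through Cauchy–Schwarz. The analytic input (the moment asymptotics) is already done; what remains is careful, but routine, inequality-chasing and the elementary parity argument relating nonvanishing of completed-$L$-function derivatives to $r_f$.
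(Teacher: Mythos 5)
Your proposal is correct and follows the paper's strategy: Cauchy--Schwarz applied to the mollified moments of $\Lambda_{f,\psi}^{(k)}(1/2)$ (with $k=1$ when $\psi(q)=1$ and $k=2$ when $\psi(q)=-1$), inputting Propositions \ref{mainprop1}--\ref{mainprop3}, and then converting nonvanishing of the $k$-th derivative into $r_f\le k$. Two points differ from the paper. First, your conversion step is simpler: you observe that the order of vanishing of $\Lambda_{f,\psi}$ at the centre equals $r_f+r_{f\otimes\psi}$ (the completing factors are nonzero at $s=0$), so $\Lambda_{f,\psi}^{(k)}(1/2)\neq 0$ forces $r_f\le k$ by nonnegativity; the paper instead expands $\Lambda_{f,\psi}''$ by the product rule and does a root-number case analysis, which is longer but yields the sharper conclusion that odd $f$ with $\Lambda_{f,\psi}''(1/2)\neq 0$ have $r_f=1$ exactly (a refinement that feeds Gross--Zagier in the deduction of Theorem \ref{Maintheorem}, though it is not needed for the present statement). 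Second, the paper simply takes $X=D^{24}$, for which all power-of-$q$ error terms are trivially negligible once $q\ge D^{750}$; your choice $X=q^\eta$ does work, but only in the very narrow window $72/2250\le\eta<73/2250$ forced by $X\ge D^{24}$ against $D^{5/3}X^{5/2}\ll q^{1/12-\delta}$, a nonemptiness check you assert rather than verify --- taking $X=D^{24}$ avoids this entirely. Your exponent bookkeeping is at the same level of detail as the paper's; for the record, the input you flag is $\mathfrak{S}_1\gg_\varepsilon(\log q)^{-4-\varepsilon}$ (split primes contribute local factors $1-4/p+O(1/p^2)$ and the $p\mid D$ factors contribute $\gg 1/\log\log D$), and since the main terms are $\gg\mathfrak{S}_1$ (resp.\ $\gg\mathfrak{S}_1\log q$ in the $k=2$ case), dividing the second-moment errors $(\log q)^{37+\varepsilon}$, resp.\ $(\log q)^{39+\varepsilon}$, by $\mathfrak{S}_2$ times the squared main factor lands exactly on the stated $(\log q)^{45+\varepsilon}$, with the $L(1,\psi)^2(\log q)^{56+\varepsilon}$ term comfortably absorbing the cross and squared first-moment errors.
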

\begin{proof}
We prove the second part of the theorem, since the first is similar and easier. Define
\begin{align*}
S_1 &= \sideset{}{^h}\sum_{f\in S_2^*(q)}\Lambda_{f,\psi}''\Big(\frac{1}{2}\Big)M_{f,\psi}
\end{align*}
and
\begin{align*}
S_2 &= \sideset{}{^h}\sum_{f\in S_2^*(q)}\Lambda_{f,\psi}''\Big(\frac{1}{2}\Big)^2M_{f,\psi}^2.
\end{align*}
By Cauchy-Schwarz's inequality, we have
\begin{align*}
\sideset{}{^h}\sum_{\substack{f\in S_2^*(q) \\ \Lambda_{f,\psi}''(1/2) \neq 0}}1 \geq \frac{S_1^2}{S_2}.
\end{align*}
Letting $X=D^{24}$, together Propositions \ref{mainprop1} and \ref{mainprop3} imply that
\begin{align*}
\sideset{}{^h}\sum_{\substack{f\in S_2^*(q) \\ \Lambda_{f,\psi}''(1/2) \neq 0}}1  &=  1 + O \left(\frac{\log \log (1/L(1,\psi)\log D)}{\log (1/L(1,\psi)\log D)} \right)+ O_\varepsilon( L(1,\psi)(\log q)^{45+\varepsilon})\\
&\qquad\qquad+O_\varepsilon( L(1,\psi)^2(\log q)^{56+\varepsilon}).
\end{align*}

By the product rule we see that
\begin{align*}
\Lambda_{f,\psi}''\Big(\frac{1}{2}\Big) &= \Lambda''\Big(f,\frac{1}{2}\Big)\Lambda\Big(f \otimes \psi,\frac{1}{2}\Big)+2\Lambda'\Big(f,\frac{1}{2}\Big)\Lambda'\Big(f \otimes \psi,\frac{1}{2}\Big)+\Lambda\Big(f,\frac{1}{2}\Big)\Lambda''\Big(f \otimes \psi,\frac{1}{2}\Big).
\end{align*}
If $f$ is odd then by root number considerations we see that
\begin{align*}
\Lambda_{f,\psi}''\Big(\frac{1}{2}\Big) &= 2\Lambda'\Big(f,\frac{1}{2}\Big)\Lambda'\Big(f \otimes \psi,\frac{1}{2}\Big).
\end{align*}
If the left-hand side is nonzero then we must have $\Lambda'(f,1/2) \neq 0$. Again by root number considerations we see that $\Lambda'(f,1/2)' = L'(f,1/2)$, so $r_f=1$.

If $f$ is even, then
\begin{align*}
\Lambda_{f,\psi}''\Big(\frac{1}{2}\Big) &=\Lambda''\Big(f,\frac{1}{2}\Big)\Lambda\Big(f \otimes \psi,\frac{1}{2}\Big)+\Lambda\Big(f,\frac{1}{2}\Big)\Lambda''\Big(f \otimes \psi,\frac{1}{2}\Big),
\end{align*}
and therefore at least one of $\Lambda(f,1/2)$ or $\Lambda''(f,1/2)$ is nonzero. If $\Lambda(f,1/2) \neq 0$ then $r_f=0$, so suppose that $\Lambda(f,1/2) = 0$ and $\Lambda''(f,1/2) \neq 0$. Taking derivatives and noting that $L(f,1/2)=L'(f,1/2)=0$, we deduce that $\Lambda''(f,1/2) = L''(f,1/2)$, and therefore $r_f = 2$.
\end{proof}

To deduce Theorem \ref{Maintheorem2} we just need to remove the harmonic weights $w_f$ in Propositions \ref{mainprop1}--\ref{mainprop3} so that the main propositions also hold for the natural average,
\begin{equation*}
\sideset{}{^n}\sum_{f\in S_2^*(q)} A_f:=\frac{1}{|S_{2}^{*}(q)|}\sum_{f\in S_{2}^{*}(q)}A_f.
\end{equation*}
This technique has been done several times (see, for instance, [\textbf{\ref{KM}}, \textbf{\ref{KMV}}, \textbf{\ref{KMV1}}]), so here we shall only illustrate the method for the mollified first moment of $\Lambda_{f,\psi}'(1/2)$,
\begin{equation*}
\sideset{}{^n}\sum_{f\in S_2^*(q)}\Lambda_{f,\psi}'\Big(\frac{1}{2}\Big)M_{f,\psi}.
\end{equation*}
In doing this we need the following general lemma from [\textbf{\ref{KM}}].

\begin{lemma}
Let $(A_f)_{f\in S_{2}^{*}(q)}$ be a family of complex numbers satisfying
\begin{equation}\label{crit1}
 \sideset{}{^h}\sum_{f\in S_2^*(q)} |A_f|\ll (\log q)^A
 \end{equation} and
\begin{equation}\label{crit2}
\max_{f\in S_{2}^{*}(q)}w_f|A_f|\ll q^{-\delta}
\end{equation}
for some absolute $A,\delta>0$.
Then for all $\kappa>0$, there exists $\eps=\eps(A,\delta)>0$ such that
\begin{eqnarray*}
\sideset{}{^n}\sum_{f\in S_2^*(q)} A_f=\frac{1}{\zeta(2)}\,\sideset{}{^h}\sum_{f\in S_2^*(q)} w_f(q^\kappa)A_f+O\big(q^{-\eps}\big),
\end{eqnarray*}
where
\begin{equation*}
w_f(q^\kappa)=\sum_{\ell m^2\leq q^\kappa}\frac{\lambda_f(\ell^2)}{\ell m^2}.
\end{equation*}
\end{lemma}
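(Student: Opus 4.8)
The plan is to reduce the identity to a \emph{pointwise} comparison — valid for each $f\in S_2^*(q)$ individually — between the harmonic weight $\omega_f$ and the Dirichlet polynomial $\tfrac{1}{\zeta(2)}w_f(q^\kappa)$, after which the two hypotheses \eqref{crit1} and \eqref{crit2} do the rest. Two classical inputs underlie this. First, unfolding the Petersson inner product against an Eisenstein series (the Rankin--Selberg method; see [\textbf{\ref{ILS}}]) yields, for $q$ prime and $f\in S_2^*(q)$, an exact identity
\begin{align*}
\frac{1}{\omega_f}=4\pi\langle f,f\rangle=c_0\,q\,L(\mathrm{sym}^2 f,1)\big(1+O(q^{-1})\big),
\end{align*}
where $c_0>0$ is an absolute constant and the $O(q^{-1})$ accounts for the ramified Euler factor at $q$ (harmless since $\lambda_f(q)\ll q^{-1/2}$). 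Combined with $|S_2^*(q)|=\tfrac{q}{12}+O(1)$ (the genus of $X_0(q)$), and with $c_0$ the constant that normalises things correctly, this reads
\begin{align*}
\frac{1}{\omega_f\,|S_2^*(q)|}=\frac{L(\mathrm{sym}^2 f,1)}{\zeta(2)}\big(1+O(q^{-1})\big).
\end{align*}
Second, the Hecke relations give the Dirichlet series identity $\sum_{n\ge1}\lambda_f(n^2)n^{-s}=L(\mathrm{sym}^2 f,s)/\zeta_q(2s)$, where $\zeta_q$ is $\zeta$ with the Euler factor at $q$ removed; separating off a factor $\sum_m m^{-2}=\zeta(2)$ this gives
\begin{align*}
w_f(\infty):=\sum_{\ell,m\ge1}\frac{\lambda_f(\ell^2)}{\ell m^2}=\frac{L(\mathrm{sym}^2 f,1)}{1-q^{-2}}=L(\mathrm{sym}^2 f,1)\big(1+O(q^{-2})\big),
\end{align*}
the series being conditionally convergent, with $w_f(q^\kappa)$ its truncation to $\ell m^2\le q^\kappa$ (terms with $q\mid\ell$ are negligible).

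Granting these, I would argue as follows. Writing $\sideset{}{^n}\sum_f A_f=\sum_f\omega_f A_f\,(\omega_f|S_2^*(q)|)^{-1}$ and substituting the displays above,
\begin{align*}
\sideset{}{^n}\sum_{f\in S_2^*(q)}A_f=\frac{1}{\zeta(2)}\sum_f\omega_f A_f\,w_f(\infty)+\mathcal E_1,
\end{align*}
where the relative errors $O(q^{-1}),O(q^{-2})$ together with the convexity bound $L(\mathrm{sym}^2 f,1)\ll_\varepsilon q^\varepsilon$ (so $w_f(\infty)\ll q^\varepsilon$) and \eqref{crit1} give $\mathcal E_1\ll_\varepsilon q^{-1+\varepsilon}(\log q)^A$; alternatively \eqref{crit2} bounds the contribution of each individual $f$ and removes even this mild use of convexity. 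It then remains to replace $w_f(\infty)$ by $w_f(q^\kappa)$: the discrepancy is $\mathcal E_2=\tfrac{1}{\zeta(2)}\sum_f\omega_f A_f\big(w_f(\infty)-w_f(q^\kappa)\big)$, and since $(A_f)$ is an arbitrary family one has no choice but to estimate the tail $w_f(\infty)-w_f(q^\kappa)$ \emph{pointwise}. Once a bound $w_f(\infty)-w_f(q^\kappa)\ll q^{-\delta_0}$ is available, \eqref{crit1} gives $\mathcal E_2\ll q^{-\delta_0}(\log q)^A$, and on recalling $\sideset{}{^h}\sum_f w_f(q^\kappa)A_f=\sum_f\omega_f w_f(q^\kappa)A_f$ the lemma follows with $\varepsilon=\varepsilon(A,\delta,\kappa)>0$.

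The pointwise tail estimate is the one substantial point, and I would obtain it by a contour shift. For $q$ large $f$ is not of CM type, so $\mathrm{sym}^2 f$ is a cuspidal $\mathrm{GL}(3)$ representation of conductor $q^2$ and $L(\mathrm{sym}^2 f,s)$ is entire. Expressing $w_f(q^\kappa)=\sum_{n\le q^\kappa}b_f(n)$, with $b_f$ the coefficients of $L(\mathrm{sym}^2 f,s)/(1-q^{-2s})$, through Perron's formula truncated at height $T=q^{\theta}$ ($\theta>0$ small, truncation error $\ll q^{\varepsilon-\theta}$), and then moving the contour to $\mathrm{Re}(s)=-\sigma$ with $0<\sigma<\tfrac12$ fixed, one crosses a simple pole at $s=0$ of residue precisely $w_f(\infty)$. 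The shifted vertical integral and the horizontal connectors are bounded, via the convexity estimate $L(\mathrm{sym}^2 f,1-\sigma+it)\ll_\varepsilon q^{\sigma+\varepsilon}(1+|t|)^{3\sigma/2+\varepsilon}$ and $|1-q^{-2(1+s)}|^{-1}\ll1$, by $\ll q^{(1-\kappa)\sigma+\varepsilon}T^{3\sigma/2+\varepsilon}$; a suitable choice of $\sigma$ and $\theta$ then yields $w_f(\infty)-w_f(q^\kappa)\ll q^{-\delta_0}$. The main obstacle is exactly this balancing: the sharp cutoff furnishes no decay in $t$ inside the Perron integral, so the truncation height $T$, the shift depth $\sigma$, and the power saving $q^{-\kappa\sigma}$ coming from the shift must be traded off against one another. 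This is immediate when $\kappa$ exceeds an absolute constant — which suffices for the applications here, since $\kappa$ can be taken large — while for all $\kappa>0$ one appeals instead to a standard bound beyond convexity for $L(\mathrm{sym}^2 f,\cdot)$ near the edge of the critical strip in the $q$-aspect. Everything else is routine bookkeeping with the hypotheses \eqref{crit1} and \eqref{crit2}.
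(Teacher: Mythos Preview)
The paper does not give a proof; it simply quotes the lemma from Kowalski--Michel [\textbf{\ref{KM}}]. Your overall architecture---Rankin--Selberg to write $(\omega_f|S_2^*(q)|)^{-1}=\zeta(2)^{-1}L(\mathrm{sym}^2 f,1)(1+O(q^{-1}))$, identify $L(\mathrm{sym}^2 f,1)$ with $w_f(\infty)$ up to a harmless Euler factor, then truncate to $w_f(q^\kappa)$ by a contour shift---is the standard one, and for $\kappa$ larger than an absolute constant your convexity argument for the tail is correct. Since, as you note, $\kappa$ may be taken arbitrarily large in every application in this paper, your argument suffices for what the paper actually uses.

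Where the sketch falls short of the lemma \emph{as stated} (all $\kappa>0$) is the sentence ``since $(A_f)$ is an arbitrary family one has no choice but to estimate the tail pointwise.'' The family is not arbitrary: it satisfies \eqref{crit2}, and this hypothesis---which you barely use---is precisely what covers small $\kappa$. Your proposed fix, an appeal to ``a standard bound beyond convexity for $L(\mathrm{sym}^2 f,\cdot)$ near the edge in the $q$-aspect,'' is not justified: no such subconvexity is standard, and the Hoffstein--Lockhart/Goldfeld--Hoffstein--Lieman input [\textbf{\ref{GHL}}] gives only a zero-free region of width $\asymp 1/\log q$, hence at best $|w_f(\infty)-w_f(q^\kappa)|\ll_B(\log q)^{-B}$ pointwise, not a power saving. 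In [\textbf{\ref{KM}}] the power saving for small $\kappa$ comes instead from combining \eqref{crit1} and \eqref{crit2}: the tail $R_f=w_f(\infty)-w_f(q^\kappa)$ is small \emph{on average} over $f$ (via the Petersson formula), so one splits into ``typical'' $f$ with $|R_f|$ small---handled by \eqref{crit1}---and the few ``exceptional'' $f$---handled by the pointwise bound $\omega_f|A_f|\ll q^{-\delta}$ from \eqref{crit2}. That interplay, not a pointwise tail bound, is the missing ingredient.
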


We shall apply this lemma for $A_f=\Lambda_{f,\psi}'(1/2)M_{f,\psi}$. Condition \eqref{crit1} follows immediately from Proposition \ref{mainprop2} and Cauchy-Schwarz's inequality. For condition \eqref{crit2}, it is known that $w_f\ll \log q/q$ [\textbf{\ref{GHL}}]. Hence \eqref{crit2} is satisfied provided that $DX\ll q^{1-\delta}$ for some $\delta>0$, using the convexity bound $\Lambda_{f,\psi}'(1/2)\ll_\eps (qD)^{1/2+\varepsilon}$ and the trivial bound $M_{f,\psi}\ll_\eps X^{1/2+\varepsilon}$. Thus we are left with estimating the sum
\begin{eqnarray*}
I=\frac{1}{\zeta(2)}\,\sideset{}{^h}\sum_{f\in S_2^*(q)} w_f(q^\kappa)\Lambda_{f,\psi}'\Big(\frac{1}{2}\Big)M_{f,\psi}.
\end{eqnarray*}

As before, we can remove the condition $(d,q)=1$ in the expression for $\Lambda_{f,\psi}'(1/2)$ in Lemma \ref{afe} due to the decay of the function $V_1$. Using \eqref{mollifier} and applying Lemma \ref{ortho} we get
\[
\Lambda_{f,\psi}'\Big(\frac{1}{2}\Big)M_{f,\psi}=\big(1+\psi(q)\big)\sum_{\substack{d\\ab\leq X}}\frac{\psi(d)\rho_1(ab)}{d\sqrt{a}b}\sum_{n}\frac{(1\star\psi)(bn)\lambda_f(an)}{\sqrt{n}}V_{1}\Big(\frac{d^2bn}{Q}\Big).
\]
Hence
\[
I=\frac{1+\psi(q)}{\zeta(2)}\sum_{\ell m^2\leq q^\kappa}\sum_{\substack{d\\ab\leq X}}\frac{\psi(d)\rho_1(ab)}{d\sqrt{a}b}\sum_{n}\frac{(1\star\psi)(bn)}{\ell m^2\sqrt{n}}V_{1}\Big(\frac{d^2bn}{Q}\Big)\,\sideset{}{^h}\sum_{f\in S_2^*(q)}\lambda_f(\ell^2)\lambda_f(an).
\]
From Lemma \ref{Petersson} we obtain
\begin{align*}
I=\frac{1+\psi(q)}{\zeta(2)}\sum_{\ell m^2\leq q^\kappa}\sum_{\substack{d\\ab\leq X\\ an=\ell^2}}\frac{\psi(d)\rho_1(ab)(1\star\psi)(bn)}{dabm^2n}V_{1}\Big(\frac{d^2bn}{Q}\Big)+O_\varepsilon\big(q^{-1/2+\kappa+\varepsilon}DX\big).
\end{align*}
By \eqref{formulaV} we can write the main term as
\begin{align*}
&\frac{1+\psi(q)}{\zeta(2)}\frac{1}{2\pi i}\int_{(1)}\Gamma(1+u)^2Q^uL(1+2u,\psi)\mathcal{S}(u)\frac{du}{u^{2}},
\end{align*}
where
\begin{align*}
\mathcal{S}(u)=\sum_{\ell m^2\leq q^\kappa}\sum_{\substack{ab\leq X\\ an=\ell^2}}\frac{\rho_1(ab)(1\star\psi)(bn)}{ab^{1+u}m^2n^{1+u}}.
\end{align*}

It is clear that $\mathcal{S}(u)\ll_\eps X^{-\text{Re}(u)+\eps}$ and trivially we have $\mathcal{S}'(0)\ll_\eps(\log q)^{5+\eps}$. We move the contour to $\text{Re}(u)=-1/2+\varepsilon$, crossing a double pole at $u=0$, and the new integral is 
\[
\ll_\eps q^{\eps}\Big(\frac{Q}{X}\Big)^{-1/2}D^{1/2}\ll_\eps q^{-1/2+\eps}X^{1/2}.
\]
 The contribution of the residue is
\[
\frac{2\big(1+\psi(q)\big)}{\zeta(2)}\mathcal{S}(0)L'(1,\psi)+O_\eps\big(L(1,\psi)(\log q)^{5+\eps}\big).
\]
Thus it remains to prove the following lemma.

\begin{lemma}
We have
\begin{align*}
\mathcal{S}(0)&=\zeta(2)\mathfrak{S}_1+O_\eps\big(L(1,\psi)(\log q)^{11+\eps}\big)+O_A\big((\log q)^{-A}\big),
\end{align*}
where $\mathfrak{S}_1$ is given by \eqref{singularseries1}.
\end{lemma}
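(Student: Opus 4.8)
The plan is to adapt the proof of Lemma~\ref{lemmaA1} almost verbatim, the three genuinely new features being the extra summation variable $m$ (which will produce the factor $\zeta(2)$), the coupling of $a$ and $n$ through the square condition $an=\ell^{2}$, and the presence of both $a$ and $b$ inside $\rho_{1}$. First I would carry out the summation over $m$. Since $\ell=\sqrt{an}$ and the constraint is $\ell m^{2}\le q^{\kappa}$, the set $\{n:an=\square\}$ is $\{a^{\flat}k^{2}:k\ge1\}$ with $a^{\flat}$ the squarefree kernel of $a$, so $\sum_{an=\square}n^{-3/4}$ converges; writing $\sum_{m\le\sqrt{q^{\kappa}/\ell}}m^{-2}=\zeta(2)+O\big((an)^{1/4}q^{-\kappa/2}\big)$ and estimating the resulting tail trivially (using $\rho_{1}(ab)\ll_{\eps}(\log q)^{\eps}\tau(ab)$ and $X=D^{24}\le q^{24/750}$) shows, for our fixed $\kappa$, that
\[
\mathcal{S}(0)=\zeta(2)\,\mathcal{S}^{\star}(0)+O_{\eps}\big(q^{-\eps}\big),\qquad \mathcal{S}^{\star}(0):=\sum_{ab\le X}\frac{\rho_{1}(ab)}{ab}\sum_{\substack{n\ge1\\ an=\square}}\frac{(1\star\psi)(bn)}{n}.
\]
It therefore suffices to prove $\mathcal{S}^{\star}(0)=\mathfrak{S}_{1}+O_{\eps}(L(1,\psi)(\log q)^{11+\eps})+O_{A}((\log q)^{-A})$.

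Next I would compare $\mathcal{S}^{\star}(0)$ with the Euler product
\[
\mathcal{P}^{\star}:=\prod_{p\le X}\Bigg(\sum_{\substack{\alpha,\beta,\nu\ge0\\ \alpha+\nu\ \mathrm{even}}}\frac{\rho_{1}(p^{\alpha+\beta})(1\star\psi)(p^{\beta+\nu})}{p^{\alpha+\beta+\nu}}\Bigg)=\sum_{\substack{p\mid abn\,\Rightarrow\, p\le X\\ an=\square}}\frac{\rho_{1}(ab)(1\star\psi)(bn)}{abn}.
\]
Exactly as in Lemma~\ref{lemmaA1}, a Rankin trick with $\delta=1/\log X$ and $T=\exp((\log X)(\log\log X)^{2})$ truncates $a,b,n$ to be $\le T$ at the cost of $O_{A}((\log q)^{-A})$, using $\rho_{1}\ll(\log q)^{\eps}$ on prime powers together with $\prod_{p\le X}(1+O(p^{-1+\delta}))\ll(\log q)^{O(1)}$. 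Then I would invoke the lacunarity of $(1\star\psi)(bn)$, via Lemma~\ref{lem:updated lacunarity lemma}, to shrink the ranges of $b$ and $n$ back to $\le X$ and to impose $ab\le X$: whenever $bn$ or $ab$ exceeds $X$ some relevant variable exceeds a fixed power of $X$, and after separating its prime factors according to $\psi(p)\in\{-1,0,1\}$ --- using, as in the treatment of the term $n_{-}$ in Lemma~\ref{lemmaB1}, that $(1\star\psi)$ vanishes unless the part built from primes with $\psi(p)=-1$ is a perfect square --- one applies Lemma~\ref{lem:updated lacunarity lemma} to the squarefree part, producing the $O_{\eps}(L(1,\psi)(\log q)^{11+\eps})$ error. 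Hence $\mathcal{S}^{\star}(0)=\mathcal{P}^{\star}$ up to an acceptable error.

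Finally I would evaluate the local factor of $\mathcal{P}^{\star}$ at each $p\le X$, which (using that $\rho_{1}$ is supported on cube-free integers, so the inner sums are finite) splits into the three cases $\psi(p)=0,-1,1$ just as at the end of the proof of Lemma~\ref{lemmaA1}. A direct computation shows the local factor equals $(1+1/p)^{-1}$ when $p\mid D$ and $\big((1+1/p)^{2}+\psi(p)/p\big)^{-1}(1-\psi(p)/p)$ otherwise; in particular the $(1-p^{-2})^{-1}$ denominators created by the parity constraint on $\nu$ cancel, so $\mathcal{P}^{\star}=\mathfrak{S}_{1}$ and the factor $\zeta(2)$ is produced entirely by the $m$-sum. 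Combining the three steps gives $\mathcal{S}(0)=\zeta(2)\mathfrak{S}_{1}$ up to the stated errors. I expect the lacunarity step to be the main obstacle: one must peel off the correct divisor functions from $\rho_{1}(ab)$ and $(1\star\psi)(bn)$ so that Lemma~\ref{lem:updated lacunarity lemma} applies with a controlled power of $\tau$, and check that the square condition $an=\square$ together with $ab\le X$ forces any oversized variable either into the scope of lacunarity or into a sparse set contributing $O(X^{-c})$; the local computation, while tedious, is routine.
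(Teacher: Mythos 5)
Your proposal is correct and follows the same strategy as the paper: extract $\zeta(2)$ from the $m$-sum with a power-saving error, compare the remaining sum (carrying the parity condition $an=\square$) with the Euler product over $p\le X$, truncate via Rankin's trick with $T=\exp((\log X)(\log\log X)^2)$, reconcile the ranges using Lemma \ref{lem:updated lacunarity lemma}, and evaluate the three local factors exactly as you describe. The one point where your implementation differs is the source of the lacunarity when $ab>X$: the paper takes it directly from $\rho_1(ab)$, bounding $(1\star\psi)(bn)\le\tau(b)\tau(n)$ trivially, writing $ab=cd^2$ with $c$ squarefree so that $|\rho_1(ab)|\ll_\eps(\log q)^\eps(1\star\psi)(c)$, and treating $d>X^{1/4}$ as a sparse near-square set; your route instead pushes the squarefree kernel of an oversized $a$ into $n$ through $an=\square$ and extracts lacunarity from $(1\star\psi)(bn)$ with a Lemma \ref{lemmaB1}-style case analysis at primes with $\psi(p)=-1$. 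Both yield $O_\eps(L(1,\psi)(\log q)^{11+\eps})$, but the paper's bookkeeping avoids the entanglement of $a,b,n$, and it removes the smoothness condition on $n$ (rather than restricting $n\le X$) by the simple remark that a prime $p>X$ dividing $n$ must divide it to the second power since $a\le X$ and $an$ is a square. Also note that the paper separates $m\le q^\eps$ from $m>q^\eps$ instead of completing the $m$-sum term by term; your tail bound of shape $X^{1/4}(\log q)^{O(1)}q^{-\kappa/2}$ is fine, but you should say explicitly that $\kappa$ is chosen as a fixed constant large enough (any $\kappa>2/125$ suffices here, and such a choice is compatible with the error $q^{-1/2+\kappa+\eps}DX$).
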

\begin{proof}
The sum on $m$ in $\mathcal{S}(0)$ should obviously yield the factor of $\zeta(2)$ out in front. To execute this sum rigorously, we first separate $\ell$ and $m$ from each other. By trivial estimation, the contribution from $m > q^\varepsilon$ is
\begin{align*}
&\ll_\eps (\log q)^\varepsilon \sum_{m > q^\varepsilon }\frac{1}{m^2} \sum_{\substack{a,b,n \leq q}}\frac{\tau(ab)\tau(bn)}{abn} \ll_\eps q^{-\varepsilon} \bigg( \sum_{a \leq q} \frac{\tau(a)}{a}\bigg)^2\bigg( \sum_{b \leq q} \frac{\tau(b)^2}{b}\bigg)\ll_\eps q^{-\varepsilon}.
\end{align*}

With the size of $m$ reduced, we now wish to extend the sum on $\ell$ to infinity. Of course, the sum on $\ell$ is not really infinite because we have the condition $an = \ell^2$, but it is useful to think in these terms. The contribution from $\ell > q^\kappa/m^2 \geq q^{\kappa-2\varepsilon}$ is
\begin{align*}
&\ll_\eps (\log q)^\varepsilon \sum_{b \leq X}\frac{\tau(b)^2}{b}\sum_{\ell > q^{\kappa - 2\varepsilon}}\frac{1}{\ell^2}\sum_{an = \ell^2} \tau(a){\tau(n)}\ll_\eps (\log q)^{4+\varepsilon}\sum_{\ell > q^{\kappa - 2\varepsilon}}\frac{\tau(\ell^2)^3}{\ell^2}\ll_\eps q^{-\varepsilon}.
\end{align*}
We therefore have
\begin{align*}
\mathcal{S}(0) &= \sum_{m \leq q^{\varepsilon}}\frac{1}{m^2}\sum_{\substack{ab \leq X}}\frac{\rho_1(ab)(1\star\psi)(bn)}{abn}\sum_{\substack{\ell \geq 1 \\ \ell^2 = an}} 1 + O_\eps(q^{-\varepsilon}) \\
&= \sum_{m \leq q^{\varepsilon}}\frac{1}{m^2}\sum_{\substack{ab \leq X \\ an = \square}}\frac{\rho_1(ab)(1\star\psi)(bn)}{abn}+ O_\eps(q^{-\varepsilon}) \\
&= \zeta(2)\sum_{\substack{ab \leq X \\ an = \square}}\frac{\rho_1(ab)(1\star\psi)(bn)}{abn}+ O_\eps(q^{-\varepsilon}).
\end{align*}

We wish to compare the triple sum over $a,b$, and $n$ with the Euler product
\begin{align*}
\mathcal{P}_0 &:= \prod_{p \leq X}\sum_{\substack{a,b,n \geq 0 \\ a+n \text{ even}}}\frac{\rho_1(p^{a+b})(1\star \psi)(p^{b+n})}{p^{a+b+n}} = \sum_{\substack{p \mid abn \Rightarrow p\leq X \\ an = \square}}\frac{\rho_1(ab)(1\star\psi)(bn)}{abn}.
\end{align*}
It shall follow from lacunarity that $\mathcal{P}_0$ is a good approximation to $\mathcal{S}(0)$.

We use a crude smooth number estimate, as in the proof of Lemma \ref{lemmaB1}, to first restrict the size of $ab$. If we set $T = \exp ((\log X)(\log \log X)^2)$, then one finds, as in the proof of Lemma \ref{lemmaB1}, that the contribution to $\mathcal{P}_0$ from $ab > T$ is $O_A( (\log q)^{-A})$. It follows that
\begin{align*}
\mathcal{P}_0 &= \sum_{\substack{p \mid abn \Rightarrow p\leq X \\ an = \square \\ ab \leq T}}\frac{\rho_1(ab)(1\star\psi)(bn)}{abn} + O_A((\log q)^{-A}).
\end{align*}

The next step is to use lacunarity to show that the error from restricting to $ab \leq X$ is negligible. We recall here that $\rho_1$ is supported on cube-free integers, and that
\begin{align*}
\rho_1(p) &= h(p)\alpha(p)\mu(p)(1\star \psi)(p), \\
\rho_1(p^2) &= h(p)\alpha(p)^2\psi(p).
\end{align*}
The contribution to $\mathcal{P}_0$ from $ab > X$ is
\begin{align*}
&\leq \sum_{\substack{p \mid abn \Rightarrow p\leq X \\ X <ab \leq T}}\frac{|\rho_1(ab)|\tau(b)\tau(n)}{abn} \ll (\log q)^2\sum_{\substack{p \mid t \Rightarrow p \leq X \\ X < t \leq T}}\frac{|\rho_1(t)|\tau(t)^2}{t}.
\end{align*}
Since $\rho_1$ is supported on cube-free integers we can factor $t = cd^2$, where $c,d$ are squarefree and $(c,d)=1$. Therefore this last sum is
\begin{align*}
&\ll (\log q)^2 \sum_{X < cd^2 \leq T} \frac{h(c)h(d)\alpha(c)\alpha(d)^2(1\star\psi)(c)\tau(c)^2\tau(d^2)^2}{cd^2} \\
&\ll_\eps (\log q)^{2+\varepsilon}\sum_{X < cd^2 \leq T} \frac{(1\star\psi)(c)\tau(c)^2\tau(d^2)^2}{cd^2}.
\end{align*}
Since $cd^2 > X$ we either have $c > X^{1/2}$ or $d > X^{1/4}$. The contribution from $d > X^{1/4}$ is
\begin{align*}
&\ll_\eps(\log q)^{2+\varepsilon} \sum_{d > X^{1/4}}\frac{\tau(d^2)^2}{d^2}\sum_{c \leq T} \frac{\tau(c)^3}{c}\ll_\eps(\log q)^{10+\varepsilon} \sum_{d > X^{1/4}}\frac{\tau(d^2)^2}{d^2}\ll_\eps X^{-1/4+\varepsilon}.
\end{align*}
To estimate the contribution from $c > X^{1/2}$ we use Lemma \ref{lem:updated lacunarity lemma}. It follows that the contribution from $c > X^{1/2}$ is
\begin{align*}
&\ll_\eps(\log q)^{2+\varepsilon}\sum_{X^{1/2} < c \leq T}\frac{(1\star\psi)(c)\tau(c)^2}{c} \ll_{\varepsilon,A}L(1,\psi)(\log q)^{11+\varepsilon} + (\log q)^{-A}.
\end{align*}
We have therefore shown that
\begin{align*}
\mathcal{P}_0 &= \sum_{\substack{ab\leq X \\ p\mid n \Rightarrow p \leq X\\ an = \square}}\frac{\rho_1(ab)(1\star\psi)(bn)}{abn} + O_\varepsilon(L(1,\psi)(\log q)^{11+\varepsilon}) +O_A((\log q)^{-A}).
\end{align*}

The sum here is almost equal to the triple sum in $\mathcal{S}(0)$, save for the condition on primes dividing $n$. However, if there exists a prime $p > X$ dividing $n$ then we must have $p^2 \mid n$, since $an$ is a square and $a\leq X$. An easy argument then shows that
\begin{align*}
\mathcal{P}_0 &=\sum_{\substack{ab \leq X \\ an = \square}}\frac{\rho_1(ab)(1\star\psi)(bn)}{abn}+ O_\varepsilon(L(1,\psi)(\log q)^{11+\varepsilon}) +O_A((\log q)^{-A}),
\end{align*}
and therefore
\begin{align*}
\mathcal{S}(0) &= \zeta(2)\prod_{p \leq X}\sum_{\substack{a,b,n \geq 0 \\ a+n \text{ even}}}\frac{\rho_1(p^{a+b})(1\star \psi)(p^{b+n})}{p^{a+b+n}}+ O_\varepsilon(L(1,\psi)(\log q)^{11+\varepsilon}) +O_A((\log q)^{-A}).
\end{align*}

We now follow the familiar procedure of considering the local factors depending on the value of $\psi(p)$. If $\psi(p) = 0$, i.e. if $p \mid D$, then
\begin{align*}
\sum_{\substack{a,b,n \geq 0 \\ a+n \text{ even}}}\frac{\rho_1(p^{a+b})(1\star \psi)(p^{b+n})}{p^{a+b+n}}& = \sum_{\substack{a,b,n \geq 0 \\ a+n \text{ even}}}\frac{\alpha(p^{a+b})\mu(p^{a+b})}{p^{a+b+n}} \\
&=\sum_{n \text{ even}}\frac{1}{p^n} + \frac{\alpha(p)\mu(p)}{p}\sum_{n \geq 1 \text{\,odd}}\frac{1}{p^n} + \frac{\alpha(p)\mu(p)}{p}\sum_{n \text{ even}}\frac{1}{p^n} \\
&= \frac{p^2}{p^2-1}-\frac{\alpha(p)}{p^2-1}-\alpha(p) \frac{p}{p^2-1}= \left(1 + \frac{1}{p}\right)^{-1}.
\end{align*}

We next examine the local factor when $\psi(p) = -1$. In this case we have
\begin{align*}
&\sum_{\substack{a,b,n \geq 0 \\ a+n \text{ even}}}\frac{\rho_1(p^{a+b})(1\star \psi)(p^{b+n})}{p^{a+b+n}} =\sum_{\substack{a,b,n \geq 0 \\ a+n \text{ even} \\ b+n \text{ even} \\ a+b \in \{0,2\}}}\frac{h(p^{a+b})\alpha^2(p^{a+b})\psi(p^{(a+b)/2})}{p^{a+b+n}}\\
&\qquad\qquad=\sum_{n \text{ even}}\frac{1}{p^n} -\frac{h(p)\alpha(p)^2}{p^2}\sum_{n \geq 1 \text{\,odd}}\frac{1}{p^n}-\frac{2h(p)\alpha(p)^2}{p^2}\sum_{n \text{ even}}\frac{1}{p^n}\\
&\qquad\qquad= \frac{p^2}{p^2-1} -\frac{h(p)\alpha(p)^2}{p(p^2-1)}-\frac{2h(p)\alpha(p)^2}{p^2-1} = \frac{p^2+p}{p^2+p+1}\\
&\qquad\qquad= \left(\left(1 + \frac{1}{p} \right)^2 + \frac{\psi(p)}{p} \right)^{-1}\left(1 - \frac{\psi(p)}{p} \right).
\end{align*}

Lastly, we examine the local factor when $\psi(p) = 1$:
\begin{align*}
&\sum_{\substack{a,b,n \geq 0 \\ a+n \text{ even}}}\frac{\rho_1(p^{a+b})(1\star \psi)(p^{b+n})}{p^{a+b+n}} = \sum_{\substack{a,b,n \geq 0 \\ a+n \text{ even}}}\frac{\rho_1(p^{a+b})(b+n+1)}{p^{a+b+n}}\\
&\qquad=\sum_{n \text{ even}}\frac{n+1}{p^n}-\frac{2h(p)\alpha(p)}{p}\sum_{n \geq 1 \text{\,odd}}\frac{n+1}{p^n}-\frac{2h(p)\alpha(p)}{p}\sum_{n\text{ even}}\frac{n+2}{p^n}\\
&\qquad\qquad+\frac{h(p)\alpha(p)^2}{p^2}\sum_{n \geq 1 \text{\,odd}}\frac{n+2}{p^n}+\frac{h(p)\alpha(p)^2}{p^2}\sum_{n \text{ even}}\frac{n+1}{p^n}+\frac{h(p)\alpha(p)^2}{p^2}\sum_{n \text{ even}}\frac{n+3}{p^n}\\
&\qquad=\frac{p^2(p^2+1)}{(p^2-1)^2}- \frac{4h(p)\alpha(p)p^2}{(p^2-1)^2}-\frac{4h(p)\alpha(p)p^3}{(p^2-1)^2} +\frac{h(p)\alpha(p)^2(3p^2-1)}{p(p^2-1)^2}\\
&\qquad\qquad+\frac{h(p)\alpha(p)^2(p^2+1)}{(p^2-1)^2}+\frac{h(p)\alpha(p)^2(3p^2-1)}{(p^2-1)^2}\\
&\qquad=\frac{p^2-p}{p^2+3p+1} = \left(\left(1 + \frac{1}{p} \right)^2 + \frac{\psi(p)}{p} \right)^{-1}\left(1 - \frac{\psi(p)}{p} \right).
\end{align*}

We see that the local factors match in each case with those in $\mathfrak{S}_1$, and this completes the proof.
\end{proof}

The argument for the second moment is similar, but the convexity bound is no longer sufficient to obtain \eqref{crit2}. Instead, we use the subconvexity bound
\begin{align*}
L\Big(f,\frac{1}{2} \Big)\ll_\varepsilon q^{5/24 + \vartheta/12+\varepsilon}
\end{align*}
due to Blomer and Khan [\textbf{\ref{BK}}; Theorem 4]. Here $\vartheta$ is an admissible bound towards the Ramanujan conjecture, and it is known that $\vartheta \leq 7/64$ is admissible (see [\textbf{\ref{BB}}] for more discussion). This yields the subconvexity bound
\begin{align*}
L\Big(f,\frac{1}{2} \Big)\ll_\varepsilon q^{167/768+\varepsilon}.
\end{align*}
For the central value $L(f \otimes \psi, 1/2)$ we use the convexity bound
\begin{align*}
L\Big(f \otimes \psi, \frac{1}{2}\Big) \ll_\varepsilon (qD^2)^{1/4+\varepsilon}.
\end{align*}
Recalling that we choose $X$ in the definition of the mollifier so that $X=D^{24}$, we see that \eqref{crit2} is satisfied if $q > D^{380}$.

\end{document}